\theoremstyle{plain}
\newtheorem{theorem}{Theorem}[section]
\newtheorem{lemma}[theorem]{Lemma}
\theoremstyle{definition}
\newtheorem{definition}[theorem]{Definition}
\newtheorem{remark}[theorem]{Remark}
\newtheorem{example}[theorem]{Example}
\numberwithin{equation}{section}
\newcommand{\linspan}{\mathop{\rm span}\nolimits}
\newcommand{\rest}{\left.\kern-2\nulldelimiterspace\right|_}
\newcommand{\norm}[2]{\left|#1\right|_{#2}}
\newcommand{\dnorm}[2]{\left\|#1\right\|_{#2}}
\newcommand{\vol}{\mathop{\rm vol}\nolimits}
\newcommand{\Id}{{\mathbf1}}
\newcommand{\indf}{1}
\newcommand{\p}{\partial}
\newcommand*{\Bigcdot}{\raisebox{-.25ex}{\scalebox{1.25}{$\cdot$}}}
\newcommand{\clC}{{\mathcal C}}
\newcommand{\clH}{{\mathcal H}}
\newcommand{\clK}{{\mathcal K}}
\newcommand{\clL}{{\mathcal L}}
\newcommand{\clS}{{\mathcal S}}
\newcommand{\clT}{{\mathcal T}}
\newcommand{\clU}{{\mathcal U}}
\newcommand{\clZ}{{\mathcal Z}}
\newcommand{\bbN}{{\mathbb N}}
\newcommand{\bbR}{{\mathbb R}}
\newcommand{\bfK}{{\mathbf K}}
\newcommand{\fkP}{{\mathfrak P}}
\newcommand{\rmD}{{\mathrm D}}
\newcommand{\bfn}{{\mathbf n}}
\newcommand{\rma}{{\mathrm a}}
\newcommand{\rmd}{{\mathrm d}}
\newcommand{\rme}{{\mathrm e}}
\newcommand{\ovlineC}[1]{\overline C_{\left[#1\right]}}
\definecolor{DarkBlue}{rgb}{0,0.08,0.45}
\definecolor{DarkRed}{rgb}{.65,0,0}
\definecolor{applegreen}{rgb}{0.55, 0.71, 0.0}
\newcounter{mymac@matlab}
\newcommand{\matlab}{MATLAB%
   \ifnum\value{mymac@matlab}<1%
   \textregistered%
   \setcounter{mymac@matlab}{1}%
   \fi%
  }
\newcommand{\black}{ \color{black} }
\begin{document}
\title{Saturated feedback stabilizability to trajectories for the Schl\"{o}gl parabolic equation}
\author{Behzad Azmi$^{\tt1}$,\quad Karl Kunisch$^{\tt1,2}$ ,\quad S\'ergio S.~Rodrigues$^{\tt1,2}$ }
\thanks{
\vspace{-1em}\newline\noindent
{\sc MSC2020}: 93C20, 93D15, 93B45, 35K58.%\quad\hfill\today.\quad
\newline\noindent
{\sc Keywords}: Saturated feedback controls, control constraints, stabilizability to trajectories, semilinear parabolic equations, finite-dimensional control, receding horizon control
\newline\noindent
$^{\tt1}$ Johann Radon Institute for Computational and Applied Mathematics,
  \"OAW, Altenbergerstr. 69, 4040 Linz, Austria.
\newline\noindent
$^{\tt2}$ Institute for Mathematics and Scientific Computing, 621, Heinrichstrasse 36, 8010 Graz, Austria.
\newline\noindent
{\sc Emails}:
{\small\tt behzad.azmi@ricam.oeaw.ac.at,\quad karl.kunisch@uni-graz.at,
\newline sergio.rodrigues@ricam.oeaw.ac.at}
% $^{*}$ Corresponding author.
 }

\begin{abstract}
It is shown that there exist a finite number of indicator functions, which allow us to track an arbitrary given trajectory of the Schl\"ogl model, by means of an explicit saturated feedback control input whose magnitude is bounded by a constant independent of the given targeted trajectory. Simulations are presented showing the stabilizing performance of the explicit feedback constrained control. Further, such performance is compared to that of a receding horizon constrained control minimizing the classical energy cost functional.
\end{abstract}

\maketitle

\pagestyle{myheadings} \thispagestyle{plain} \markboth{\sc B. Azmi, K. Kunisch,  S. S. Rodrigues}
{\sc Saturated feedback stabilizability to trajectories for the Schl\"ogl equation}
% % \def\@thmcountersep{.}
% %\let\@mathematic=true,
% \let\@envcntsect

% \vspace*{-3em}
% {%\tiny%
%\footnotesize%\scriptsize
% \tableofcontents
% }

\section{Introduction}
\label{sec:introduction}
We investigate the controlled Schl\"{o}gl system, for time~$t\ge0$,
\begin{subequations}\label{sys-y-u-Cu}
 \begin{align}
& \tfrac{\p}{\p t} y -\nu\Delta y +(y-\zeta_1)(y-\zeta_2)(y-\zeta_3)=h+U_M^\diamond u,\label{sys-y-u}\\
&y(0)=y_0\in W^{1,2}(\Omega),\qquad \tfrac{\p}{\p\bfn}y\rest{\p\Omega}=0,\label{y-icH1}
\intertext{where~$U_M^\diamond u\coloneqq{\textstyle\sum\limits_{i=1}^{M_\sigma}u_i(t)\indf_{\omega^M_{i}}}$, under control constraints as}
&\dnorm{u(t)}{}\le C_u,\label{normu<Cu}
\end{align}
\end{subequations}
evolving in the Hilbert Sobolev space~$W^{1,2}(\Omega)$, where~$\Omega\subset\bbR^d$ is a bounded rectangular domain, with~$d\in\{1,2,3\}$. Further~$M$ and~$M_\sigma$  are positive integers, $U_M\coloneqq\{\indf_{\omega^M_{i}}\mid 1\le i\le M_\sigma\}\subseteq L^2(\Omega)$ is a given family of $M_\sigma$~actuators, which are indicator functions of open subdomains~$\omega^M_{i}\subseteq\Omega$ depending on the index~$M$. Finally, $\nu>0$, $(\zeta_1,\zeta_2,\zeta_3)\in\bbR^3$, ~$h\in L^2_{\rm loc}(\bbR_+,L^2(\Omega))$ is a given external force, and~$u=u(t)=(u_i(t),u_2(t),\dots,u_{M_\sigma}(t))$ is a vector of scalar controls (tuning parameters) at our disposal.
 We are particularly interested in the
case where the control~$u=u(t)$ is subject to constraints as~\eqref{normu<Cu} for an apriori given constant~$C_u\in[0,+\infty]$ and an apriori given norm~$\dnorm{\Bigcdot}{}$ in~$\bbR^{M_\sigma}$. The usual Euclidean norm in~$\bbR^{M_\sigma}$ shall be denoted by~$\norm{\Bigcdot}{\bbR^{M_\sigma}}$. Note that the extremal cases~$C_u=+\infty$ and~$C_u=0$ correspond, respectively, to the unconstrained case and to the  free dynamics case.

Our family of actuators~$U_M$ can be chosen so that the total volume covered by the actuators satisfies~$\vol(\bigcup_{i=1}^{M_\sigma}\omega^M_{i})=r\vol(\Omega)$, with an arbitrary apriori given~$r\in(0,1)$.

System~\eqref{sys-y-u-Cu} is a model for chemical reactions for non-
equilibrium phase transitions;  see~\cite[sect.~4]{Schloegl72} and~\cite{GugatTroeltzsch15}. Also,
when coupled with a suitable ordinary differential equation,
it gives rise to models in neurology and electrophysiology,
namely, to the FitzHugh–Nagumo-like equations ~\cite{NagumoAriYosh62,FitzHugh61,KunRod19-dcds}. It is also an interesting model from the mathematical point
of view. Indeed, the cubic nonlinearity with~$\zeta_1<\zeta_2<\zeta_3$
and vanishing~$(f,u)$, determine the two stable
equilibria~$\zeta_1$ and~$\zeta_3$, and the unstable equilibrium~$\zeta_2$. This
leads to interesting asymptotic behavior of the solutions
and, as we shall see, it lends itself to a nontrivial analysis of
the global saturated feedback control mechanism.

The main stabilizability problem under investigation is as follows. We are given a trajectory/solution~$\widehat y$ of the free dynamics, that is, we assume that~$\widehat y$ solves
\begin{subequations}\label{sys-haty}
 \begin{align}
& \tfrac{\p}{\p t} \widehat y -\nu\Delta +(\widehat y-\zeta_1)(\widehat y-\zeta_2)(\widehat y-\zeta_3)=h,\\
&\widehat y(0,\Bigcdot)=\widehat y_0\in W^{1,2}(\Omega),\qquad \tfrac{\p}{\p\bfn}\widehat y\rest{\p\Omega}=0,
\end{align}
\end{subequations}
 and that~$\widehat y$ has a desired behavior, which we would like to track.

Next, we are also given another initial state~$y_0\in W^{1,2}(\Omega)$. It turns out that the corresponding solution~$y$ of the free dynamics, with~$y(0,\Bigcdot)=y_0\in W^{1,2}(\Omega)$ may present an asymptotic behavior different from the targeted behavior of~$\widehat y$. For example, in the case~$h=0$, and~$\zeta_1<\zeta_2<\zeta_3$ we could think of the free dynamics equilibrium
$\widehat y(t,x)=\zeta_2$, with initial state~$\widehat y(0,x)=\widehat y_0(x)=\zeta_2$, as our desired targeted behavior. We can see that~$\widehat y(t,x)=\zeta_2$ is not a stable equilibrium, and that if~$y_0(x)\coloneqq c\ne\zeta_2$ is a constant initial state, then the state~$y(t,x)$, of the free dynamics solution, corresponding to the initial state~$y(0,x)=c$, does not converge to the targeted $\widehat y(t,x)$.

Hence, to track a desired trajectory~$\widehat y$ we (may) need to apply a control.
Our goal is  to design the control input~$u$ such that the state~$y(t,\Bigcdot)$ of the solution of system~\eqref{sys-y-u-Cu} converges exponentially to the targeted state~$\widehat y(t,\Bigcdot)$ as time increases,
\begin{equation}\label{goal-exp}
\norm{y(t)-\widehat y(t)}{L^2(\Omega)}^2\le \rme^{-\mu (t-s)}\norm{y(s)-\widehat y(s)}{L^2(\Omega)}^2,
\end{equation}
for all $t\ge s\ge0$,
for a suitable constant~$\mu>0$.

We shall construct the stabilizing constrained control~$u$ by saturating a suitable unconstrained stabilizing linear feedback control~$\clK (y-\widehat y)$, with~$\clK\colon W^{1,2}(\Omega)\to\bbR^{M_\sigma}$, through a radial projection as follows
\begin{subequations}\label{rad.proj}
\begin{align}
&u=\overline\clK(y-\widehat y)\coloneqq \fkP^{\dnorm{\Bigcdot}{}}_{C_u}(\clK (y-\widehat y)),
&\intertext{where}
&\fkP^{\dnorm{\Bigcdot}{}}_{C_u}(v)\coloneqq\begin{cases}v,&\mbox{ if }\dnorm{v}{}\le C_u,\\
\frac{C_u}{\dnorm{v}{}}v,&\mbox{ if }\dnorm{v}{}> C_u,
\end{cases}\qquad v\in\bbR^{M_\sigma}.
\end{align}
\end{subequations}
Note that we have, for~$v\ne0$
\begin{equation}\label{rad.proj.min}
\fkP^{\dnorm{\Bigcdot}{}}_{C_u}(0)=0\quad\mbox{and}\quad \fkP^{\dnorm{\Bigcdot}{}}_{C_u}(v)=\min\left\{1,\tfrac{C_u}{\dnorm{v}{}}\right\}v,
\end{equation}
and also that, for all $(v,C_u)\in\bbR^{M_\sigma}\times[0,+\infty]$,
\[\dnorm{\fkP^{\dnorm{\Bigcdot}{}}_{C_u}(v)}{}\le C_u,\qquad \fkP^{\dnorm{\Bigcdot}{}}_{0}(v)=0,\quad\mbox{and}\quad\fkP^{\dnorm{\Bigcdot}{}}_{+\infty}(v)=v.
\]
In particular, the saturated feedback control~$u(t)= \overline\clK(y(t)-\widehat y(t))$ satisfies~$\dnorm{u(t)}{}\le C_u$.

The stabilizability of dynamical systems as~\eqref{sys-y-u} is an important problem for applications, even in the case where the ``magnitude''~$\dnorm{u(t)}{}$ of the control is allowed to take arbitrary large values (i.e., in the case $C_u=+\infty$), as shown by the amount of contributions we can find in the literature.

In applications we may be faced with physical constraints, for example, with an upper bound for the magnitude of the acceleration/forcing provided by an engine, or with an upper bound for the temperature provided by a heat radiator. For this reason it is also important to investigate the case of bounded controls (i.e., the case $C_u<+\infty$).

\begin{remark}\label{R:convexOmega}
We consider rectangular spatial domains for the sake of simplicity of exposition. Analogous results can be obtained for more general convex polygonal domains. We shall revisit this point in Remark~\ref{R:PolyDom}.
\end{remark}

\begin{remark}\label{R:Msigma}
We consider a sequence~$(U_M)_{M\in\bbN_+}$ of families of indicator functions $U_M=\{\indf_{\omega^M_{i}}\mid 1\le i\le M_\sigma\}\subseteq L^2(\Omega)$ with supports~$\overline{\omega^M_{i}}$ depending on the sequence index~$M$. Such dependence on~$M$ is also convenient to be able to consider a sequence of families whose total volume covered by the actuators is fixed apriori, $\vol(\bigcup_{i=1}^{M_\sigma}\omega^M_{i})=r\vol(\Omega)$, with~$r\in(0,1)$ independent of~$M$.
\end{remark}

%%%%%%%%%%%%%%%%%%%%%%
\subsection{Global exponential stabilizability to zero}
Considering the difference to the target,~$z=y-\widehat y$, our goal~\eqref{goal-exp} reads
 \begin{equation}\notag
\norm{z(t)}{L^2(\Omega)}^2\le \rme^{-\mu (t-s)}\norm{z(s)}{L^2(\Omega)}^2,\mbox{ for all } t\ge s\ge\tau\ge0.
\end{equation}
In this way we ``reduce'' the stabilizability to trajectories to the stabilizability to zero.

Let us consider a general controlled dynamical system, with state~$z$ and control~$u$,
\begin{subequations}\label{sys-gen}
\begin{align}
&\tfrac{\rmd}{\rmd t} z(t)=f(t,z(t),u(t)),\qquad z(0)=z_0\in\clZ,\qquad t\ge0,\\
&u(t)\in\bbR^{M_\sigma},\qquad \dnorm{u(t)}{\bbR^{M_\sigma}}\le C_u,\label{sys-gen-u}
\end{align}
\end{subequations}
evolving in a normed space~$\clZ$.
Consider also the free dynamics
\begin{equation}\label{sys-gen-free}
\tfrac{\rmd}{\rmd t} {z(t)}=f(t,z(t),0),\qquad z(0)=z_0\in\clZ,\qquad t\ge0.
\end{equation}

Let~$\clH$ be another normed space with~$\clZ\subseteq\clH$.

\begin{definition}
System~\eqref{sys-gen-free} is globally exponentially stable in the~$\clH$-norm, if there are constants~$\varrho\ge1$ and~$\mu>0$ such that for every initial condition~$z_0\in\clZ$, we have that
$\norm{z(t)}{\clH}\le \varrho\rme^{-\mu (t-s)}\norm{z(s)}{\clH}$, for all $t\ge s\ge0$.
\end{definition}

\begin{definition}
System~\eqref{sys-gen} is globally exponentially stabilizable in the~$\clH$-norm, if there are constants~$\varrho\ge1$ and~$\mu>0$ such that for every initial condition~$z_0\in\clZ$, there
exists $u\in L^2(\bbR_+,\bbR^{M_\sigma})$ satisfying~\eqref{sys-gen-u} such that
$\norm{z(t)}{\clH}\le \varrho\rme^{-\mu (t-s)}\norm{z(s)}{\clH}$, for all $t\ge s\ge0$.
\end{definition}

The class of systems globally stabilizable with constrained controls, $C_u<+\infty$, is strictly smaller  than that of systems  globally stabilizable with unconstrained controls, $ C_u=+\infty$. This can be illustrated with the following system, where~$r$ is a constant, $U_M=\{1\}$, $z(t)\in\bbR$, and our control input is~$u(t)=u_1(t)\in\bbR$, $t\ge0$,
 \begin{align}\label{sys-y-u-ode1}
 \tfrac{\rmd}{\rmd t} z +r z=u_11, \qquad \norm{u}{\bbR}\le C_u.
\end{align}

 \begin{theorem}\label{T:ode1-intro-Cuinfty}\textit{
If~$ C_u=+\infty$, then for arbitrary $r\in\bbR$, system~\eqref{sys-y-u-ode1} is globally exponentially stabilizable.
}
\end{theorem}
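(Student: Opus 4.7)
The plan is to exhibit an explicit linear feedback that achieves exponential decay at any prescribed rate, taking advantage of the fact that no magnitude bound is imposed on the control. Since the system is scalar, with state $z(t)\in\bbR$ and a single scalar control $u_1(t)\in\bbR$ entering through the actuator $1$, the controlled equation is simply $\tfrac{\rmd}{\rmd t}z+rz=u_1$.

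First, I would fix an arbitrary desired decay rate $\mu>0$ and set the linear feedback
\begin{equation*}
u_1(t)=-\kappa\, z(t),\qquad \kappa\coloneqq \mu-r\in\bbR.
\end{equation*}
Plugging this into the state equation yields the closed-loop ODE $\tfrac{\rmd}{\rmd t}z=-(r+\kappa)z=-\mu z$, which integrates explicitly to $z(t)=\rme^{-\mu(t-s)}z(s)$ for all $t\ge s\ge 0$. Hence $\norm{z(t)}{\bbR}= \rme^{-\mu(t-s)}\norm{z(s)}{\bbR}$, and the required exponential estimate holds with $\varrho=1$.

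It remains to verify admissibility of the control. The feedback is $u_1(t)=-\kappa\,\rme^{-\mu(t-s)}z(s)$, hence $u_1\in L^2(\bbR_+,\bbR)$ for every initial state $z_0\in\bbR$, and $\norm{u_1(t)}{\bbR}=\norm{\kappa}{\bbR}\rme^{-\mu t}\norm{z_0}{\bbR}$. Since $C_u=+\infty$, the constraint~\eqref{sys-gen-u} is vacuous, so this $u_1$ qualifies as an admissible stabilizing control regardless of the sign or size of $r$ or $z_0$.

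There is no real obstacle here: the theorem is meant as a benchmark/contrast, since the same construction would fail when $C_u<+\infty$ and $r<0$, as the magnitude of the stabilizing feedback $\kappa\norm{z_0}{\bbR}$ grows with the initial datum and eventually violates any fixed bound. That contrast motivates the saturated feedback analysis carried out for the Schl\"ogl equation in the rest of the paper.
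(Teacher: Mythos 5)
Your proof is correct and follows essentially the same route as the paper: both choose a linear feedback proportional to the state that cancels the $r$-term and enforces the prescribed decay rate $\mu$ (your gain $(r-\mu)$ versus the paper's $(r-2\mu)$, a purely cosmetic difference in normalization). The added remarks on $L^2$-admissibility and the contrast with the constrained case are consistent with the paper's intent.
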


 \begin{theorem}\label{T:ode1-intro}\textit{
If $r<0$, then for an arbitrary given $C_u\in\bbR_+$,
system~\eqref{sys-y-u-ode1} is not globally exponentially stabilizable.
}
\end{theorem}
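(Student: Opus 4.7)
The plan is to argue by contradiction, using that the zero equilibrium of the free dynamics $\dot z = -rz$ is exponentially unstable (with growth rate $|r|=-r>0$) when $r<0$, whereas a bounded control can only compensate for a trajectory-shift of order $C_u/|r|$. Since the system is a scalar linear ODE, every admissible trajectory admits the explicit representation
\[
z(t) = e^{|r|t}z_0+\int_0^t e^{|r|(t-s)}u_1(s)\,\rmd s,
\]
and this will be the only analytic ingredient needed.

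Suppose for contradiction that constants $\varrho\ge1$ and $\mu>0$ realize global exponential stabilizability. I would take an initial state $z_0>C_u/|r|$ (which is finite since $C_u\in\bbR_+$) and, using the admissibility constraint $u_1(s)\ge -C_u$ in the variation-of-constants formula, derive the trajectory-independent lower bound
\[
z(t)\ge (z_0-C_u/|r|)\,e^{|r|t}+C_u/|r|>0,\qquad t\ge0,
\]
valid for \emph{every} admissible control. Comparing this with the assumed decay estimate $|z(t)|\le\varrho\,e^{-\mu t}z_0$ yields
\[
e^{(|r|+\mu)t}\le \frac{\varrho z_0}{z_0-C_u/|r|}\qquad\text{for all }t\ge 0,
\]
which fails as $t\to\infty$; this is the desired contradiction. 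The degenerate case $C_u=0$ is immediate since only $u_1\equiv0$ is admissible and the free trajectory $z(t)=z_0 e^{|r|t}$ already violates any exponential decay estimate for nonzero $z_0$.

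I do not expect any substantive obstacle; the argument rests on a one-dimensional variation-of-constants formula and a one-sided comparison. The point worth underlining is that the constants $\varrho,\mu$ in the definition of global exponential stabilizability must be uniform over initial data, which is precisely what is broken by choosing $z_0$ arbitrarily large: the unstable growth rate $|r|$ of the drift dominates any bounded forcing, and no feedback law subject to $\dnorm{u_1}{\bbR}\le C_u$ can offset a sufficiently large initial excursion.
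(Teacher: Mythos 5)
Your proof is correct, but it follows a different route from the paper's. The paper multiplies the dynamics by $2z$ to obtain the differential inequality $\tfrac{\rmd}{\rmd t}z^2\ge 2\norm{z}{\bbR}(-r\norm{z}{\bbR}-C_u)$, concluding that $z^2$ is strictly increasing whenever $\norm{z(t)}{\bbR}>\tfrac{C_u}{-r}$; hence any initial state beyond that threshold never returns below its initial magnitude, which already contradicts exponential decay. You instead exploit the explicit variation-of-constants representation of the scalar linear ODE together with the one-sided bound $u_1(s)\ge -C_u$, obtaining the quantitative lower bound $z(t)\ge (z_0-C_u/\norm{r}{\bbR})\rme^{\norm{r}{\bbR}t}+C_u/\norm{r}{\bbR}$ and hence genuine exponential \emph{growth} of the trajectory. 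Both arguments isolate the same threshold $C_u/\norm{r}{\bbR}$ beyond which a bounded control cannot offset the unstable drift. Your version yields a sharper conclusion (exponential divergence rather than mere non-decrease) but relies on the explicit solution formula, so it is tied to the linear scalar setting; the paper's energy-inequality argument is the one that generalizes to the nonlinear PDE context used elsewhere in the manuscript. One small remark: the paper defines $\bbR_+=(0,+\infty)$, so your separate treatment of the degenerate case $C_u=0$ is unnecessary, though harmless.
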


The proofs  of Theorems~\ref{T:ode1-intro-Cuinfty} and~\ref{T:ode1-intro} are given in the Appendix.
Theorem~\ref{T:ode1-intro} shows that if the free dynamics of system~\eqref{sys-y-u-ode1} is unstable and if we are given a positive bound~$C_u<+\infty$ for the magnitude of the control, then
we cannot globally exponentially stabilize the system. Therefore, we will need to explore some properties of the free dynamics of~\eqref{sys-y-u-Cu} in order to conclude its global stabilizability to trajectories,  with a prescribed arbitrary exponential decrease rate~$\mu>0$.

%%%%%%%%%%%%%%%%%%%%%%%%%%%
\subsection{The sequence of families of actuators}\label{sS:actuators}
For our rectangular spatial domain~$\Omega\subset\bbR^d$,
\begin{equation}\notag
\Omega=\Omega^\times=(0,L_1)\times (0,L_2)\times \cdots\times (0,L_d),\quad d\in\{1,2,3\},
\end{equation}
we consider the set~$U_M$ of actuators as in
~\cite[sect.~4.8]{KunRod19-cocv} and~\cite[sect.~5]{KunRodWalter21},
\begin{subequations}\label{U_M}
\begin{align}
&U_M\coloneqq\{\indf_{\omega_{j}}\mid 1\le j\le M_\sigma\}\subseteq L^2(\Omega),\\&\clU_M\coloneqq\linspan U_M,\qquad \dim\clU_M=M_\sigma,
\intertext{where, for a fixed~$M$, $M_\sigma=M^d$ and}
 &\omega_j=\omega_j^M\coloneqq{\bigtimes\limits_{n=1}^d}((c_n)_{j}^M-\tfrac{rL_n}{2M},(c_n)_{j}^M+\tfrac{rL_n}{2M}),
\intertext{with set of centers $c=(c)^M_j$}
 &\{(c)^M_j\mid 1\le j\le M_\sigma\}=\!{\bigtimes\limits_{n=1}^d}\!\{\tfrac{(2k-1)L_n}{2M}\mid 1\le k\le M\}.
\end{align}
\end{subequations}
See Fig.~\ref{fig.suppActSens} for an illustration for the case~$d=2$. See also
~\cite[sect.~5.2]{Rod20-eect},
~\cite[sect.~4]{Rod21-sicon}, ~\cite[sect.~6]{Rod21-aut} where an
analogous placement of the actuators/sensors have been used.
%
%%%%%%%%%%%%%%%%%%%%%%%%TIKZ BOXES
\definecolor{lightgray}{gray}{0.75}
\setlength{\unitlength}{.0019\textwidth}
\newsavebox{\Rectfw}%
\savebox{\Rectfw}(0,0){%
\linethickness{2pt}
{\color{black}\polygon(0,0)(120,0)(120,80)(0,80)(0,0)}%
}%
\newsavebox{\RectRef}%
\savebox{\RectRef}(0,0){%
\linethickness{1.5pt}
{\color{lightgray}\polygon*(40,30)(80,30)(80,50)(40,50)(40,30)}%
}%
 \begin{figure}[h!]
\begin{center}
\begin{picture}(500,100)%(0,0)
% Rect1
 \put(0,0){\usebox{\Rectfw}}
\put(0,0){\usebox{\RectRef}}
% Rect2
  \put(190,0){\usebox{\Rectfw}}
   \put(190,0){\scalebox{.5}[.5]{\usebox{\RectRef}}}
    \put(250,0){\scalebox{.5}[.5]{\usebox{\RectRef}}}
   \put(190,40){\scalebox{.5}[.5]{\usebox{\RectRef}}}
  \put(250,40){\scalebox{.5}[.5]{\usebox{\RectRef}}}
% Rect3
 \put(380,0){\usebox{\Rectfw}}
  \put(380,0){\scalebox{.333333}[.333333]{\usebox{\RectRef}}}
 \put(420,0){\scalebox{.333333}[.333333]{\usebox{\RectRef}}}
 \put(460,0){\scalebox{.333333}[.333333]{\usebox{\RectRef}}}
%  %
  \put(380,26.66666){\scalebox{.333333}[.333333]{\usebox{\RectRef}}}
 \put(420,26.66666){\scalebox{.333333}[.333333]{\usebox{\RectRef}}}
 \put(460,26.66666){\scalebox{.333333}[.333333]{\usebox{\RectRef}}}
%  %
  \put(380,53.33333){\scalebox{.333333}[.333333]{\usebox{\RectRef}}}
 \put(420,53.33333){\scalebox{.333333}[.333333]{\usebox{\RectRef}}}
 \put(460,53.33333){\scalebox{.333333}[.333333]{\usebox{\RectRef}}}

 \put(40,85){$M=1$}
 \put(230,85){$M=2$}
 \put(420,85){$M=3$}
\put(60,35){$\omega_1^1$}
\put(222,17){$\omega_1^2$}
\put(282,17){$\omega_2^2$}
\put(222,57){$\omega_3^2$}
\put(282,57){$\omega_4^2$}
\end{picture}
\end{center}
 \caption{Supports of actuators in the rectangle~$\Omega^\times\subset\bbR^2$.} \label{fig.suppActSens}
 \end{figure}
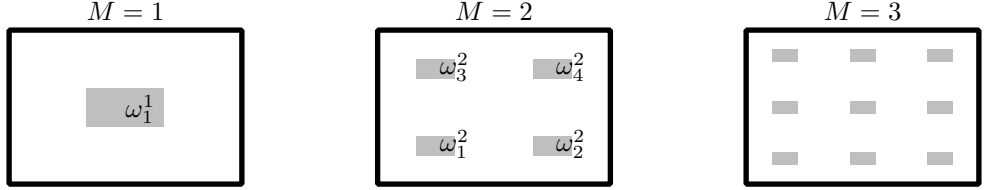

%%%%%%%%%%%%%%%%%%%%%%
\subsection{The main stabilizability result and the RHC framework}
For the (ordered) family~$U_M$ of linearly independent actuators in~\eqref{U_M},
let~$P_{\clU_M}\in\clL(L^2(\Omega),\clU_M)$ be the orthogonal projection in~$L^2(\Omega)$ onto~$\clU_M$. Recall the control operator isomorphism in~\eqref{sys-y-u-Cu},
\(
U_M^\diamond\colon\bbR^{M_\sigma}\to\clU_M,\quad u\mapsto{\textstyle\sum\limits_{i=1}^{M_\sigma}}u_i\indf_{\omega^M_{i}},
\)
and consider the unconstrained explicit feedback control
\[
z\mapsto-\lambda P_{\clU_M} z,\quad\mbox{for a given}\quad\lambda\ge 0,
\]
 where~$z=y-\widehat y$ is the difference to the target~$\widehat y$.
Finally, we consider the saturated feedback control
\[
\overline\clK_M(z)\coloneqq  \fkP^{\dnorm{\Bigcdot}{}}_{C_u}\left(-\lambda (U_{M}^\diamond)^{-1}P_{\clU_M}z\right).
 \]
 In this way, the difference~$z$ will satisfy the system
\begin{subequations}\label{sys-z-intro}
\begin{align}
 &\tfrac{\p}{\p t} z =\nu\Delta z -f^{\widehat y}(z)+U_{M}^\diamond\fkP^{\dnorm{\Bigcdot}{}}_{C_u}\left(-\lambda (U_{M}^\diamond)^{-1}P_{\clU_M}z\right),\\
&z(0,\Bigcdot)=z_0,\quad \tfrac{\p}{\p\bfn} z\rest{\p\Omega}=0,\quad \dnorm{u(t)}{}\le C_u,
 \end{align}
\end{subequations}
with $z_0\in W^{1,2}(\Omega)$ and $f^{\widehat y}(z)=z^3+(3\widehat y+\xi_2) z^2+(3\widehat y^2+2\xi_2\widehat y+\xi_1) z$, for suitable constants~$\xi_1$ and~$\xi_2$.

Shortly, the main stabilizability result of this paper is as follows, whose  precise statement shall be given in Theorem~\ref{T:mainSchloegl}.

\smallskip
\noindent
{\bf Main Result.} 
\textit{For each~$\mu>0$ there exist large enough constants~$M\in\bbN_+$, $\lambda>0$, and ~$C_u\in\bbR_+$,  such that~\eqref{sys-z-intro} is globally  exponential stable, with exponential decrease rate~$\mu$.}

\smallskip
We also consider the infinite-horizon constrained optimal control problem
\begin{subequations}\label{inf-hor-por}
\begin{align}
&\min_{u \in L^2(\bbR_+,\bbR^{M_\sigma})}J_ {\infty}{(u;y_0,\widehat y)}\mbox{ subject to~\eqref{sys-y-u-Cu}},\label{inf-hor-por-subto}\\
&J_ {\infty}{(u;y_0,\widehat y)}\coloneqq\norm{y-\widehat y}{L^2(\bbR_+,{L^2}
)}^2+\beta\norm{u}{L^2(\bbR_+,\bbR^{M_\sigma})}^2,\label{inf-hor-por-minJ}
\end{align}
\end{subequations}
where $\beta>0$  and  the target trajectory $\widehat y$ is given as the solution to~\eqref{sys-haty} for a pair $(\widehat y_0,h)$.  This problem is an infinite-horizon nonlinear time-varying optimal control problem with control constraints.  One efficient  approach  to deal with~\eqref{inf-hor-por} is  the receding horizon control (RHC).  In this approach,  the stabilizing control is obtained by concatenating  a sequence of finite-horizon open-loop controls.   These controls are computed online as the solutions to problems of the following form, for time~$t\in I_{t_0}^T\coloneqq(t_0,t_0+T)$, $T>0$. With
\begin{subequations}
\label{finite-OP-subto}
\begin{align}
\tfrac{\p}{\p t} y -\nu\Delta y +(y-\zeta_i)(y-\zeta_2)(y-\zeta_3)=h+U_M^\diamond u,\\
y(t_0)=\bar{y}_0,\qquad \tfrac{\p}{\p\bfn}y\rest{\p\Omega}=0,\qquad
\dnorm{u(t)}{}\le C_u, 
\end{align}
\end{subequations}
we consider the problem
\begin{subequations}\label{finite-OP}
\begin{align}
\label{finite-OP-minJ}
&\min_{u\in L^2(I_{t_0}^T,\bbR^{M_\sigma})} J_{T}(u;t_0,\bar{y}_0, \widehat{y})
\mbox{ subject to~\eqref{finite-OP-subto}},\\
&J_{T}(u;t_0,\bar{y}_0, \widehat{y})\!\coloneqq\!\norm{y-\widehat y}{L^2(I_{t_0}^T,{L^2}
)}^2\!+\!\beta\norm{u}{L^2(I_{t_0}^T,\bbR^{M_\sigma})}^2\!.\!\!
\end{align}
\end{subequations}
 The receding horizon framework is detailed by the steps of Algorithm~\ref{RHA}.
\begin{algorithm}[htbp]
\caption{RHC($\delta,T$)}\label{RHA}
\begin{algorithmic}[1]
\REQUIRE{sampling time  $\delta>0$, prediction horizon $T> \delta$,  initial state $y_0 \in W^{1,2}(\Omega)$, targeted trajectory $\widehat{y}$ solving~\eqref{sys-haty}}
\ENSURE{ Receding horizon control  ~$u_{rh} \in L^2(\mathbb{R}_+,  \mathbb{R}^{M_{\sigma}})$.}
\STATE Set~$t_0=0$ and~$\bar{y}_0 =y_0$;
\STATE Find~$(y_T^*(\cdot;t_0,\bar{y}_0),u^*_T(\cdot;t_0,\bar{y}_0))$
for time in~$(t_0,t_0+T)$ by solving the  open-loop problem~\eqref{finite-OP};
\STATE For all $\tau \in [t_0,t_0+\delta)$,  set $u_{rh}(\tau)= u_T^*(\tau;t_0,\bar{y}_0)$;
\STATE Update: $\bar{y}_0\leftarrow y_T^*(t_0+\delta;t_0,\bar{y}_0),$;
\STATE Update: $t_0 \leftarrow t_0 +\delta$;
\STATE Go to step $2$;
\end{algorithmic}
\end{algorithm}
The obtained RHC law is not optimal, as long as $T$ is  finite.  But,  relying on the stabilizability stated in Main Result,  we will show (Thm.~\ref{subopth}) that it is stabilizing and suboptimal.  The control constraints  are enforced  within the finite-horizon open-loop problems.

%%%%%%%%%%%%%%%%%%%%%%%%%%%%%%%%%%%%%
\subsection{On previous related works in literature}
The literature is rich in results concerning the feedback stabilizability of parabolic like equations under no constraints in the magnitude of the control.
For example,  we can mention~\cite{BarbuTri04,BarRodShi11,KunRod19-cocv,PhanRod18-mcss,BreKunRod17,AzouaniTiti14,LunasinTiti17}, \cite[sect.~2.2]{Barbu11}, and references therein.
Though we do not address,
in the present manuscript, the case of boundary controls, we would like to mention~\cite{PrieurTrelat19,Deutscher16,BarbuLasTri06,Barbu12,BadTakah11,Raymond19,Rod18,Barbu_TAC13,BalKrstic00,CochranVazquezKrstic06,KrsticMagnVazq09}.
When compared to the unconstrained case, there is (it seems) a smaller amount of works in the literature considering an upper bound~$C_u$ for the magnitude
of the control~$u(t)$. For finite-dimensional systems the literature is still rich, as examples we refer the reader to~\cite{HuLinQiu01,CorradiniCristofaroOrlando10,ZhouLam17,LiuChitourSontag96,Teel92,BarbuChengFreeman97,LauvdalFossen97,SaberiLinTeel96,SussmannSontagYang94,WredenhagenBelanger94}.
For infinite-dimensional systems, the  amount of works in the literature is more modest, for parabolic equations we mention~\cite{MironchenkoPrieurWirth21}, and  for wave-like equations we refer the reader to~\cite{LasieckaSeidman03,SeidmanLi01}. See also~\cite{Slemrod89} with an application to the  beam equation in~\cite[sect.~8.1]{Slemrod89}.

We follow an approach which is common in many works dealing with bounded controls. Namely, we consider the saturation of a given unconstrained stabilizing  feedback~$u(t)=\clK(z(t))$, with~$z=y-\widehat y$. This means that, at every instant of time~$t\ge0$, we simply rescale the given unconstrained feedback~$u(t)$ if its magnitude violates the constraint.
Note that the norm of the (unconstrained) feedback control~$u(t)=\clK(z(t))\in\bbR^{M_\sigma}$ can take arbitrary large values (e.g., for linear~$\clK$ and $\gamma>0$ we have~$\dnorm{\clK(\gamma  z)}{}=\gamma\dnorm{\clK(z)}{}$).
Exponentially stabilizing controls given in linear feedback form~$u=\clK z$ are often demanded in applications, because such controls are able to respond to small measurement errors; see the numerical simulations in~\cite{Rod21-aut,KunRod19-cocv}.

In this work,  we also continue the investigation on  the receding horizon framework initiated in \cite{AzmiKun2019} for the stabilization (to zero) of {\em linear} nonautonomous (time-varying)  systems.  In this framework,  no terminal cost or constraints is needed and,  the stability is obtained by an appropriate concatenation scheme on a sequence of overlapping temporal intervals.   Recall that, \emph{in theory}, stabilizing system ~\eqref{sys-y-u-Cu} to a given time-dependent trajectory~$\widehat y=\widehat y(t)$ is equivalent to stabilizing the nonautonomous error dynamics~\eqref{sys-z-intro} to zero. We adapt the analysis given in \cite{AzmiKun2019}  for~\eqref{inf-hor-por},  with the differences that here, firstly, the dynamics is {\em nonlinear}, secondly, control constraints are imposed and, finally, numerically, instead of stabilizing~\eqref{sys-z-intro} to zero we stabilize the original system~\eqref{sys-y-u-Cu} to the trajectory~$\widehat y$.

\subsection{Contents and notation}
The manuscript is organized as follows. Section~\ref{S:Schloegl} is dedicated to the proof of Main Result. In section~\ref{S:optimalcontrol} we discuss the receding horizon algorithm including the existence of optimal controls for the finite-horizon subproblems.  The results of numerical simulations showing the stabilizing performance of both the explicit saturated feedback and the receding horizon control are discussed in section~\ref{S:simulations}. Finally, the Appendix gathers the proofs of Theorems~\ref{T:ode1-intro-Cuinfty} and~\ref{T:ode1-intro}.

Concerning the notation, we write~$\bbR$ and~$\bbN$ for the sets of real numbers and nonnegative
integers, respectively. We set $\bbR_+\coloneqq(0,+\infty)$
and~$\bbN_+\coloneqq\bbN\setminus\{0\}$.

Given Banach spaces~$X$ and~$Y$, we write $X\xhookrightarrow{} Y$ if the inclusion
$X\subseteq Y$ is continuous.
The space of continuous linear mappings from~$X$ into~$Y$ is denoted by~$\clL(X,Y)$. We
write~$\clL(X)\coloneqq\clL(X,X)$.
The continuous dual of~$X$ is denoted~$X'\coloneqq\clL(X,\bbR)$.
The adjoint of an operator $L\in\clL(X,Y)$ will be denoted $L^*\in\clL(Y',X')$.
The space of continuous functions from~$X$ into~$Y$ is denoted by~$\clC(X,Y)$.

The orthogonal complement to a given subset~$B\subset H$ of a Hilbert space~$H$,
with scalar product~$(\Bigcdot,\Bigcdot)_H$,  is
denoted~$B^{\perp H}\coloneqq\{h\in H\mid (h,s)_H=0\mbox{ for all }s\in B\}$.

Given two closed subspaces~$F\subseteq H$ and~$G\subseteq H$ of the
Hilbert space~$H=F+ G$, with~$F\bigcap G=\{0\}$, we denote by~$P_F^G\in\clL(H,F)$
the oblique projection in~$H$ onto~$F$ along~$G$. That is, writing $h\in H$ as $h=h_F+h_G$
with~$(h_F,h_G)\in F\times G$, we have~$P_F^Gh\coloneqq h_F$.
The orthogonal projection in~$H$ onto~$F$ is denoted by~$P_F\in\clL(H,F)$.
Notice that~$P_F= P_F^{F^{\perp H}}$.

By
$\overline C_{\left[a_1,\dots,a_n\right]}$ we denote a nonnegative function that
increases in each of its nonnegative arguments~$a_i$, $1\le i\le n$.

Finally, $C,\,C_i$, $i=0,\,1,\,\dots$, stand for unessential positive constants.

%%%%%%%%%%%%%%%%%%%%%%%%%%%
%%%%%%%%%%%%%%%%%%%%%%%%%%%
 \section{Exponential stabilizability}\label{S:Schloegl}
We fix the data in the Schl\"ogl system~\eqref{sys-y-u}  as
\begin{equation}\label{data}
\nu>0,\;\; h\in L^2_{\rm loc}(\bbR_+,L^2(\Omega)),\mbox{ and } ( \zeta_1, \zeta_2, \zeta_3)\in\bbR^3.
\end{equation}
We recall the free dynamics
\begin{subequations}\label{Schloegl}
\begin{align}
 &\tfrac{\p}{\p t} y -\nu\Delta y+(y- \zeta_1)(y- \zeta_2)(y- \zeta_3)= h, \\
 &y(0)=y_0\in W^{1,2}(\Omega),\qquad\tfrac{\p}{\p\bfn}y\rest{\p\Omega}=0,
 \end{align}
\end{subequations}
whose solution,  with initial state~$y(0)=y_0$, will be denoted by~$\clS(y_0;t)\coloneqq y(t)$. We show here that a saturated control allows us to track arbitrary solutions of the free-dynamics.
Let us assume that the trajectory~$\widehat y(t)=\clS(\widehat y_0;t)$ has a desired behavior. Our goal is to construct a control which stabilizes the system to this trajectory.
We consider the system
\begin{subequations}\label{Schloegl-feed}
\begin{align}
 &\tfrac{\p}{\p t} y -\nu\Delta y+(y- \zeta_1)(y- \zeta_2)(y- \zeta_3)\notag\\
&\hspace{1em}= h+U_M^\diamond\overline\clK_M(y-\widehat y),\\
&y(0)=y_0\in W^{1,2}(\Omega),\qquad\tfrac{\p}{\p\bfn}\rest{\p\Omega}=0,
\intertext{with the saturated feedback control (cf.~\eqref{sys-z-intro})}
 &\overline\clK_M(y-\widehat y)=  \fkP^{\dnorm{\Bigcdot}{}}_{C_u}\left(-\lambda (U_{M}^\diamond)^{-1}P_{\clU_M}(y-\widehat y)\right).  \label{Behcont}
 \end{align}
\end{subequations}
\black
Let us denote the solution of~\eqref{Schloegl-feed} by~$y(t)\coloneqq\clS_{\rm feed}^{\widehat y}(y_0;t)$.

\begin{theorem}\label{T:mainSchloegl}\textit{
For arbitrary~$\mu>0$, there exists $M_*\in\bbN_+$ such that, for every~$M\ge M_*$ there exists~$\lambda_*>0$ such that, for every~$\lambda>\lambda_*$ there exists~$C_u^*\in\bbR_+$  such that, for all~$C_u>C_u^*$ it holds that: for each~$(\widehat y_0,y_0)\in W^{1,2}(\Omega)\times W^{1,2}(\Omega)$, the solutions $\widehat y(t)\coloneqq\clS(\widehat y_0;t)$ of~\eqref{Schloegl} and $y(t)\coloneqq\clS_{\rm feed}^{\widehat y}(y_0;t)$ of~\eqref{Schloegl-feed} satisfy, for all~$t\ge s\ge0$,
\begin{equation}
\norm{y(t)-\widehat y(t)}{L^2(\Omega)}\le \rme^{-\mu (t-s)}\norm{y(s)-\widehat y(s)}{L^2(\Omega)}.
\label{Tmain.exp}
\end{equation}
Furthermore, $M_*\le\ovlineC{\mu,\norm{\zeta}{\infty}}$, where~$\norm{\zeta}{\infty}\coloneqq\max\limits_{1\le i\le 3}\norm{\zeta_i}{\bbR}$.
}
\end{theorem}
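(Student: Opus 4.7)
My plan is to pass to the error variable $z=y-\widehat y$, which by~\eqref{sys-z-intro} solves a controlled parabolic problem, and to derive the pointwise dissipation inequality $\tfrac{\mathrm d}{\mathrm d t}\norm{z(t)}{L^2(\Omega)}^2\le -2\mu\norm{z(t)}{L^2(\Omega)}^2$, from which~\eqref{Tmain.exp} follows by Gr\"onwall. Testing~\eqref{sys-z-intro} against~$z$ in $L^2(\Omega)$ and integrating by parts using the Neumann condition yields
\[
\tfrac12\tfrac{\mathrm d}{\mathrm d t}\norm{z}{L^2(\Omega)}^2 + \nu\norm{\nabla z}{L^2(\Omega)}^2 + \int_\Omega f^{\widehat y}(z)\,z\,\mathrm d x = \langle U_M^\diamond\overline\clK_M(z),z\rangle_{L^2(\Omega)}.
\]
A direct computation based on~\eqref{rad.proj.min} shows $U_M^\diamond\overline\clK_M(z)=-\tilde\alpha\,\lambda\,P_{\clU_M}z$ with a scalar $\tilde\alpha=\tilde\alpha(t,z)\in(0,1]$, equal to $1$ precisely when the constraint is inactive, so the feedback contributes $-\tilde\alpha\lambda\norm{P_{\clU_M}z}{L^2(\Omega)}^2\le 0$.

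\textbf{Reaction term and spectral inequality.}
Next I would expand $\int f^{\widehat y}(z)z = \int z^4+\int(3\widehat y+\xi_2)z^3+\int(3\widehat y^2+2\xi_2\widehat y+\xi_1)z^2$ and absorb the mixed cubic via Young, $|(3\widehat y+\xi_2)z^3|\le\tfrac12 z^4+\tfrac12(3\widehat y+\xi_2)^2z^2$, to obtain
\[
\int_\Omega f^{\widehat y}(z)\,z\,\mathrm d x\;\ge\;\tfrac12\!\int_\Omega\! z^4\,\mathrm d x - \int_\Omega a(\widehat y)\,z^2\,\mathrm d x,\quad a(\widehat y)\coloneqq\tfrac32\widehat y^2+\xi_2\widehat y+\tfrac12\xi_2^2-\xi_1.
\]
A uniform bound $\norm{a(\widehat y(t))}{L^\infty(\Omega)}\le A$ is then extracted from the parabolic maximum principle and the $L^\infty$-absorbing-set property of the dissipative free dynamics~\eqref{Schloegl}. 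I would then invoke the Poincar\'e-type spectral inequality associated with the indicator family $U_M$ of~\eqref{U_M}, available for precisely this actuator geometry from~\cite{KunRod19-cocv}: there exists $\beta_M>0$ with $\beta_M\to 0$ as $M\to\infty$ such that
\[
\norm{(\indO-P_{\clU_M})z}{L^2(\Omega)}^2\le\beta_M\norm{\nabla z}{L^2(\Omega)}^2,\qquad z\in W^{1,2}(\Omega).
\]
The quantitative behaviour of $\beta_M$ is what yields the bound $M_*\le\ovlineC{\mu,\norm{\zeta}{\infty}}$.

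\textbf{Closing the estimate and the saturated case.}
Combining the above with $\norm{z}{L^2(\Omega)}^2\le\norm{P_{\clU_M}z}{L^2(\Omega)}^2+\beta_M\norm{\nabla z}{L^2(\Omega)}^2$, the energy identity becomes
\[
\tfrac12\tfrac{\mathrm d}{\mathrm d t}\norm{z}{L^2(\Omega)}^2 \le (-\nu+A\beta_M)\norm{\nabla z}{L^2(\Omega)}^2 - \tfrac12\!\int_\Omega\! z^4\,\mathrm d x + (A-\tilde\alpha\lambda)\norm{P_{\clU_M}z}{L^2(\Omega)}^2.
\]
I first pick $M_*$ so that $(A+\mu)\beta_M\le\nu/2$ for $M\ge M_*$. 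In the unsaturated regime $\tilde\alpha=1$, choosing $\lambda_*=A+2\mu$ makes the projected coefficient $\le-2\mu$, and a second application of the spectral inequality transfers $\mu\norm{P_{\clU_M}z}{L^2(\Omega)}^2$ back into $\mu\norm{z}{L^2(\Omega)}^2$ modulo a controlled gradient term, closing the estimate. In the saturated regime $\tilde\alpha<1$, the radial projection forces $\dnorm{(U_M^\diamond)^{-1}P_{\clU_M}z}{}>C_u/\lambda$; since the actuators in~\eqref{U_M} are pairwise $L^2$-orthogonal with common volume $r|\Omega|M_\sigma^{-1}$, this yields (via the equivalence of $\dnorm{\Bigcdot}{}$ with the Euclidean norm on $\bbR^{M_\sigma}$) $\norm{z}{L^2(\Omega)}^2\ge\norm{P_{\clU_M}z}{L^2(\Omega)}^2\ge\delta^2$ with $\delta^2$ of order $r|\Omega|M_\sigma^{-1}(C_u/\lambda)^2$. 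The superlinear dissipation
\[
-\tfrac12\!\int_\Omega\! z^4\,\mathrm d x \le -\tfrac{1}{2|\Omega|}\norm{z}{L^2(\Omega)}^4 \le -\tfrac{\delta^2}{2|\Omega|}\norm{z}{L^2(\Omega)}^2
\]
then absorbs $(A-\tilde\alpha\lambda)\norm{P_{\clU_M}z}{L^2(\Omega)}^2\le A\norm{z}{L^2(\Omega)}^2$ once $C_u^*$ is chosen so that $\delta^2/(2|\Omega|)\ge A+\mu$.

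\textbf{Main obstacle.}
The principal difficulty will be weaving the saturated and unsaturated subcases into a single pointwise-in-$t$ differential inequality valid for every $t\ge 0$, since $\tilde\alpha$ is only measurable in time; a pointwise case split on whether the constraint is active handles this but must be done carefully. A secondary, more delicate, point is that the coefficient $A$ ostensibly depends on~$\widehat y$, whereas the statement fixes $\lambda_*$ and $C_u^*$ before the trajectory; the resolution relies on a uniform-in-initial-data $L^\infty$ absorbing set of the Schl\"ogl free dynamics, to be produced from the maximum principle together with parabolic smoothing from $W^{1,2}(\Omega)$ data, so that $A$ ultimately depends only on $\norm{\zeta}{\infty}$, $h$, and~$\Omega$.
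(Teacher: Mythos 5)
Your overall architecture (energy estimate for $z=y-\widehat y$, a spectral inequality for the actuator family to handle the unsaturated regime, and the quartic dissipation $-c\norm{z}{L^4(\Omega)}^4\le -c|\Omega|^{-1}\norm{z}{L^2(\Omega)}^4$ to handle the saturated regime) matches the paper's, which implements the last step as a time split via Lemma~\ref{L:expdec.largez0} (the error enters the ball of radius $D=\ovlineC{\mu,\norm{\zeta}{\infty}}$ in finite time and the constraint is inactive there for $C_u\ge C_u^*=\lambda\dnorm{(U_M^\diamond)^{-1}}{}D$) rather than your pointwise case split on $\tilde\alpha$. That difference is harmless. The genuine gap is in your treatment of the reaction term. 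Applying Young's inequality with the full weight $\tfrac12(3\widehat y+\xi_2)^2$ to the cubic term destroys the good coefficient $+3\widehat y^2$ coming from $\int(3\widehat y^2+2\xi_2\widehat y+\xi_1)z^2$: your $a(\widehat y)=\tfrac32\widehat y^2+\xi_2\widehat y+\tfrac12\xi_2^2-\xi_1$ has a \emph{positive} leading coefficient in $\widehat y$, so $\sup_t\norm{a(\widehat y(t))}{L^\infty(\Omega)}$ cannot be bounded by a constant depending only on $\norm{\zeta}{\infty}$. Your proposed fix via a maximum principle and an $L^\infty$ absorbing set does not close this: the theorem requires \eqref{Tmain.exp} for all $t\ge s\ge 0$ with constants independent of $(\widehat y_0,y_0)$ (and, per Remark~\ref{R:Cu-M}, independent of $h$), whereas an absorbing set is entered only after a transient whose length and excursion depend on $\widehat y_0$; moreover, with $\widehat y_0\in W^{1,2}(\Omega)$ for $d\in\{2,3\}$ and $h$ merely in $L^2_{\rm loc}(\bbR_+,L^2(\Omega))$, a uniform $L^\infty$ bound on $\widehat y$ is not available at all. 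The paper avoids the issue entirely by a sharper algebraic step: writing $z^2+3\widehat yz+3\widehat y^2=(\tfrac{15}{16}z+\tfrac{16}{10}\widehat y)^2+\tfrac{31}{256}z^2+\tfrac{44}{100}\widehat y^2$ in \eqref{dtz1-aux}, so that a positive fraction of the $\widehat y^2z^2$ term survives and absorbs the lower-order $\widehat y$-terms, yielding $\widehat C=\max_{s\in\bbR}(-\tfrac{22}{25}s^2-4\xi_2 s-2\xi_1)<\infty$, which depends only on $\norm{\zeta}{\infty}$. Some such sign-preserving decomposition is indispensable; without it your constants $A$, $\lambda_*$, $C_u^*$, $M_*$ are not uniform in the data as the statement requires.

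A secondary caution: the inequality $\norm{(\indO-P_{\clU_M})z}{L^2(\Omega)}^2\le\beta_M\norm{\nabla z}{L^2(\Omega)}^2$ with $\beta_M\to 0$, for the \emph{orthogonal} projection onto the span of indicators that cover only a fraction $r<1$ of each cell, is not a standard Poincar\'e--Wirtinger inequality and is in fact the delicate point that Lemma~\ref{L:HbddVlamM} handles through the oblique-projection framework of \cite{KunRodWalter21,Rod21-aut,Rod21-sicon}; you should either cite that machinery in the form actually proved there or use the paper's formulation $\norm{w}{V}^2+2\lambda\norm{P_{\clU_M}w}{L^2}^2\ge\varpi\norm{w}{L^2}^2$ directly.
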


The proof of Theorem~\ref{T:mainSchloegl} is given in
section~\ref{sS:proofT:mainSchloegl}.
Observe  that Theorem~\ref{T:mainSchloegl} states that every trajectory~$\widehat y$ of the free-dynamics system~\eqref{Schloegl} can be tracked exponentially fast.
Note also that~$M\ge M_*$ does not depend on the pair~$(\widehat y_0,y_0)$ of initial states, which means that the number~$M_\sigma$ of actuators can be chosen independently of both the targeted trajectory~$\widehat y$ and of the initial error~$y_0-\widehat y_0$.

For simplicity, we shall often denote~$L^2\coloneqq L^2(\Omega)$ endowed with the usual scalar product, and we shall denote~$V=W^{1,2}(\Omega)$ endowed with the scalar product
\begin{equation}\label{Vscalarprod}
(w,z)_V\coloneqq \nu(\nabla w,\nabla z)_{(L^2)^d}+(w,z)_{L^2}.
\end{equation}
We also write, for
more general Lebesgue and Sobolev spaces,
\[
L^p\coloneqq L^p(\Omega),\quad W^{s,p}\coloneqq W^{s,p}(\Omega),\qquad p\ge1,\quad s\ge0.
\]

\subsection{On the well-posedness of strong solutions}\label{sS:wellPose}
The free dynamics~\eqref{Schloegl} is the particular case of~\eqref{Schloegl-feed} when we take~$\lambda=0$.
When referring to ``the solution of system~\eqref{Schloegl-feed}'' we mean the strong solution~$y\in W_{\rm loc}(\bbR_+,W^{2,2},L^2)$, with
\begin{align}
W_{\rm loc}(\bbR_+,X,Y)&\coloneqq\{y\mid y\in W((0,T),X,Y),
\mbox{ } T>0\},\notag\\
W((0,T),X,Y)&\coloneqq\{y\in L^2((0,T),X)\!\mid \dot y\in L^2((0,T),Y)\}.\notag
\end{align}
whose existence and uniqueness can be derived as a weak limit of the solutions of finite-dimensional Galerkin approximations and suitable apriori ``energy'' estimates. We skip the details here. We just mention the following apriori-like  ``energy'' estimates, with~$Ay\coloneqq-\nu\Delta y+y$. For simplicity let us denote
\begin{align}
\overline h_\lambda(t)&\coloneqq U_M^\diamond\overline\clK_M(y(t)-\widehat y(t))\notag\\
& =U_M^\diamond\fkP^{\dnorm{\Bigcdot}{}}_{C_u}(-\lambda (U_{M}^\diamond)^{-1}P_{\clU_M}(y(t)-\widehat y(t))),\quad\lambda\ge0,\notag
\end{align}
from which we find
\begin{align}
\norm{\overline h_\lambda(t)}{L^2}&\le \lambda\norm{P_{\clU_M}(y(t)-\widehat y(t))}{L^2}\le \lambda\norm{y(t)-\widehat y(t)}{L^2}.\notag
\end{align}

Multiplying the dynamics by~$2A y$, we obtain
\begin{align}
\tfrac{\rmd}{\rmd t}\norm{y}{V}^2&\le-2\norm{A y}{L^2}^2+2(y,Ay)_{L^2}\notag\\
&\quad-2((y-\zeta_1)(y-\zeta_2)(y-\zeta_3)+h+\overline h_\lambda,Ay)_{L^2}\notag\\
&\le-\tfrac43\norm{A y}{L^2}^2+3\norm{h}{L^2}^2+3\lambda^2\norm{y-\widehat y}{L^2}^2\notag\\
&\quad
+2(-y^3+\xi_2 y^2+(\xi_1+1) y +\xi_0,Ay)_{L^2},\notag
\end{align}
where
\begin{equation}\label{xis}
(\xi_2,\xi_1,\xi_0)\coloneqq (\zeta_1+ \zeta_2+ \zeta_3, - \zeta_1 \zeta_2- \zeta_1 \zeta_3- \zeta_2 \zeta_3, \zeta_1 \zeta_2 \zeta_3),
\end{equation}
which gives us
\begin{align}
\tfrac{\rmd}{\rmd t}\norm{y}{V}^2
&\le-\norm{A y}{L^2}^2+3\norm{\xi_0}{L^2}^2+3\norm{h}{L^2}^2
+3\lambda^2\norm{y-\widehat y}{L^2}^2\notag\\
&\quad
+2(-3y^2+2\xi_2 y ,\nu\norm{\nabla y}{\bbR^d}^2)_{L^2}\notag\\
&\quad
+2(-y^2+\xi_2 y+(\xi_1+1) ,y^2)_{L^2}\notag\\
&\le-\norm{A y}{L^2}^2+3\norm{\xi_0}{L^2}^2+3\norm{h}{L^2}^2\notag\\
&\quad
 +3\lambda^2\norm{y-\widehat y}{L^2}^2
+2C\norm{y}{V}^2,\label{dty-exist}
\end{align}
where~$C\coloneqq\max\{C_0,C_1\}$, with~$C_1\coloneqq\max\limits_{s\in\bbR}(-3s^2+2\xi_2 s)$ and~$C_0\coloneqq\max\limits_{s\in\bbR}(-s^2+\xi_2 s+\xi_1+1)$.

\subsubsection*{The free dynamics}
From~\eqref{dty-exist} we obtain, for~$\lambda=0$,
\begin{align}
\tfrac{\rmd}{\rmd t}\norm{y}{V}^2
&\le-\norm{A y}{L^2}^2+3\norm{\xi_0}{L^2}^2+3\norm{h}{L^2}^2
 +2C\norm{y}{V}^2.\notag
\end{align}
Then, the Gronwall inequality and time integration give us, for an arbitrary~$T>0$,
$
y\in L^\infty((0,T),V){\textstyle\bigcap}L^2((0,T),\rmD(A)).
$
It turns out that the (graph) norm of the domain~$\rmD(A)$ of~$A$ is equivalent to the usual norm in~$W^{2,2}(\Omega)$. Thus, to show that~$y\in W((0,T),W^{2,2},L^2)$, it remains to show that
$\dot y\in L^2((0,T),L^2)$. For this purpose, we  observe that the nonlinearity~$f(y)\coloneqq(y-\zeta_1)(y-\zeta_2)(y-\zeta_3)$ satisfies
\begin{align}
&\norm{f(y)}{L^2}^2=\!\norm{y^3-\xi_2 y^2-(\xi_1+1) y -\xi_0}{L^2}^2\!\le C_1\!\left(\norm{y}{L^6}^6+1\right)\notag
\end{align}
for a suitable constant~$C_1>0$. From~$V\xhookrightarrow{}L^6$, we can derive that
$f(y)\in L^\infty((0,T),L^2)$. Then, from the dynamics equation in~\eqref{Schloegl}, it follows that
$\dot y\in L^2((0,T),L^2)$.
\subsubsection*{The controlled dynamics} By assumption the targeted state~$\widehat y$ satisfies the free dynamics. In particular
$
\widehat y\in L^2((0,T),\rmD(A))\subseteq L^2((0,T),L^2).
$
Now, if~$\lambda>0$ and if~$y$ satisfies the corresponding controlled dynamics with targeted trajectory~$\widehat y$, by~\eqref{dty-exist} we find that
\begin{align}
\tfrac{\rmd}{\rmd t}\norm{y}{V}^2
&\le-\norm{A y}{L^2}^2+3\norm{\xi_0}{L^2}^2+3\norm{h}{L^2}^2
 +6\lambda^2\norm{\widehat y}{L^2}^2\notag\\
&\quad+6\lambda^2\norm{y}{L^2}^2
+2C\norm{y}{V}^2\notag
\end{align}
and, since~$V\xhookrightarrow{} L^2$, it follows that
\begin{align}
\tfrac{\rmd}{\rmd t}\norm{y}{V}^2
&\le-\norm{A y}{L^2}^2+3\norm{\xi_0}{L^2}^2+3\norm{h}{L^2}^2
 +6\lambda^2\norm{\widehat y}{L^2}^2\notag\\
&\quad
+(2C+6\lambda^2C_1)\norm{y}{V}^2,\notag
\end{align}
 for a suitable constant~$C_1>0$. We can argue as in the free dynamics case to conclude that~$y\in W((0,T),W^{2,2},L^2)$.

%%%%%%%%%%%%%%%%%%%%%%%%%%%
\subsection{The dynamics of the error}
For the dynamics of the error~$z\coloneqq y-\widehat y$, that is, of the difference between the solution
$y(t)$ of system~\eqref{Schloegl-feed} and the targeted solution~$\widehat y(t)$ of the free dynamics~\eqref{Schloegl}, we find
\begin{subequations}\label{Schloegl-diff}
\begin{align}
 &\tfrac{\p}{\p t} z -\nu\Delta z+f(y)-f(\widehat y)= U_M^\diamond\overline\clK_M(z)\\
 &z(0)=z_0\coloneqq y_0-\widehat y_0,\qquad\tfrac{\p}{\p\bfn} z\rest{\p\Omega}=0,
\intertext{for given data as in~\eqref{data}, and with}
&
f(w)\coloneqq (w- \zeta_1)(w- \zeta_2)(w- \zeta_3).
\end{align}
\end{subequations}
We start by observing that~$f(w)=w^3 +\xi_2 w^2+\xi_1 w+\xi_0$
with the~$\xi_j$s as in~\eqref{xis},
which leads us to
\begin{align}
f(z+\widehat y)&=z^3 +3\widehat y z^2 +3\widehat y^2 z+\widehat y^3
+\xi_2( z^2+2\widehat y z +\widehat y^2)\notag\\
&\quad + \xi_1(z+\widehat y)+\xi_0\notag\\
&=z^3 +(3\widehat y+\xi_2) z^2+(3\widehat y^2+2\xi_2\widehat y+\xi_1) z+f(\widehat y),\notag
\end{align}
and, thus,
\begin{align}
 \tfrac{\p}{\p t} z &=\nu\Delta z -z^3 -\widehat f(z)+U_M^\diamond\overline\clK_M(z),\notag\\
\widehat f(z)&\coloneqq(3\widehat y+\xi_2) z^2+(3\widehat y^2+2\xi_2\widehat y+\xi_1) z.\notag
 \end{align}

By multiplying the difference dynamics by~$2z$, we find
\begin{align}
 \tfrac{\rmd}{\rmd t} \norm{z}{L^2}^2&=-2\norm{z}{V} ^2+2(z -z^3,z)_{L^2} -2(\widehat f(z),z)_{L^2}\notag\\
&\quad +2(U_M^\diamond\overline\clK_M(z),z)_{L^2}\notag\\
&\hspace{0em}=-2\norm{z}{V} ^2+2\norm{z}{L^2}^2 -2( z^3+\widehat f(z),z)_{L^2}\notag\\
&\quad +2(U_M^\diamond\overline\clK_M(z),z)_{L^2}.\label{dtz1}
\end{align}

Observe that
\begin{subequations}\label{dtz1-aux}
\begin{align}
&-2( z^3+\black\widehat f(z),z)_{L^2}\notag\\
&\qquad =-2\left( z^2+3\widehat yz+\xi_2 z+3\widehat y^2+2\xi_2\widehat y+\xi_1,z^2\right)_{L^2}\notag
\end{align}
and, by writing
\begin{align}
 z^2+3\widehat yz+3\widehat y^2&=(\tfrac{15}{16})^2 z^2+3\widehat yz+(\tfrac{16}{10})^2\widehat y^2\notag\\
&\quad 
+(1-(\tfrac{15}{16})^2) z^2+(3-(\tfrac{16}{10})^2) \widehat y^2\notag
\\
&=(\tfrac{15}{16} z+\tfrac{16}{10}\widehat y)^2
+\tfrac{31}{256} z^2+\tfrac{44}{100} \widehat y^2\notag
\end{align}
 we arrive at
\begin{align}
&-2( z^3+\widehat f(z),z)_{L^2}+2\left((\tfrac{15}{16} z+\tfrac{16}{10}\widehat y)^2,z^2\right)_{L^2}\notag\\
&\le-2\left(\tfrac{31}{256} z^2+\tfrac{44}{100} \widehat y^2+\xi_2 z+2\xi_2\widehat y+\xi_1,z^2\right)_{L^2}
\\
&=-\tfrac{31}{128}\norm{z}{L^4}^4-\left(\tfrac{22}{25} \widehat y^2+4\xi_2\widehat y+2\xi_1,z^2\right)_{L^2}-2\left(\xi_2 z,z^2\right)_{L^2}.\notag
\end{align}
The Cauchy-Schwarz and Young inequalities give us
\begin{align}
&-2\left(\xi_2 z,z^2\right)_{L^2}\le 2\norm{\xi_2}{\bbR}\norm{z}{L^2}\norm{z}{L^4}^2\notag\\
&\qquad\le \gamma^{-1}\norm{\xi_2}{\bbR}^2\norm{z}{L^2}^2+\gamma\norm{z}{L^4}^4,\quad\mbox{for all}\quad\gamma>0.\notag
\intertext{Further, observe that}
&\left(-\tfrac{22}{25}\widehat y^2-4\xi_2\widehat y-2\xi_1,z^2\right)_{L^2}\le \widehat C \norm{z}{L^2}^2,
\intertext{where~$\widehat C\le\ovlineC{\norm{\zeta}{\infty}}$, namely,}
&\widehat C\coloneqq\tfrac{50}{11}\xi_2^2-2\xi_1=\max_{s\in\bbR}(-\tfrac{22}{25}s^2-4\xi_2s-2\xi_1),
\end{align}
\end{subequations}
where~$\norm{\zeta}{\infty}\coloneqq\max\limits_{1\le j\le 3}\norm{ \zeta_j}{\bbR}$;
 recall the~$\xi_i$s defined in~\eqref{xis}.

Now, from~\eqref{dtz1-aux} with~$\gamma=\frac{15}{128}$, we obtain
\begin{align}\notag
-2(z^3+\widehat f(z),z)_{L^2}&\le-\tfrac18\norm{z}{L^4}^4+(\tfrac{128}{15}\norm{\xi_2}{\bbR}^2+\widehat C) \norm{z}{L^2}^2.
\end{align}

Then, together with~\eqref{dtz1}, we arrive at the  inequality
\begin{align}
 \tfrac{\rmd}{\rmd t} \norm{z}{L^2}^2&\le-2\norm{z}{V} ^2-\tfrac18\norm{z}{L^4}^4+(\tfrac{128}{15}\norm{\xi_2}{\bbR}^2+\widehat C+2) \norm{z}{L^2}^2\notag\\ &\quad+2(U_M^\diamond\overline\clK_M(z),z)_{L^2},\label{dtz2}
\end{align}
which shall be used in following sections, together with the following
auxiliary results.
 \begin{lemma}\label{L:gen-poly}\textit{
Let~$(\beta_0,\beta_1,\beta_2,\varkappa,p)\in\bbR_+^5$. Then we have
$ -\beta_2\varkappa^{p}+\beta_0\varkappa\le-\beta_1\varkappa^\frac{p+1}{2}$
if $
\varkappa^\frac{p-1}{2}\ge\tfrac{\beta_1+\sqrt{\beta_1^2+4\beta_2\beta_0}}{2\beta_2}.$
}
\end{lemma}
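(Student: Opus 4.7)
The inequality is purely algebraic. The plan is to divide through by~$\varkappa>0$ and reduce the claim to a one-variable quadratic inequality in $y\coloneqq \varkappa^{(p-1)/2}\ge 0$, then read off the threshold from the positive root of the quadratic.

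\textbf{Step 1: Reformulation.} Dividing the desired inequality $-\beta_2\varkappa^p+\beta_0\varkappa\le -\beta_1\varkappa^{(p+1)/2}$ by the strictly positive number $\varkappa$ turns it into
\[
\beta_2\varkappa^{p-1}-\beta_1\varkappa^{(p-1)/2}-\beta_0\ge 0.
\]
Setting $y\coloneqq\varkappa^{(p-1)/2}\ge 0$ and using $\varkappa^{p-1}=y^2$, this is equivalent to the quadratic inequality $Q(y)\ge 0$, where $Q(y)\coloneqq \beta_2 y^2-\beta_1 y-\beta_0$.

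\textbf{Step 2: Locate the positive root.} Since $\beta_0,\beta_1,\beta_2>0$, the discriminant $\beta_1^2+4\beta_2\beta_0>0$ is strictly positive and $Q$ has two real roots; moreover $Q(0)=-\beta_0<0$, so one root is negative and the other is positive. The positive root is
\[
y_+=\frac{\beta_1+\sqrt{\beta_1^2+4\beta_2\beta_0}}{2\beta_2}.
\]
Because the leading coefficient $\beta_2$ of the quadratic is positive, $Q(y)\ge 0$ holds precisely when $y\ge y_+$ (or $y$ is below the negative root, which cannot occur since $y\ge 0$).

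\textbf{Step 3: Conclude.} The hypothesis of the lemma is exactly $y=\varkappa^{(p-1)/2}\ge y_+$, hence $Q(y)\ge 0$, which is the reformulated inequality, completing the proof. No step is substantively hard; the only thing to be mindful of is invoking $\varkappa>0$ to legitimately divide by $\varkappa$ and to ensure $\varkappa^{(p-1)/2}$ is well defined and nonnegative (which is built into the assumption $\varkappa\in\bbR_+$).
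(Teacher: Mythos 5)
Your proof is correct and follows essentially the same route as the paper: substitute $r=\varkappa^{(p-1)/2}$, factor out $\varkappa>0$, and reduce to the sign of the quadratic $\beta_2 r^2-\beta_1 r-\beta_0$, whose positive root is exactly the stated threshold. You merely spell out the root-location step that the paper leaves implicit.
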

\begin{proof}
With $r\coloneqq \varkappa^\frac{p-1}{2}$, we can rewrite~$-\beta_2\varkappa^{p}+\beta_0\varkappa\le-\beta_1\varkappa^\frac{p+1}{2}$
as~$(-\beta_2r^2+\beta_1r+\beta_0)\varkappa\le0$.
\end{proof}

\begin{lemma}\label{L:Ku-monot}\textit{
The constrained feedback operator satisfies
\begin{align}
&\widetilde\bfK(t)\coloneqq\left(U_M^\diamond\overline\clK_M(z(t)),z(t)\right)_{L^2}\notag\\
&=
\begin{cases}
-\lambda\min\left\{1,\tfrac{C_u}{\dnorm{v(t)}{}}\right\}\norm{P_{\clU_M}z(t)}{L^2}^2
&\mbox{ if } P_{\clU_M}z(t)\ne0,\\
0,&\mbox{ if } P_{\clU_M}z(t)=0,
\end{cases}\notag
\end{align}
where~$v(t)\coloneqq-\lambda(U_M^\diamond)^{-1}P_{\clU_M}z(t)$.
}
\end{lemma}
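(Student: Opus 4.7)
The plan is a direct calculation that unpacks the definitions of the saturated feedback and the radial projection, combined with the self-adjointness of the orthogonal projection~$P_{\clU_M}$.

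First, I would treat the degenerate case. Since~$\lambda>0$ and~$U_M^\diamond\colon\bbR^{M_\sigma}\to\clU_M$ is an isomorphism, the vanishing of $v(t)=-\lambda(U_M^\diamond)^{-1}P_{\clU_M}z(t)$ is equivalent to $P_{\clU_M}z(t)=0$. In that case the identity $\fkP^{\dnorm{\Bigcdot}{}}_{C_u}(0)=0$ from~\eqref{rad.proj} gives $\overline\clK_M(z(t))=0$ and hence $\widetilde\bfK(t)=0$, which matches the second branch of the claimed formula.

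Next, assume $P_{\clU_M}z(t)\ne0$, so $v(t)\ne0$. Using the second identity in~\eqref{rad.proj.min},
\begin{equation}\notag
\overline\clK_M(z(t))=\fkP^{\dnorm{\Bigcdot}{}}_{C_u}(v(t))=\min\!\left\{1,\tfrac{C_u}{\dnorm{v(t)}{}}\right\}v(t)=-\lambda\min\!\left\{1,\tfrac{C_u}{\dnorm{v(t)}{}}\right\}(U_M^\diamond)^{-1}P_{\clU_M}z(t).
\end{equation}
Applying the linear operator $U_M^\diamond$ (and observing that the scalar factor commutes with it) yields
\begin{equation}\notag
U_M^\diamond\overline\clK_M(z(t))=-\lambda\min\!\left\{1,\tfrac{C_u}{\dnorm{v(t)}{}}\right\}P_{\clU_M}z(t)\in\clU_M.
\end{equation}

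Finally, since the right-hand side belongs to~$\clU_M$ and $P_{\clU_M}$ is the orthogonal projection in~$L^2$ onto~$\clU_M$ (hence self-adjoint and idempotent), I take the $L^2$-inner product with~$z(t)$ and peel off the scalar factor to get
\begin{equation}\notag
\widetilde\bfK(t)=-\lambda\min\!\left\{1,\tfrac{C_u}{\dnorm{v(t)}{}}\right\}\bigl(P_{\clU_M}z(t),z(t)\bigr)_{L^2}=-\lambda\min\!\left\{1,\tfrac{C_u}{\dnorm{v(t)}{}}\right\}\norm{P_{\clU_M}z(t)}{L^2}^2,
\end{equation}
which is the first branch of the claimed identity.

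There is no real obstacle here; the statement is essentially a bookkeeping lemma, and the only point that requires a brief justification is the equivalence $v(t)=0\iff P_{\clU_M}z(t)=0$ used to merge the two cases at the boundary. The content that is actually being recorded for later use is the sign (the feedback is dissipative on~$\clU_M$) together with the explicit scalar factor $\min\{1,C_u/\dnorm{v(t)}{}\}$, which will be crucial in section~\ref{sS:proofT:mainSchloegl} when the stabilizing term in~\eqref{dtz2} must be bounded below in terms of~$\norm{P_{\clU_M}z}{L^2}^2$ under the constraint~$C_u<+\infty$.
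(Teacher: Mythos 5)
Your argument is correct and follows essentially the same route as the paper: invoke \eqref{rad.proj.min} to extract the scalar factor $\min\{1,C_u/\dnorm{v(t)}{}\}$, note $U_M^\diamond v(t)=-\lambda P_{\clU_M}z(t)$, and use self-adjointness and idempotence of the orthogonal projection to obtain $\norm{P_{\clU_M}z(t)}{L^2}^2$. You simply spell out the degenerate case and the final projection step that the paper leaves implicit.
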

\begin{proof}
Recalling~\eqref{rad.proj.min} and the feedback in~\eqref{Schloegl-feed}, with~$v(t)=-\lambda(U_M^\diamond)^{-1}P_{\clU_M}z(t)$ we find
\begin{align}\notag
\widetilde\bfK(t)&=\left(U_M^\diamond\fkP^{\dnorm{\Bigcdot}{}}_{C_u}(-\lambda(U_M^\diamond)^{-1}P_{\clU_M}z(t)),z(t)\right)_{L^2}\notag\\
&=\min\left\{1,\tfrac{C_u}{\dnorm{v(t)}{}}\right\}(U_M^\diamond v(t),z(t))_{L^2},\quad\mbox{if}\quad v(t)\ne0,\notag
\end{align}
from which we can conclude the proof.
\end{proof}

%%%%%%%%%%%%%%%%%%%%%%%%%%%
\subsection{Norm decrease  for large error}\label{sS:normdeclargeerror}%
We consider again system~\eqref{Schloegl-feed}.
We show that if the error norm is large, at a given instant of time,
then such norm is decreasing at that instant of time. Here, the nonlinear term plays a crucial role.
Also, recall that~$\widehat y_0=\widehat y(0)$ is the initial state of the targeted trajectory~$\widehat y$ solving~\eqref{Schloegl}.
\begin{lemma}\label{L:expdec.largez0}\textit{
For every~$\mu>0$, there is a constant~$D\ge1$ such that for all~$(z_0,\widehat y_0)\in W^{1,2}(\Omega)\times W^{1,2}(\Omega)$ the solution of system~\eqref{Schloegl-diff}
satisfies
\begin{align}
 &\tfrac{\rmd}{\rmd t}\norm{z(t)}{L^{2}(\Omega)}\le-\mu\norm{z(t)}{L^{2}(\Omega)}\quad\mbox{if}\quad \norm{z(t)}{L^{2}(\Omega)}\ge D,\notag
\intertext{and}
&\norm{z(t)}{L^{2}(\Omega)}\le D\quad\mbox{for all}\quad t\ge (\mu^2D)^{-\frac1{2}}.\notag
\end{align}
Moreover, if for some~$s\ge0$ we have that~$\norm{z(s)}{L^{2}(\Omega)}\le D$,  then~$\norm{z(t)}{L^{2}(\Omega)}\le D$ for all~$t\ge s$.
Further,
$D\le\ovlineC{\mu,\norm{\zeta}{\infty}}$ is independent of~$(z_0,\widehat y_0, C_u)$.
}
\end{lemma}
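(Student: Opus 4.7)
The plan is to reduce everything to a scalar differential inequality for $\varkappa(t)\coloneqq\norm{z(t)}{L^2}^2$ in which the feedback can only help. By Lemma~\ref{L:Ku-monot}, the term $(U_M^\diamond\overline\clK_M(z(t)),z(t))_{L^2}$ is non-positive for \emph{every} value of $C_u\in[0,+\infty]$; this is the crucial observation that will make $D$ independent of $C_u$. Dropping this non-positive feedback term together with the non-positive $-2\norm{z}{V}^2$ in~\eqref{dtz2}, and using the elementary H\"older bound $\norm{z}{L^2}^4\le|\Omega|\,\norm{z}{L^4}^4$, I obtain the autonomous Riccati-type differential inequality
\[
\dot\varkappa\le-\tfrac{1}{8|\Omega|}\varkappa^2+\beta_0\,\varkappa,\qquad \beta_0\coloneqq\tfrac{128}{15}\norm{\xi_2}{\bbR}^2+\widehat C+2\le\ovlineC{\norm{\zeta}{\infty}}.
\]

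Next I would fix $D\ge1$ large enough so that the cubic dissipation dominates the linear source with a margin of $2\mu$. Concretely, choose
\[
D^2\coloneqq\max\bigl\{1,\ 8|\Omega|(\beta_0+2\mu),\ (16|\Omega|\mu)^{4/3}\bigr\},
\]
which depends only on $\mu$ and $\norm{\zeta}{\infty}$ (with $|\Omega|$ treated as data). Two consequences then follow whenever $\varkappa(t)\ge D^2$: first, $\tfrac{D^2}{8|\Omega|}\ge\beta_0+2\mu$ gives $\dot\varkappa\le-2\mu\varkappa$, which upon dividing by $2\norm{z}{L^2}$ is exactly the announced pointwise bound $\tfrac{\rmd}{\rmd t}\norm{z}{L^2}\le-\mu\norm{z}{L^2}$; second, $\tfrac{D^2}{16|\Omega|}\ge\beta_0$ gives the stronger $\dot\varkappa\le-\tfrac{1}{16|\Omega|}\varkappa^2$. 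The strict inequality $\dot\varkappa\le-2\mu D^2<0$ at $\varkappa=D^2$ forbids $\varkappa$ from crossing above $D^2$ from below, which delivers the asserted invariance: once $\norm{z(s)}{L^2}\le D$, the bound persists for all $t\ge s$.

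For the uniform time bound I would integrate the comparison Riccati inequality $\dot\varkappa\le-\tfrac{1}{16|\Omega|}\varkappa^2$ while $\varkappa\ge D^2$; this yields $\tfrac{1}{\varkappa(t)}-\tfrac{1}{\varkappa(0)}\ge\tfrac{t}{16|\Omega|}$, hence $\varkappa(t)\le\tfrac{16|\Omega|}{t}$, \emph{independently of the initial size} $\norm{z_0}{L^2}$. Therefore $\varkappa(t)\le D^2$ as soon as $t\ge 16|\Omega|/D^2$, and the built-in choice $D\ge(16|\Omega|\mu)^{2/3}$ makes $16|\Omega|/D^2\le 1/(\mu\sqrt{D})=(\mu^2D)^{-1/2}$, which is the time bound stated in the lemma.

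The main difficulty is conceptual rather than technical: one must identify that Lemma~\ref{L:Ku-monot} provides non-positivity of the feedback contribution uniformly in $C_u$, so that the intrinsic cubic damping of the Schl\"ogl nonlinearity --- already isolated in the $-\tfrac{1}{8}\norm{z}{L^4}^4$ term of~\eqref{dtz2} --- is by itself strong enough to drive arbitrarily large errors into an absorbing ball of universal radius $D$. Everything else is bookkeeping of constants to ensure that $D$ depends only on $\mu$ and $\norm{\zeta}{\infty}$ (the quantitative monotone-trapping estimate is essentially the content of Lemma~\ref{L:gen-poly} applied with $p=2$).
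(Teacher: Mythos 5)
Your proof is correct in substance and follows essentially the same route as the paper: both drop the $V$-norm term and the feedback term (non-positive uniformly in $C_u$ by Lemma~\ref{L:Ku-monot}) from~\eqref{dtz2}, pass to the scalar inequality $\dot\varkappa\le-\tfrac{1}{8|\Omega|}\varkappa^2+\beta_0\varkappa$ for $\varkappa=\norm{z}{L^2}^2$, and exploit the superlinear damping above a threshold to obtain an absorption time independent of the initial datum (the paper extracts $\dot\varkappa\le-2\mu\varkappa^{3/2}$ via Lemma~\ref{L:gen-poly} and integrates that power ODE explicitly, whereas you keep the pure Riccati term $-\varkappa^2/(16|\Omega|)$ --- an equivalent variant). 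One small arithmetic slip: your choice $D^2\ge 8|\Omega|(\beta_0+2\mu)$ only yields $\tfrac{D^2}{16|\Omega|}\ge\tfrac{\beta_0}{2}+\mu$, which is $\ge\beta_0$ only when $2\mu\ge\beta_0$, so to justify the second consequence you should also include $16|\Omega|\beta_0$ in the $\max$ defining $D^2$; this is a trivial fix that affects nothing else.
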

\begin{proof}
By~\eqref{dtz2} and Lemma~\ref{L:Ku-monot} we find
\begin{align}
 \tfrac{\rmd}{\rmd t} \norm{z}{L^2}^2&\le-\tfrac18\norm{z}{L^4}^4+(\tfrac{128}{15}\norm{\xi_2}{\bbR}^2+\widehat C+2) \norm{z}{L^2}^2,\notag
 \end{align}
which, together with
\[
\norm{z}{L^2}^2\le\dnorm{\Omega}{}^\frac12\norm{z}{L^4}^2,\qquad \dnorm{\Omega}{}\coloneqq{\int_\Omega} 1\,\rmd x,
\]
gives us
\begin{align}
 \tfrac{\rmd}{\rmd t} \norm{z}{L^2}^2&\le -\tfrac18\dnorm{\Omega}{}^{-1}\norm{z}{L^{2}}^{4}+C_1\norm{z}{L^{2}}^{2},\notag
 \end{align}
where $0\le C_1=\tfrac{128}{15}\norm{\xi_2}{\bbR}^2+\widehat C+2\le\ovlineC{\norm{\zeta}{\infty}}$; with~$\xi_2$ and~$\widehat C$ defined in~\eqref{xis} and~\eqref{dtz1-aux}, respectively.

By taking~$(\beta_0,\beta_1,\beta_2,\varkappa,p)=(C_1,2\mu,\frac18\dnorm{\Omega}{}^{-1},\norm{z}{L^{2}}^{2},2)$ in Lemma~\ref{L:gen-poly}, we find
\begin{align}
&\tfrac{\rmd}{\rmd t} \norm{z}{L^{2}}^{2}\le -2\mu(\norm{z}{L^{2}}^{2})^{\frac{3}{2}}\notag\\
&\mbox{while}\quad \norm{z}{L^{2}} \ge \widehat D\coloneqq \tfrac{2\mu+\sqrt{4\mu^2+\frac12\dnorm{\Omega}{}^{-1}C_1}}{\frac14\dnorm{\Omega}{}^{-1}}.
\label{L2k-dty5}
 \end{align}
In particular
\begin{subequations}\label{L2k-dty6}
\begin{align}
&\tfrac{\rmd}{\rmd t} \norm{z}{L^{2}}^{2}\le -2\mu(\norm{z}{L^{2}}^{2})^{\frac{3}{2}}\quad\mbox{and}\quad\tfrac{\rmd}{\rmd t} \norm{z}{L^{2}}^{2}\le -2\mu\norm{z}{L^{2}}^{2}\\
&\hspace{0em}\mbox{while}\quad \norm{z}{L^{2}} \ge D\coloneqq\max\{1,\widehat D\}.
 \end{align}
\end{subequations}
Next, observe that for~$r>1$, the solution~$\varphi(t)\in\bbR$ of
\[
\dot\varphi=-2\mu\varphi^r,\qquad \varphi(0)=\varphi_0\ge0,\qquad t\ge0,
\]
is given by
\[
\varphi(t)=\frac{\varphi_0}{(1+2\mu(r-1)\varphi_0^{r-1}t)^\frac{1}{r-1}}.
\]
Hence, from~\eqref{L2k-dty6}, we conclude that if~$\norm{z_0}{L^{2}}>D$, then
\begin{align}
\norm{z(t)}{L^{2}}^{2}
\le \frac{\norm{z_0}{L^{2}}^{2}}{\left(1+\mu\norm{z_0}{L^{2}}t\right)^{2}}
\quad\mbox{while}\quad \norm{z(t)}{L^{2}} \ge  D.\notag
 \end{align}
In particular
\begin{align}
\norm{z(t)}{L^{2}}^{2}
\le (\mu t)^{-2} \mbox{ while } \norm{z(t)}{L^{2}} \ge  D,\quad \mbox{if } \norm{z_0}{L^{2}} >  D.\notag
 \end{align}

Observe also that if~$\norm{z_0}{L^{2}}>  D$, then
\begin{align}
&(\mu t)^{-2}= D
\quad\Longleftrightarrow\quad
t=\tau\coloneqq(\mu^2 D)^{-\frac12}.\notag
\end{align}

Therefore,
if $\norm{z_0}{L^{2}}> D$, then~$\norm{z(t_*)}{L^{2}}=  D$ for some $ t_*\le \tau$.
By~\eqref{L2k-dty6} it also follows that, if
$\norm{z(t_0)}{L^{2}}\le  D$ for some $t_0\ge 0$, then~$\norm{z(t)}{L^{2}}\le D$ for all $t\ge t_0$.
In particular, for every initial error~$z_0\in W^{1,2}(\Omega)$ it holds that
$\norm{z(t)}{L^{2}}\le D$, for all $t\ge\tau$.
Furthermore, from the second inequality in~\eqref{L2k-dty6} we can conclude that
 if~$\norm{z_0}{L^{2}} >  D$, then
\[
\norm{z(t)}{L^{2}}^2\le\rme^{-2\mu(t-s)}\norm{z(s)}{L^{2}}^2 \quad\mbox{while}\quad \norm{z(t)}{L^{2}} \ge  D.
\]

Finally, the constants $\widehat D\le\ovlineC{\mu,\norm{\zeta}{\infty}}$ and~$D\le\ovlineC{\mu,\norm{\zeta}{\infty}}$, defined in~\eqref{L2k-dty5} and~\eqref{L2k-dty6}, are independent of~$(z_0,\widehat y_0,C_u)$.
\end{proof}

%%%%%%%%%%%%%%%%%%%%%%%%%%%
\subsection{A property of the sequence of families of actuators}\label{sS:act.Lemma}
We present an auxiliary result concerning the sequence~$(\clU_M)_{M\in\bbN_+}$ in section~\ref{sS:actuators}.
Recall that~$P_{\clU_M}$ stands for the orthogonal projection in~$L^2(\Omega)$
onto~$\clU_M$. Further, for simplicity we denote by~$S^\perp\coloneqq S^{\perp,L^2(\Omega)}$ the orthogonal complement in~$L^2(\Omega)$ of a subset~$S\subseteq L^2(\Omega)$.
\begin{lemma}\label{L:HbddVlamM}\textit{
For every~$\varpi>0$, there exists~$M_*=\ovlineC{\varpi}\in\bbN_+$ such that
for all~$M\ge M_*$ we can find~$\lambda_*=\lambda_*(M)=\ovlineC{\varpi}>0$ such that
\[
\norm{w}{V}^2+2\lambda_*\norm{P_{\clU_M}w}{H}^2\ge \varpi\norm{w}{L^2}^2,\mbox{ for all }  w\in V=W^{1,2}(\Omega).
\]
}
\end{lemma}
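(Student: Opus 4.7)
The plan is to reduce the lemma to a local Poincaré-type estimate on each cell of a partition of $\Omega$ by small rectangles, each housing exactly one actuator. First I would partition $\Omega$ (up to a null set) into the $M^d$ congruent axis-aligned subrectangles
\[
R_j^M \coloneqq \{x\in\Omega \mid |x_n - (c_n)_j^M| < L_n/(2M),\ n=1,\dots,d\},
\]
each concentric with and containing $\omega_j^M$, with the fixed volume ratio $|\omega_j^M|/|R_j^M| = r^d$ independent of $M$. Since the supports $\omega_j^M$ are pairwise disjoint, $\{\indf_{\omega_j^M}\}_{j=1}^{M_\sigma}$ is an orthogonal family in $L^2(\Omega)$, so the projection is explicit:
\[
P_{\clU_M}w = \sum_{j=1}^{M_\sigma}\overline{w}_j\,\indf_{\omega_j^M},\qquad \overline{w}_j \coloneqq \tfrac{1}{|\omega_j^M|}\!\int_{\omega_j^M}\!w\,\rmd x,\qquad \norm{P_{\clU_M}w}{L^2}^2 = r^d\sum_{j}|R_j^M|\,\overline{w}_j^{\,2}.
\]

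The key step is to establish the local Poincaré-type estimate
\[
\norm{w-\overline{w}_j}{L^2(R_j^M)}^2 \le \tfrac{C_0}{M^2}\norm{\nabla w}{L^2(R_j^M)^d}^2,
\]
with a constant $C_0=C_0(\Omega,r)$ independent of both $j$ and $M$. The Payne--Weinberger inequality for convex domains gives such an estimate with $\overline{w}_j$ replaced by the full mean $\langle w\rangle_{R_j^M} \coloneqq \tfrac{1}{|R_j^M|}\int_{R_j^M}w$ and with constant $(\mathrm{diam}(R_j^M)/\pi)^2 = (\mathrm{diam}(\Omega))^2/(M\pi)^2$. The two means are comparable by Cauchy--Schwarz,
\[
|\overline{w}_j - \langle w\rangle_{R_j^M}|^2 \le \tfrac{1}{|\omega_j^M|}\norm{w - \langle w\rangle_{R_j^M}}{L^2(R_j^M)}^2,
\]
so combining this with the triangle inequality yields the displayed estimate with $C_0 = 2(1+r^{-d})(\mathrm{diam}(\Omega))^2/\pi^2$, a quantity depending only on $\Omega$ and $r$.

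Summing the local estimate over $j$ and using $\norm{w}{L^2(R_j^M)}^2 \le 2\norm{w-\overline{w}_j}{L^2(R_j^M)}^2 + 2|R_j^M|\,\overline{w}_j^{\,2}$, I would arrive at the global inequality
\[
\norm{w}{L^2}^2 \le \tfrac{2C_0}{M^2}\norm{\nabla w}{(L^2)^d}^2 + 2r^{-d}\norm{P_{\clU_M}w}{L^2}^2.
\]
Given $\varpi>0$, choosing $M_*\in\bbN_+$ with $M_*^2 \ge 2C_0\varpi/\nu$, so that $M_*\le \ovlineC{\varpi}$, together with $\lambda_*\ge \varpi\,r^{-d}$, immediately yields, for every $M\ge M_*$,
\[
\varpi\norm{w}{L^2}^2 \le \nu\norm{\nabla w}{(L^2)^d}^2 + 2\lambda_*\norm{P_{\clU_M}w}{L^2}^2 \le \norm{w}{V}^2 + 2\lambda_*\norm{P_{\clU_M}w}{L^2}^2,
\]
which is the asserted inequality.

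The one delicate point is keeping the constant $C_0$ in the local Poincaré estimate uniform in $M$ even though the reference constant $\overline{w}_j$ is the mean over the \emph{smaller} concentric actuator support $\omega_j^M$ rather than over the full cell $R_j^M$. This uniformity hinges on the ratio $|\omega_j^M|/|R_j^M| = r^d$ being a fixed positive number independent of $M$, which is precisely why the actuator design in~\eqref{U_M} has the supports $\omega_j^M$ scale down proportionally with the partition cells as $M$ grows.
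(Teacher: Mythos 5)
Your proof is correct, but it follows a genuinely different route from the paper's. The paper derives the lemma from \cite[Cor.~3.1 and Lem.~3.5]{KunRodWalter21}: it introduces an auxiliary space $\widetilde\clU_M$ spanned by regularized actuators, bounds the norm of the oblique projection $P_{\widetilde\clU_M}^{\clU_M^\perp}$ uniformly in $M$ using \cite[sect.~6]{Rod21-aut}, and invokes the divergence $\beta_{M_+}=C_1M^2+1\to+\infty$ from \cite[sect.~5]{Rod21-sicon}; that framework covers general diffusion-like operators and non-orthogonal actuator families, but it returns a threshold $\lambda_*$ proportional to $\beta_M^2$, hence potentially growing like $M^4$ (cf.\ Remark~\ref{R:lambda-M}). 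You instead exploit the specific geometry of~\eqref{U_M}: since the supports $\omega_j^M$ are pairwise disjoint, $P_{\clU_M}$ is explicit local averaging, and a cell-by-cell Payne--Weinberger inequality on the concentric cells $R_j^M$ --- with the discrepancy between the mean over $\omega_j^M$ and the mean over $R_j^M$ controlled by the fixed volume fraction $r^d$ --- yields the global bound $\norm{w}{L^2}^2\le 2C_0M^{-2}\norm{\nabla w}{(L^2)^d}^2+2r^{-d}\norm{P_{\clU_M}w}{L^2}^2$ directly, after which the choice of $M_*$ and $\lambda_*$ is immediate. Both arguments are sound. Yours is self-contained and elementary, and it even gives the sharper conclusion that $\lambda_*=\varpi r^{-d}$ can be taken independently of $M$ for this particular actuator family; what it gives up is the generality of the oblique-projection machinery, which does not require the actuators to be mutually orthogonal indicator functions sitting inside a congruent rectangular partition of the domain.
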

\begin{proof}
In fact Lemma~\ref{L:HbddVlamM} follows from the result in~\cite[Cor.~3.1]{KunRodWalter21}
for general diffusion-like operators, which include the shifted Laplacian
$A=-\nu\Delta+\Id$, as mentioned in~\cite[sect.~5]{KunRodWalter21}. From the proof of~\cite[Cor.~3.1]{KunRodWalter21} we find
\[
\norm{w}{V}^2+2\lambda\norm{P_{\clU_M}w}{H}^2\ge\norm{w}{V}^2+2\lambda\norm{P_{\widetilde\clU_M}^{\clU_M^\perp}w}{\clL(H)}^{-2}\norm{P_{\widetilde\clU_M}^{\clU_M^\perp}P_{\clU_M}w}{H}^2
\]
where~$\widetilde\clU_M$ is an auxiliary finite-dimensional space, satisfying ~$L^2(\Omega)=\widetilde\clU_M+\clU_M^\perp$ and $\widetilde\clU_M\bigcap\clU_M^\perp=\{0\}$, and~$P_{\widetilde\clU_M}^{\clU_M^\perp}$ is the oblique projection in~$L^2(\Omega)$ onto~$\widetilde\clU_M$ along~$\clU_M^\perp$. From~\cite[sect.~6]{Rod21-aut} we know that by choosing the auxiliary space as the span of ``regularized'' actuators as in~\cite[Eq.~(6.8)]{Rod21-aut}, then the norm of the oblique projection is independent of~$M$. Hence
\[
\norm{w}{V}^2+2\lambda_*\norm{P_{\clU_M}w}{H}^2\ge\norm{w}{V}^2+2\lambda_* \Xi\norm{P_{\widetilde\clU_M}^{\clU_M^\perp}w}{H}^2
\]
with~$\Xi$ independent of~$M$. By the proof of~\cite[Lem.~3.5]{KunRodWalter21} the desired result follows if
\[
\beta_{M_+}\ge 4\varpi\quad\mbox{and}\quad\lambda_*\ge (2\varpi\norm{\Id}{\clL(V,H)}^2+1)\tfrac{\beta_{M}^2}{2\Xi},
\]
where, see~\cite[Eqs.~(2.2) and~(3.1)]{KunRodWalter21},
\[
\beta_{M_+}\coloneqq\inf_{\varTheta\in (V\bigcap\clU_M^\perp)\setminus\{0\}}\tfrac{\norm{\varTheta}{V}}{\norm{\varTheta}{H}}
\quad\mbox{and}\quad
\beta_M\coloneqq\sup_{\theta\in \widetilde\clU_M\setminus\{0\}}\tfrac{\norm{\theta}{V}}{\norm{\theta}{H}}.
\]
From~\cite[sect.~5]{Rod21-sicon}, it follows that~$\beta_{M_+}=C_1M^2+1$.
Therefore, we can choose~$M_*=\min\{M\in\bbN_+\mid \beta_{M_+}\ge 4\varpi\}$ and
$\lambda_*=(2\varpi\norm{\Id}{\clL(V,H)}^2+1)\tfrac{\beta_{M}^2}{2\Xi}\}$.
\end{proof}

\begin{remark}\label{R:lambda-M}
 From~\cite[end of Sect.~5]{Rod21-sicon} (see also~\cite[Thm.~6.1]{Rod21-aut}) it follows that~$\beta_{M}\ge C_2M^2+1$. Therefore, for large~$M$  we (may) need to take large $\lambda_*$.
\end{remark}
\begin{remark}\label{R:PolyDom}
The divergence~$\beta_M\to+\infty$ plays a crucial role in the derivation of Lemma~\ref{L:HbddVlamM}. The proof of such divergence, shown in~\cite[Sect.~5]{Rod21-sicon} \cite[Thm.~6.1]{Rod21-aut} for rectangular domains (boxes), can be adapted for general polygonal domains which are the union of a finite number of triangles (simplexes). The proof in~\cite{Rod21-sicon,Rod21-aut} is based on the fact that a rectangle can be partitioned into rescaled copies of itself. Note that a triangle can also be partitioned into smaller triangles. Indeed, for planar triangles, $d=2$, we obtain $4$ similar congruent triangles by connecting the middle points of the edges, and iterating the procedure we obtain finer partitions into congruent triangles. For the case~$d=3$, it may be not possible to partition a triangle~$\clT$ (tetrahedron) into smaller triangles all congruent to~$\clT$, however the partition is possible into triangles where the number of congruent classes does not exceed~$3$; see~\cite[Sect.~3 and Fig.~5]{EdelGrayson00} \cite[Thm.~4.1 and Fig.~5]{Bey00}. This fact allows us to repeat/adapt the arguments in~\cite{Rod21-sicon,Rod21-aut}.  

On the other hand the satisfiability of the divergence~$\beta_M\to+\infty$ for smooth domains is an open nontrivial question (cf. \cite[Conj.~4.6]{Rod21-jnls} and discussion thereafter).
\end{remark}

%%%%%%%%%%%%%%%%%%%%%%%%%%%
\subsection{Proof of Theorem~\ref{T:mainSchloegl}}\label{sS:proofT:mainSchloegl}
For an arbitrary given~$\mu>0$, by~\eqref{dtz2} we have that
\begin{align}
 \tfrac{\rmd}{\rmd t} \norm{z}{L^2}^2&\le-\norm{z}{V} ^2+(\tfrac{128}{15}\norm{\xi_2}{\bbR}^2+\widehat C+2) \norm{z}{L^2}^2\notag\\ &\quad+2(U_M^\diamond\overline\clK_M(z),z)_{L^2}\label{dtz0}\\
&\hspace{-.5em}\le-\norm{z}{V} ^2 +2(U_M^\diamond\overline\clK_M(z),z)_{L^2}+\varpi\norm{z}{L^2}^2-2\mu\norm{z}{L^2}^2
,\notag
\end{align}
with
\begin{equation}\notag
\varpi\coloneqq 2\mu+\tfrac{128}{15}\norm{\xi_2}{\bbR}^2+\widehat C+2,
\end{equation}
and~$\xi_2$ and~$\widehat C$ as in~\eqref{xis} and~\eqref{dtz1-aux}.
With~$M_*=\ovlineC{\varpi}\le\ovlineC{\mu,\norm{\zeta}{\infty}}$ and~$\lambda_*>0$ be given by Lemma~\ref{L:HbddVlamM}, we arrive at
\begin{equation}\notag
\norm{z}{V}^2+2\lambda\norm{P_{\clU_M}z}{L^2}^2\ge \varpi\norm{z}{L^2}^2,\mbox{ if }  M\ge\! M_* \mbox{ and } \lambda\ge\!\lambda_*(M),
\end{equation}
which leads us to
\begin{align}
 \tfrac{\rmd}{\rmd t} \norm{z}{L^2}^2
&\le 2(U_M^\diamond\overline\clK_M(z),z)_{L^2}+2\lambda\norm{P_{\clU_M}z}{L^2}^2
\notag\\
&\quad-2\mu\norm{z}{L^2}^2.\label{dtz+2}
\end{align}

Observe that, by Lemma~\ref{L:Ku-monot}, we have that
\begin{align}
&\left(U_M^\diamond\overline\clK_M(z(t)),z(t)\right)_{L^2}=
-\lambda\norm{P_{\clU_M}z(t)}{L^2}^2\notag\\
\quad\Longleftarrow\quad& \dnorm{-\lambda(U_M^\diamond)^{-1}P_{\clU_M}z}{}\le C_u\notag\\
\quad\Longleftarrow\quad&  C_u\ge \lambda\dnorm{(U_M^\diamond)^{-1}}{}\norm{z}{L^2}.\label{inactCu1}
\end{align}
where~$\dnorm{(U_M^\diamond)^{-1}}{}\coloneqq\max\limits_{w\in L^2\setminus\{0\}}\frac{\dnorm{(U_M^\diamond)^{-1}w}{}}{\norm{w}{L^2}}$.

From Lemma~\ref{L:expdec.largez0}, there is a constant~$D\ge 1$ such that for
\begin{subequations}\label{zhaty.tauD-lt}
\begin{align}
&t_\rma\coloneqq\min\{t\ge0\mid\norm{z(t)}{L^2}\le D\}\le(\mu^2D)^{-\frac12},\\
\intertext{we have that}
&\norm{z(t)}{L^2}\le \rme^{-\mu(t-s)}\norm{z(s)}{L^2},\mbox{ for all } 0\le s\le t\le t_\rma,\\
&\norm{z(t)}{L^2}\le D,\mbox{ for all } t\ge t_\rma.
\end{align}
\end{subequations}

Now, motivated by~\eqref{inactCu1} we set
\begin{equation}\label{Cu*}
C_u^*\coloneqq  \lambda\dnorm{(U_M^\diamond)^{-1}}{}D.
\end{equation}

We can conclude that
\begin{align}
\dnorm{-\lambda(U_M^\diamond)^{-1}P_{\clU_M}z(t)}{}\le C_u,\quad\mbox{if}\quad C_u\ge C_u^*
\quad\mbox{and}\quad t\ge t_\rma.\notag
\end{align}
By~\eqref{inactCu1}, we have~$(U_M^\diamond\overline\clK(z(t)),z(t))_{L^2}=-\lambda\norm{P_{\clU_M}z(t)}{L^2}^2$ if~$C_u\ge C_u^*$ and~$t\ge t_\rma$.
Therefore, by~\eqref{dtz+2},
\begin{align}
 \tfrac{\rmd}{\rmd t} \norm{z}{L^2}^2
&\le-2\mu\norm{z}{L^2}^2,\quad\mbox{for all}\quad t\ge t_\rma, \quad\mbox{if}\quad C_u\ge C_u^*,\notag
\end{align}
which implies that
\begin{align}
\norm{z(t)}{L^2}
&\le\rme^{-\mu(t-s)}\norm{z(s)}{L^2},\;\; t\ge s \ge t_\rma, \;\; C_u\ge C_u^*.\label{z1+}
\end{align}

Combining~\eqref{zhaty.tauD-lt} and~\eqref{z1+} we find
\begin{align}
&\norm{z(t)}{L^2}\le\rme^{-\mu(t-t_\rma)} \rme^{-\mu(t_\rma-s)}\norm{z(s)}{L^2}=\rme^{-\mu(t-s)} \norm{z(s)}{L^2},\notag\\ &\hspace{3em}\quad\mbox{for all}\quad  t\ge t_\rma\ge s\ge0,\quad\mbox{if}\quad C_u\ge C_u^*.\label{z1x}
\end{align}
By~\eqref{zhaty.tauD-lt},~\eqref{z1+}, and~\eqref{z1x}, we have that~\eqref{Tmain.exp} holds true.
\hfill\qed

\begin{remark}\label{R:Cu-M}
Note that~$C_u^*$ as in~\eqref{Cu*} depends on~$\lambda=\lambda(M)$. Due to Remark~\ref{R:lambda-M}, we can expect~$\lambda(M)$ to increase with~$M$. From the proof of Theorem~\ref{T:mainSchloegl} it also follows that the triple~$(M_*,\lambda_*,C_u^*)$ can be chosen independent of $h$, however, the same triple depends on~$\nu$ through the used Lemma~\ref{L:HbddVlamM}, because by~\eqref{Vscalarprod} the $V$-norm depends on~$\nu$.
\end{remark}

%%%%%%%%%%%%%%%%%%%%%%%%%%%
%%%%%%%%%%%%%%%%%%%%%%%%%%%
 \section{Receding Horizon Control}
  \label{S:optimalcontrol}
 We investigate the performance and stability of Algorithm~\ref{RHA} applied to  infinite-horizon problem~\eqref{inf-hor-por}.  To present our results, we introduce the finite- and infinite-horizon value functions.
\begin{definition} Let $\widehat y \in  W_{\rm loc}(\bbR_+,W^{2,2},L^2)$ (cf. sect.~\ref{sS:wellPose}) be a solution to~\eqref{sys-haty} for a given pair $(\widehat y_0,h)$.  The infinite-horizon value function $V_{\infty}:  W^{1,2} \to\overline{ \mathbb{R}_+}$ and the finite-horizon value function $V_{T}\colon\overline{ \mathbb{R}_+}\times W^{1,2} \to \overline{\mathbb{R}_+}$, with~$T>0$, are as follows; see~\eqref{inf-hor-por} and~\eqref{finite-OP}. With~$I_{t_0}^T\coloneqq(t_0,t_0+T)$,
\begin{align}\notag
V_{\infty}(y_0)&\coloneqq
\inf_{ u \in L^2(\mathbb{R}_+,\mathbb{R}^{M_{\sigma}})}\{J_ {\infty}({u;y_0,\widehat y)} \text{ subject to~\eqref{sys-y-u-Cu}}\}.
\\\notag
V_{T}(t_0, \bar {y}_0)&\coloneqq \inf_{u \in L^2(I_{t_0}^T,\mathbb{R}^{M_{\sigma}})} \{J_{T}(u;t_0,\bar{y}_0,\widehat y) \text{ subject to~\eqref{finite-OP-subto}} \}.
\end{align}
\end{definition}
\begin{theorem}
\label{subopth}\textit{
 Suppose that for a chosen $(\mu, \lambda, C_u)$,  system~\eqref{Schloegl-feed} is globally exponentially stable with rate $\mu$  around the reference trajectory $\widehat{y}$ of~\eqref{Schloegl}, that is,  \eqref{Tmain.exp} holds for every $(y_0, \widehat{y}_0) \in W^{1,2} \times W^{1,2}$.
Then,  for every given  sampling time $\delta>0$, there are numbers $T^* > \delta$, and $\alpha \in (0,1)$,  such that for every  prediction horizon $T \geq T^*$, and every pair of  $(y_0, \widehat{y}_0) \in W^{1,2} \times W^{1,2}$,  the receding horizon control $u_{rh}$ given by  Algorithm~\ref{RHA} satisfies the suboptimality inequality
\begin{equation}
\notag
\alpha V_{\infty}(y_0) \leq \alpha J_{\infty}(u_{rh};t_0,y_0,\widehat{y}) \leq V_{\infty}(y_0),
\end{equation}
 and provides the asymptotic behavior $\norm{y_{rh}(t)-\widehat y(t)}{L^2(\Omega)}\to 0$ as $t \to +\infty$.
}
\end{theorem}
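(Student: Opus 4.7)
The plan is to adapt the relaxed dynamic programming framework of~\cite{AzmiKun2019}, originally devised for linear non-autonomous problems, to the present nonlinear setting with control constraints. The central ingredient is to use the saturated feedback~$\overline\clK_M$ from Theorem~\ref{T:mainSchloegl} as an admissible comparison control in order to obtain a uniform-in-time upper bound on the finite-horizon value function.

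First I would prove the cornerstone estimate
\begin{equation*}
V_T(t_0,\bar y_0)\le C_V\norm{\bar y_0-\widehat y(t_0)}{L^2}^2,
\end{equation*}
with~$C_V$ independent of~$t_0\ge 0$ and~$T>\delta$. This is obtained by inserting the feedback~\eqref{Behcont}, started from~$\bar y_0$ at time~$t_0$ and tracking~$\widehat y$, as a competitor. The exponential decay~\eqref{Tmain.exp} yields an integrable state cost, while the two-phase structure of the saturated control---bounded by~$C_u$ during a transient of duration~$\le(\mu^2 D)^{-1/2}$ (Lemma~\ref{L:expdec.largez0}) and coinciding with the exponentially decaying linear feedback~$-\lambda(U_M^\diamond)^{-1}P_{\clU_M}(y-\widehat y)$ afterwards---bounds the input cost by a constant multiple of~$\norm{\bar y_0-\widehat y(t_0)}{L^2}^2$. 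Uniformity in~$t_0$ is crucial, and stems from the fact that the constants~$M_*,\lambda_*,C_u^*,D$ in Theorem~\ref{T:mainSchloegl} and Lemma~\ref{L:expdec.largez0} depend only on~$(\mu,\zeta,\nu)$, not on the targeted trajectory~$\widehat y$.

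Next I would combine the dynamic programming identity
\begin{equation*}
V_T(t_0,\bar y_0)=\int_{t_0}^{t_0+\delta}(\norm{y_T^*-\widehat y}{L^2}^2+\beta\norm{u_T^*}{\bbR^{M_\sigma}}^2)\,\rmd t+V_{T-\delta}(t_0+\delta,y_T^*(t_0+\delta))
\end{equation*}
with a tail estimate controlling the gap between~$V_{T-\delta}$ and~$V_T$. Applying the cornerstone estimate after the prediction horizon and using that any admissible trajectory may be extended past~$t_0+T$ by the saturated feedback, the gap is bounded by a quantity of order~$\rme^{-2\mu(T-\delta)}\norm{\bar y_0-\widehat y(t_0)}{L^2}^2$. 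Telescoping over the sampling grid~$t_k=k\delta$ and selecting~$T^*>\delta$ so large that this exponential tail is dominated by the stage cost~$\int_{t_k}^{t_{k+1}}(\norm{y_{rh}-\widehat y}{L^2}^2+\beta\norm{u_{rh}}{\bbR^{M_\sigma}}^2)\,\rmd t$ yields the existence of some~$\alpha\in(0,1)$ with
\begin{equation*}
\alpha\int_0^\infty(\norm{y_{rh}-\widehat y}{L^2}^2+\beta\norm{u_{rh}}{\bbR^{M_\sigma}}^2)\,\rmd t\le V_T(0,y_0)\le V_\infty(y_0),
\end{equation*}
which is the stated suboptimality.

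The main technical hurdle I expect is securing the~$t_0$-uniformity of the constants above, because the error dynamics~\eqref{sys-z-intro} is non-autonomous through the~$\widehat y$-dependent coefficients of the polynomial~$f^{\widehat y}(z)=z^3+(3\widehat y+\xi_2)z^2+(3\widehat y^2+2\xi_2\widehat y+\xi_1)z$, and~$\widehat y(t)$ need not be bounded uniformly in~$t$. I would circumvent this by working directly with the difference dynamics and invoking the absolute dissipation mechanism of Lemma~\ref{L:expdec.largez0}, which produces an absorbing ball~$\{\norm{z}{L^2}\le D\}$ whose radius depends only on~$(\mu,\zeta)$, so that after a uniformly bounded transient the cubic damping in~\eqref{dtz0} absorbs the~$\widehat y$-dependent quadratic terms and yields~$\widehat y$-independent a priori estimates. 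Finally, the asymptotic convergence~$\norm{y_{rh}(t)-\widehat y(t)}{L^2}\to 0$ follows from the finiteness of~$J_\infty(u_{rh})$ combined with the uniform continuity of~$t\mapsto\norm{y_{rh}(t)-\widehat y(t)}{L^2}^2$, the latter being a consequence of the bound~\eqref{dtz0} applied along the closed-loop trajectory.
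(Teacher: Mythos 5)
Your proposal follows essentially the same route as the paper: a uniform-in-$t_0$ upper bound on the finite-horizon value function obtained by inserting the saturated feedback of Theorem~\ref{T:mainSchloegl} (applied to the time-shifted error dynamics) as a competitor, followed by the relaxed dynamic programming / concatenation machinery of~\cite{AzmiKun2019} to produce $\alpha\in(0,1)$ and $T^*$, and finally a Barbalat-type argument combining $z_{rh}\in L^2(\bbR_+,L^2)$ with a uniform-continuity (in the paper, a one-sided H\"older) estimate on $t\mapsto\norm{z_{rh}(t)}{L^2}^2$ derived from~\eqref{dtz0}. Your cornerstone estimate is precisely the paper's Property {\bf P1}, with $\gamma_2(T)=\tfrac{(1+\beta\bar C)}{2\mu}(1-\rme^{-2\mu T})$, and your observation that the constants are independent of $t_0$ and of $\widehat y$ is exactly the point the paper makes when justifying the time-shifted application of the feedback law.

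There is, however, one concrete omission. The dynamic programming identity you invoke, and indeed the very definition of the receding horizon control in Algorithm~\ref{RHA} (step~2 requires an optimal pair $(y_T^*,u_T^*)$), presuppose that each finite-horizon constrained problem~\eqref{finite-OP} admits a minimizer. This is the paper's Property {\bf P2}, and it is not automatic here: one must take a minimizing sequence in the closed, bounded, convex admissible set $\mathcal{U}_{ad}$, extract weak limits $(u^*,z^*)$ using the energy estimate~\eqref{energ}, and then pass to the limit in the cubic nonlinearity $f^{\widehat y}(z^n)$. The last step requires the Lipschitz-type bound~\eqref{nonEs} on $f^{\widehat y}(z^n)-f^{\widehat y}(z^*)$ in terms of $\norm{z^n-z^*}{L^2(I_{t_0}^T,W^{1,2})}$ together with the compactness of the embedding $W(I_{t_0}^T,W^{2,2},L^2)\hookrightarrow L^2(I_{t_0}^T,W^{1,2})$, so that weak convergence of $z^n$ upgrades to strong convergence of the nonlinear term, and then weak lower semicontinuity of $J_T^D$ in $u$. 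Without this step the infima defining $V_T$ need not be attained and the telescoping argument over the sampling grid has nothing to telescope; you should add it to complete the proof.
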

\begin{proof}
 We consider the dynamics of the error $z:= y-\widehat{y}$, see~\eqref{sys-z-intro}, with a general control~$u$ in the time interval~$I_{t_0}^T=(t_0,t_0+T)$, with~$t_0\ge0$ and $T \in (0,+\infty]$,
\begin{subequations}\label{sys-z-rhc}
\begin{align}
&\tfrac{\p}{\p t} z -\nu\Delta z +f^{\widehat y}(z)\!=\! U_M^\diamond u,\; \tfrac{\p}{\p\bfn} z\rest{\p\Omega}\!=\!0,\; z(t_0)\!=\! \bar{z}_0,\label{sys-eq-rhc}\\ &\dnorm{u(t)}{}\le C_u,   
 \label{sys-eq-contc}
\end{align}
 \end{subequations}
where~$f^{\widehat y}(z)=z^3+(3\widehat y+\xi_2) z^2+(3\widehat y^2+2\xi_2\widehat y+\xi_1) z$.   In this way,  for the solution $z = z(t_0,\bar z, u)$ of~\eqref{sys-z-rhc},  $\beta >0$,  we define the following performance index
\begin{equation}
\label{Obj}
J^D_{T}{(u; t_0,  \bar{z}_0)}\coloneqq\norm{z}{L^2({ I_{t_0}^T},{L^2}
)}^2+\beta\norm{u}{L^2({I_{t_0}^T},\bbR^{M_\sigma})}^2.
\end{equation}
Then,  we can also define the following value functions
\begin{align}
\overline{V}_{\infty}(z_0)&:=
\inf_{ u \in L^2(\mathbb{R}_+,\mathbb{R}^{M_{\sigma}})}\{J^D_ {\infty}({u;0,z_0)} \text{ subject to~\eqref{sys-z-rhc}}\},\notag \\
\overline{V}_T{(t_0,z_0)}&:=
\inf_{ u \in L^2(I_{t_0}^T,\mathbb{R}^{M_{\sigma}})}\{J^D_ {T}{(u;t_0,z_0)} \text{ subject to~\eqref{sys-z-rhc}}\},\notag
\end{align}
 (for given initial pairs $(0, z_0)$ and~$(t_0, z_0)$, respectively).
 Next, for a given triple~$(t_0,T,\bar z_0)$, we define the  open-loop problem
\begin{equation}
\label{optD}
\min_{u \in  L^2(I_{t_0}^T,\bbR^{M_\sigma} )}   J^D_T(u;t_0,\bar{z}_0) \mbox{ subject to ~\eqref{sys-z-rhc}},
\end{equation}
and write Algorithm~\ref{RHA2}. By induction, it can be easily  be shown  that both of Algorithms~\ref{RHA} and~\ref{RHA2}
\begin{algorithm}[htbp]
\caption{RHC2($\delta,T$)}\label{RHA2}
\begin{algorithmic}[1]
\REQUIRE{sampling time  $\delta>0$, prediction horizon $T> \delta$,   initial state $y_0 \in W^{1,2}$, targeted trajectory $\widehat{y}$ solving~\eqref{sys-haty}}
\ENSURE{ Receding horizon control  ~$u_{rh} \in L^2(\mathbb{R}_0,  \mathbb{R}^{M_{\sigma}})$.}
\STATE Set~$t_0=0$ and~$\bar{z}_0: =y_0-\widehat{y}_0$;
\STATE Find~$(z_T^*(\cdot;t_0,\bar{z}_0),u^*_T(\cdot;t_0,\bar{y}_0))$
for time in~$(t_0,t_0+T)$ by solving the  open-loop problem~\eqref{optD};
\STATE For all $\tau \in [t_0,t_0+\delta)$,  set $u_{rh}(\tau)=u_T^*(\cdot;t_0,\bar{y}_0)$;
\STATE Update: $\bar{z}_0\leftarrow z_T^*(t_0+\delta;t_0,\bar{z}_0),$;
\STATE Update: $t_0 \leftarrow t_0 +\delta$;
\STATE Go to step $2$;
\end{algorithmic}
\end{algorithm} deliver the same receding horizon control $u_{rh}$.  Further for $z_0 = y_0-\widehat{y}_0$,  it can be seen that  $V_{\infty}(y_0) = \overline{V}_{\infty}(z_0)$,  $V_{T}(0,y_0) = \overline{V}_{T}(0,z_0)$,  and $J^D_{\infty}(u_{rh};0,z_0) =J_{\infty}(u_{rh};0,y_0)$.  Thus we can restrict ourselves  to show that the following suboptimality inequalities and asymptotic behavior hold for Algorithm~\ref{RHA2},
\begin{subequations}
\begin{align}
\label{subOD}
&\alpha \overline{V}_{\infty}(z_0) \leq \alpha J^D_{\infty}(u_{rh};0,z_0) \leq \overline{V}_{\infty}(z_0),\\
\label{asymRHC}
&\norm{z_{rh}(t)}{L^2}\to 0 \text{ as }t \to +\infty.
\end{align}
\end{subequations}

\textbf{Verification of~\eqref{subOD}:}  Since the stabilizability result for the time-varying  system~\eqref{sys-z-rhc} holds globally,   the proof follows with similar arguments given in \cite[Thm.~2.6]{AzmiKun2019}.  Therefore, we omit the proof here and restrict ourselves to the verification of Properties  {\bf P1}--{\bf P2} in  \cite[Thm.~2.6]{AzmiKun2019} as follows.

\noindent
{\bf P1}: {\em There is a continuous,  nondecreasing,  and bounded function $\gamma_2: \mathbb{R}_+ \to \mathbb{R}_+$ such that, for all  $(t_0,z_0)\in \mathbb{R}_+\times W^{1,2}$,
\begin{equation}
\label{e7}
\overline{V}_T(t_0,z_0)  \leq  \gamma_2(T)|z_0|^2_{L^2}.
\end{equation}}
First, we show that for every given $(t_0,z_0)$  there exist a control~$\hat u \in L^2((t_0,+\infty), \mathbb{R}^{M_{\sigma}})$, satisfying~\eqref{sys-eq-contc},  for which the solution of~\eqref{sys-eq-rhc}
 satisfies
 \begin{align}\label{goal.parab-MPC-exp}
\norm{z(t)}{L^2}\le \rme^{-\mu (t-t_0)}\norm{y(t_0)}{L^2} \mbox{ for } t\ge t_0.
\end{align}
This follows  by applying the control  law~\eqref{Behcont}, with~$(\mu, \lambda, C_u)$ given by Theorem~\ref{T:mainSchloegl}, to  the following time-shifted system  with $ \bar z (s) :=z(s+t_0 )$,  $\bar{\widehat{y}}(s) := \widehat{y}(t_0+s)$,   for $s \geq 0$,
\begin{align}
&\tfrac{\p}{\p s} \bar{z} -\nu\Delta \bar{z} +f^{\bar{\widehat{y}}}(\bar{z})= U_M^\diamond\bar{u},\quad \tfrac{\p}{\p\bfn}\bar{z}\rest{\p\Omega}=0,\quad
 \bar{z}(0,\Bigcdot)= z_0,\notag
 \end{align}
with  $\bar{\widehat{y}}$  the solution of the free dynamics~\eqref{sys-haty} with forcing function  $\bar{h}: = h(\cdot + t_0)$  and initial function $\widehat{y}(t_0)$ in place of $h$ and $\widehat{y}_0$,  respectively.   Since $h \in  L^2_{\rm loc}(\bbR_+,L^2(\Omega))$ and $\hat{y}_0 \in W^{1,2}$ it follows that  $\bar{h} \in L^2_{\rm loc}(\bbR_+,L^2(\Omega))$ and $\widehat{y}(t_0) \in W^{1,2}$ and,  thus,  the saturated control $ U_M^\diamond\bar{u}=  \sum\limits_{i=1}^{M_\sigma} \bar{u}_i\indf_{\omega^M_{i}}:=U_{M}^\diamond\fkP^{\dnorm{\Bigcdot}{}}_{C_u}\left(-\lambda (U_{M}^\diamond)^{-1}P_{\clU_M}\bar{z}\right)$ is exponentially stabilizing.  Therefore, ~\eqref{goal.parab-MPC-exp} holds for  $\hat{u}(t):=\bar{u}(t-t_0) $ for $t\geq t_0$,  where  $\lambda$, $C_u$,  and $M$ are independent of $t_0$ and $z_0$.    Evaluating~\eqref{Obj}  at  $u = \hat{u}$ we obtain
\begin{align}
\overline{V}_T(t_0,z_0)& \le J^D_T( \hat{u}; t_0,z_0) \le  \tfrac{(1+\beta\bar{C} ) }{2\mu}(1-e^{-2\mu T})\norm{z_0}{L^2}^2\notag\\
&\eqqcolon \gamma_2(T)\norm{z_0}{L^2}^2,\label{estVal}
\end{align}
where $ \bar{C}$ depends on $\clK$,  $C_u$,   and  $U_{M}^\diamond$; see ~\eqref{rad.proj}.

\noindent
\underline{\bf P2}: {\em For every $(t_0, \bar{z}_0) \in \overline{\mathbb{R}}_+ \times W^{1,2}$,  every finite horizon optimal control problem of the form ~\eqref{optD} admits a solution:} We use a standard argument from calculus of variations.  Since the set of admissible controls
\begin{equation}\notag
\mathcal{U}_{ad}:=\{ u \in L^2( I_{t_0}^T, \mathbb{R}^{M_{\sigma}}) :  \| u(t) \| \leq C_u \},
\end{equation}
is nonempty and  $J^D_T$ is nonnegative,   we can select an admissible minimizing sequence $\{(u^n,z^n)\}_{n} \in L^2( I_{t_0}^T, \mathbb{R}^{M_{\sigma}})   \times W((0,T),W^{2,2},L^2) $ satisfying $J^D_T(u^n;t_0, \bar{z}_0)\to \overline{V}_T(t_0, \bar{z}_0)$.  Due to the fact that ~$ \mathcal{U}_{ad}$ is a closed bounded and convex subset of~$L^2( I_{t_0}^T, \mathbb{R}^{M_{\sigma}})$ and using the energy estimate
\begin{equation}
\label{energ}
\norm{z}{W(I_{t_0}^T,W^{2,2},L^2)} \leq C_z\left(\norm{\bar{z}_0}{W^{1,2}}+\norm{u}{L^2(I_{t_0}^T, \mathbb{R}^{M_{\sigma}})} \right),
\end{equation}
with $C_z>0$   for ~\eqref{sys-eq-rhc},  which is justified by similar arguments given in section~\ref{sS:wellPose},  we can infer that there exists a weakly convergent subsequence, still denoted by~$\{(u^n,z^n)\}_{n}$, so that
\begin{equation}
 \label{weaklim}
\begin{split}
 z^n    \xrightharpoonup[W( I_{t_0}^T, W^{2,2},L^2)]{}z^*  \quad  \text{ and } \quad
 u^n \xrightharpoonup[L^2( I_{t_0}^T, \mathbb{R}^{M_{\sigma}})]{} u^*.
 \end{split}
 \end{equation}
We verify that $z^*$  is the strong solution of~\eqref{sys-eq-rhc} corresponding  to  $u^*$.  To show this, we need to pass the limit in the variational formulation for~\eqref{sys-eq-rhc}.   From~\eqref{weaklim} we find that
 \begin{equation}\notag
 \left(\tfrac{\p}{\p t} z^n,\Delta z^n, U_M^\diamond u^n \right) \xrightharpoonup[\left(L^2( I_{t_0}^T, L^2)\right)^3]{} \left(\tfrac{\p}{\p t} z^*,\Delta z^*,  U_M^\diamond u^* \right).
 \end{equation}
It remains to show that $f^{\widehat y}(z^n)\xrightharpoonup[L^2( I_{t_0}^T, L^2)]{} f^{\widehat y}(z^*)$.  Writing~$ \delta_z^n\coloneqq z^n- z^*$ and~$\delta_f^n\coloneqq f^{\widehat y}(z^n)-f^{\widehat y}(z^*)$, we find
\begin{subequations}\label{nonlin-exist}
\begin{align}
\norm{ \delta_f^n}{L^2}
&\le\norm{(z^n)^3-(z^*)^3}{L^2}+\norm{\widehat f(z^n)-\widehat f(z^*)}{L^2} \notag\\
&\hspace{0em}\le \norm{(z^n)^3-(z^*)^3}{L^2}+\norm{(3\widehat y+\xi_2) ((z^n)^2- (z^*)^2)}{L^2}\notag\\
&\quad+\norm{(3\widehat y^2+2\xi_2\widehat y+\xi_1) \delta_z^n}{L^2}
\end{align}
and,  estimating separately the terms on the right-hand side,
\begin{align}
&\norm{(z^n)^3-(z^*)^3}{L^2}
\le
\norm{ (z^n)^2+z^n z^{*}+  (z^*)^2}{L^3}\norm{\delta_z^n}{L^6}\notag\\
&\hspace{0em}\le\varPsi_1\norm{\delta_z^n}{L^6},\mbox{ with }
\varPsi_1:=\tfrac32\left(
\norm{z^n}{L^6}^2+\norm{z^*}{L^6}^2\right);\\
&\norm{(3\widehat y+\xi_2) ((z^n)^2- (z^*)^2)}{L^2}\notag\\
&\hspace{0em}
\le\norm{(3\widehat y+\xi_2) (z^n+ z^*)}{L^3}\norm{\delta_z^n}{L^6}
\le\varPsi_2\norm{ z^n- z^*}{L^6}\notag\\
&\hspace{3em}\mbox{with }
\varPsi_2:=\norm{3\widehat y+\xi_2}{L^6} \left(\norm{z^n}{L^6}+\norm{z^*}{L^6}\right);\\
&\norm{(3\widehat y^2+2\xi_2\widehat y+\xi_1) \delta_z^n}{L^2}
\le\norm{3\widehat y^2+2\xi_2\widehat y+\xi_1}{L^3}\norm{\delta_z^n}{L^6}\notag\\
&\hspace{0em}\le\varPsi_3\norm{\delta_z^n}{L^6},
\mbox{ with }
\varPsi_3:=\left(3\norm{\widehat y}{L^6}^2\!+\!\norm{2\xi_2\widehat y+\xi_1}{L^3}\right).
\end{align}
\end{subequations}
Now,  since the terms $\norm{z^*}{W( I_{t_0}^T,W^{2,2},L^2)}$,  $\norm{\widehat{y}}{W( I_{t_0}^T,W^{2,2},L^2)}$,  and  $\{ \norm{z^n} {W( I_{t_0}^T,W^{2,2},L^2)}\}_{n}$  are bounded and since  $W( I_{t_0}^T,W^{2,2},L^2) \hookrightarrow L^{\infty}( I_{t_0}^T,W^{1,2})$,   we have
\[   \varPsi_i(t) \leq C_{ \varPsi} \quad  \text { for all }  t \in  I_{t_0}^T  \text{ and }  i \in \{ 1,2,3 \}   \]
Therefore,  from~\eqref{nonlin-exist}, we can conclude
\begin{equation}
\label{nonEs}
\norm{\delta_ f^n}{L^2( I_{t_0}^T, L^2)}^2 \leq 9C^2_{\varPsi}\norm{\Id}{\clL(W^{1,2},L^6)}^2\norm{\delta_z^n}{L^2( I_{t_0}^T;W^{1,2})}^2.
\end{equation}
Using~\eqref{weaklim}, \eqref{nonEs},  and the fact that the embedding $W( I_{t_0}^T,W^{2,2},L^2) \hookrightarrow L^{2}( I_{t_0}^T,W^{1,2})$ is compact,  we conclude that $f^{\widehat y}(z^n)\xrightarrow[L^2( I_{t_0}^T, L^2)]{} f^{\widehat y}(z^*)$ and,  as a consequence,  $z^*$ is the strong solution associated to $u^*$.

Finally,  since  $z^n    \xrightarrow[L^2( I_{t_0}^T,L^2)]{}z^*$  and $J^D_T$ is a convex and continuous mapping in $u$, it is weakly lower semi-continuous and,  thus, we obtain that
\[ J_T^D(u^*; t_0,\bar{z}_0)  \leq \liminf_{n \to \infty} J_T^D(u^n; t_0,\bar{z}_0)
= \overline{V}_T(t_0, \bar{z}_0).
\]
Hence,  $(z^*,u^*)$   is optimal.
The rest of proof follows as in the proof of  \cite[Thm.~2.6]{AzmiKun2019}  by using  a dissipation inequality for the finite-horizon value function $\overline{V}_T$.   It is derived by applying~\eqref{e7} for all initial pairs $(t_i, z_{rh}(t_i))$ with $i>1$,  where $t_i= \delta+ t_{i-1}$ and $z_{rh}(t_i)=z^*_T(t_i; t_{i-1},z_{rh}(t_{i-1}))$.  Therefore it is essential that $z_{th}(t_i) \in W^{1,2}$ for all $i\geq1$.   This fact can be justified using induction and estimate~\eqref{energ} repeatedly.

\textbf{Verification of~\eqref{asymRHC}}.  Using~\eqref{subOD} and~\eqref{estVal}, for $t_0= 0$ and $T \to \infty$,  we obtain
\begin{equation}\notag
\norm{z_{rh}}{L^2(\mathbb{R}_0, L^2)}^2+ \beta\norm{u_{rh}}{L^2(\mathbb{R}_0, \mathbb{R}^{M_{\sigma}})}^2\!  \leq\! \tfrac{\overline{V}_{\infty}(z_0)}\alpha\!  \leq\!  \tfrac{(1+\beta\bar{C} ) }{2\alpha\mu}\norm{z_0}{L^2}^2.
\end{equation}
Therefore,  there exists a constants $C_J$ such that
\begin{equation}
\label{bouForRh}
\max\left\{\norm{z_{rh}}{L^2(\mathbb{R}_+, L^2)}^2,\norm{u_{rh}}{L^2(\mathbb{R}_+, \mathbb{R}^{M_{\sigma}})}^2\right\} \leq C_J \norm{z_0}{L^2}^2.
\end{equation}
Further,  similarly to the estimate~\eqref{dtz0},  we can find
\begin{align}\notag
 \tfrac{\rmd}{\rmd t} \norm{z_{rh}}{L^2}^2&\le-\norm{z_{rh}}{V} ^2+D_0\norm{z_{rh}}{L^2}^2+2(z_{rh},   U_M^\diamond u_{rh})_{L^2},
 \end{align}
 where $D_0 := \tfrac{128}{15}\norm{\xi_2}{\bbR}^2+\widehat C+2$  was defined for~\eqref{dtz0}.
  After time integration over the interval~$(s,t)$, with~$t>s$, we obtain
  \begin{align}
   &\Xi(t,s)\coloneqq \norm{z_{rh}(t)}{L^2}^2-\norm{z_{rh}(s)}{L^2}^2\label{estAsy}\\
&\quad\le D_0 \int^t_{s} \norm{z_{rh}}{L^2}^2\,\rmd t+  2C_{B}\int^t_{s} \norm{z_{rh}}{L^2}\norm{ u_{rh}}{\mathbb{R}_{M_{\sigma}}}\,\rmd t,\notag 
\end{align}
where $C_{B}>0$ depends only on $U_M$.  Using ~\eqref{estAsy} with $s=0$, ~\eqref{bouForRh},  and Young's inequality we obtain
\begin{equation}
\label{LinfEs}
\norm{z_{rh}}{L^{\infty}(\mathbb{R}_0, L^2)} \leq C_{inf} \norm{z_0}{L^2},
\end{equation}
for some $C_{inf}>0$.
From~\eqref{estAsy}, with~$Y\coloneqq L^2((s,t), L^2)$,
\begin{align}
\Xi(t,s) &\le D_0  \norm{z_{rh}}{Y}^2+  2C_{B} \norm{z_{rh}}{Y} \norm{u_{rh}}{L^2((s,t), \mathbb{R}_{M_{\sigma}})}\notag\\
& \le C_h (t-s)^{\frac{1}{2}}\|z_0\|^2_{L^2}\label{EsHol}
\end{align}
 for every $t \geq s$, where  $C_h:=C_{inf}(D_0+2C_B)C_J$,  and ~\eqref{bouForRh}  and ~\eqref{LinfEs} were used.
The rest of proof follows  the same lines as in  the proof of  \cite[Thm.~6.4]{AzmiKun2019}  based on~\eqref{EsHol}  and~\eqref{bouForRh}.
\end{proof}

%%%%%%%%%%%%%%%%%%%%%%%%%%%
%%%%%%%%%%%%%%%%%%%%%%%%%%%
 \section{Numerical simulations}\label{S:simulations}
We present the results of numerical simulations showing the stabilizing performance of the proposed saturated feedback. We compute the targeted trajectory~$\widehat y$, solving~\eqref{Schloegl},
\begin{align}
 &\tfrac{\p}{\p t} \widehat y -\nu\Delta \widehat y+f(\widehat y)= h, \quad \tfrac{\p}{\p\bfn}y\rest{\p\Omega}=0,\quad \widehat y(0)=\widehat y_0,\notag
 \end{align}
where~$f(w)\coloneqq(w- \zeta_1)(w- \zeta_2)(w- \zeta_3)$,
and the controlled trajectory~$y$ solving~\eqref{Schloegl-feed},
\begin{align}
 &\tfrac{\p}{\p t} y -\nu\Delta y+f(y)= h+U_M^\diamond\overline\clK_M(y-\widehat y),\notag\\
&y(0)=y_0,\qquad\tfrac{\p}{\p\bfn}y\rest{\p\Omega}=0,\notag
\intertext{with the saturated feedback control}
 &\overline\clK_M(y-\widehat y)=  \fkP^{\dnorm{\Bigcdot}{}}_{C_u}\left(-\lambda (U_{M}^\diamond)^{-1}P_{\clU_M}(y-\widehat y)\right).\notag
 \end{align}
We also report on numerical experiments associated with Algorithm~\ref{RHA} and draw a comparison between the saturated controls and the RHC laws. 
We have chosen the parameters
\begin{align}\notag
&\nu=0.1\quad\mbox{and}\quad (\zeta_1,\zeta_2,\zeta_3)=(-1,0,2).
\end{align}
The spatial domain is the unit square $\Omega=(0,1)\times(0,1)\subset\bbR^2$.  As spatial discretization we have taken a standard piecewise linear finite element approximation, for the triangulations given  in Fig.~\ref{Fig:Mesh}. Most of the simulations correspond to the case~$M_\sigma=9$ with ${\rm npts}\Omega=3328$ mesh points (degrees of freedom). For the temporal discretization we have taken a standard Crank--Nicolson/Adams--Bashford scheme with stepsize $ k = 10^{-3}$.
For solving the open-loop problems within Algorithm~\ref{RHA},  we employed a projected gradient method to the   associated reduced problems, namely,  we used the iteration
 \begin{equation}\notag
u^{j+1} = P_{\mathcal{U}_{ad}}(u^j -\alpha_j\mathcal{F}'(u^j) ),
\end{equation}
where $\mathcal{F}'$ stands for the gradient of the reduced problem and the  stepsize $\alpha_j$ is computed by a nonmonotone linesearch algorithm which uses the Barzilai--Borwein stepsizes \cite{BB88, AzmiKun2020} corresponding to $ \mathcal{F}$ as the initial trial stepsize, see \cite{AzmiKun2021},  and the references therein for more details.  For every problem,  we used the saturated control with $\lambda = 175$ as the initial iterate.  Further,  the optimization algorithm was  terminated when the norm of the difference corresponding to two successive iterations was less that $10^{-4}$.

We will consider the cases of~$1$, $4$, $9$, and~$16$ actuators, whose locations and supports are illustrated in Fig.~\ref{Fig:Mesh}.

\begin{figure}[ht]
\centering
\subfigure%[]
{\includegraphics[width=0.45\textwidth]{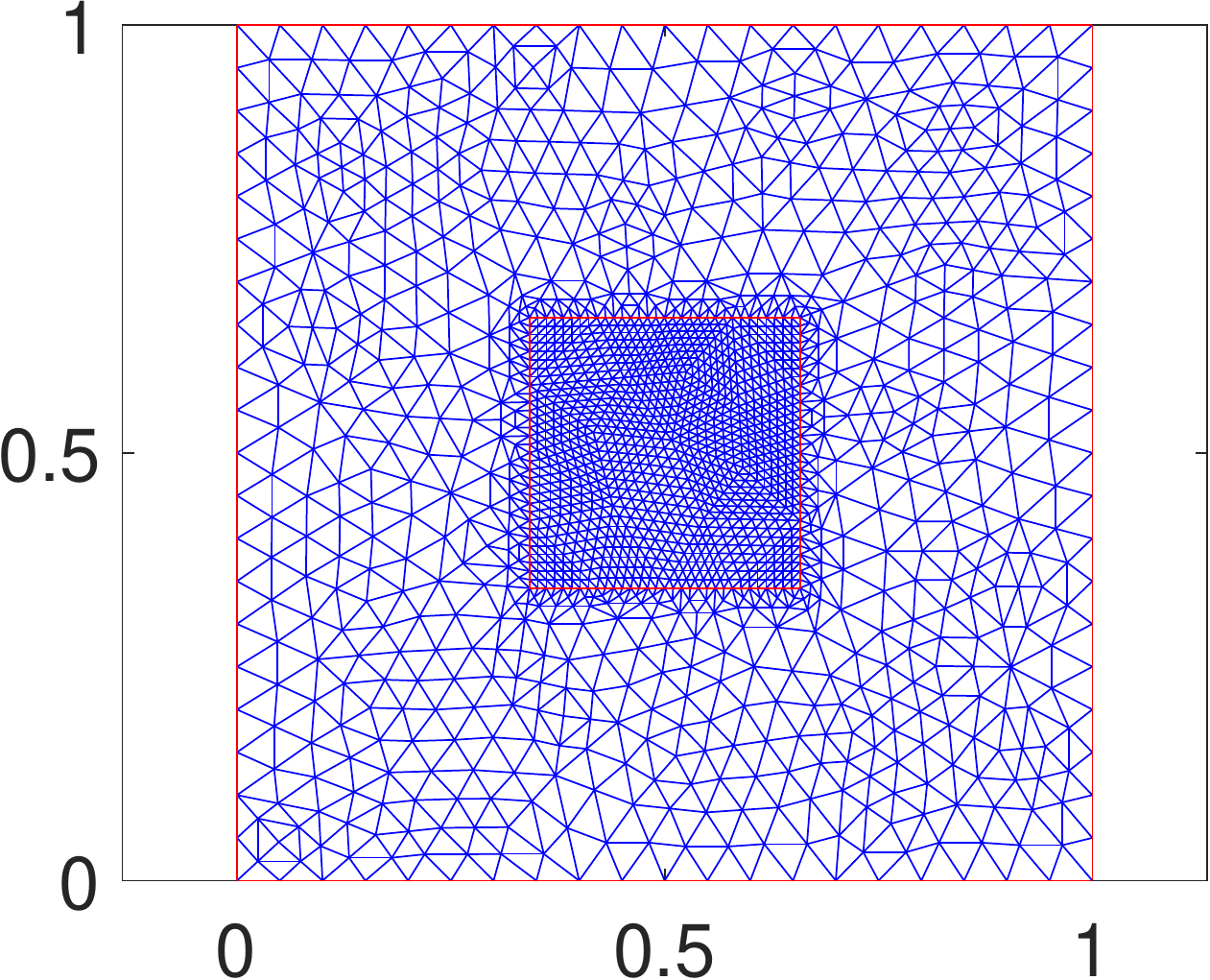}}\quad
\subfigure%[]
{\includegraphics[width=0.45\textwidth]{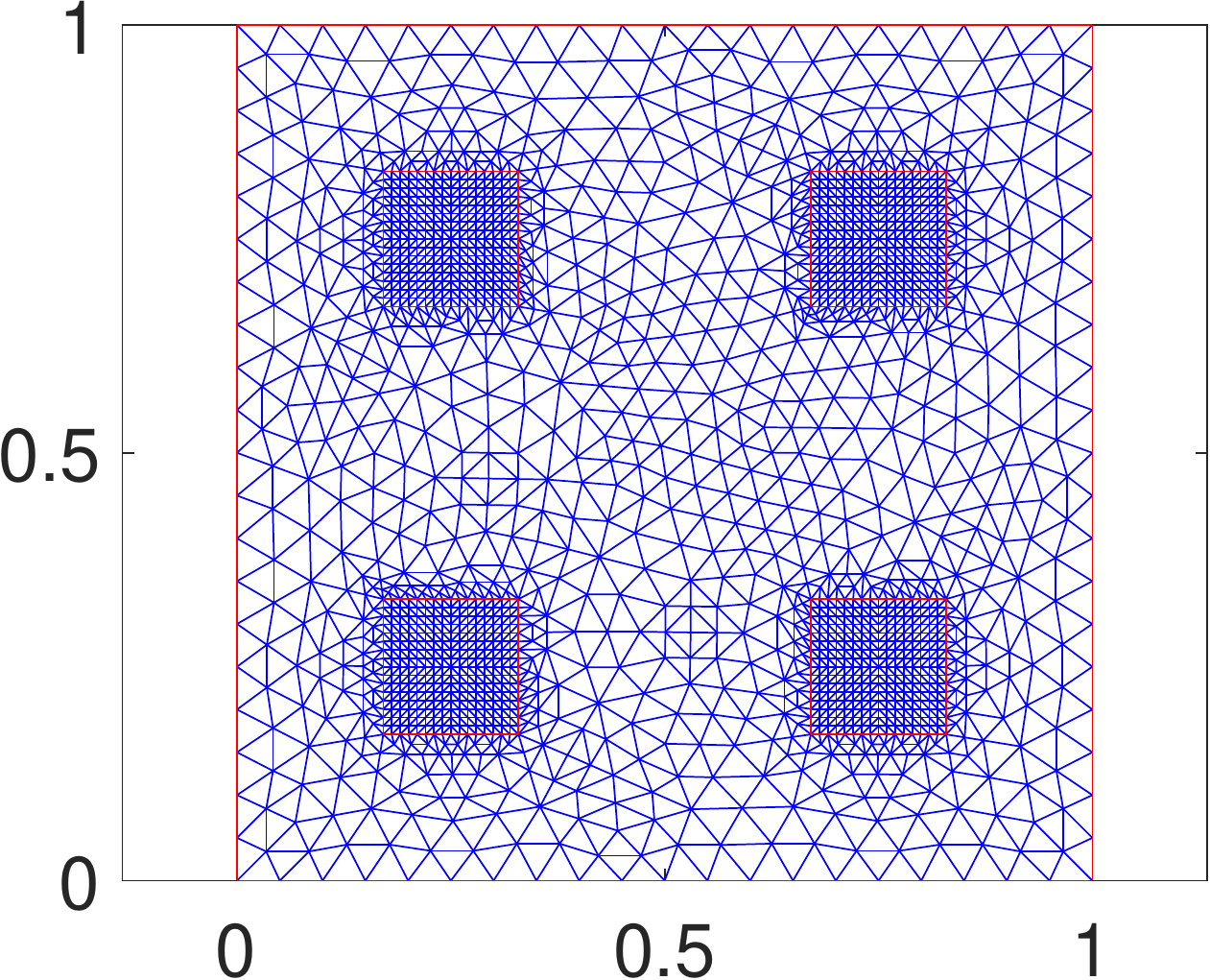}}
\\
\subfigure%[]
{\includegraphics[width=0.45\textwidth]{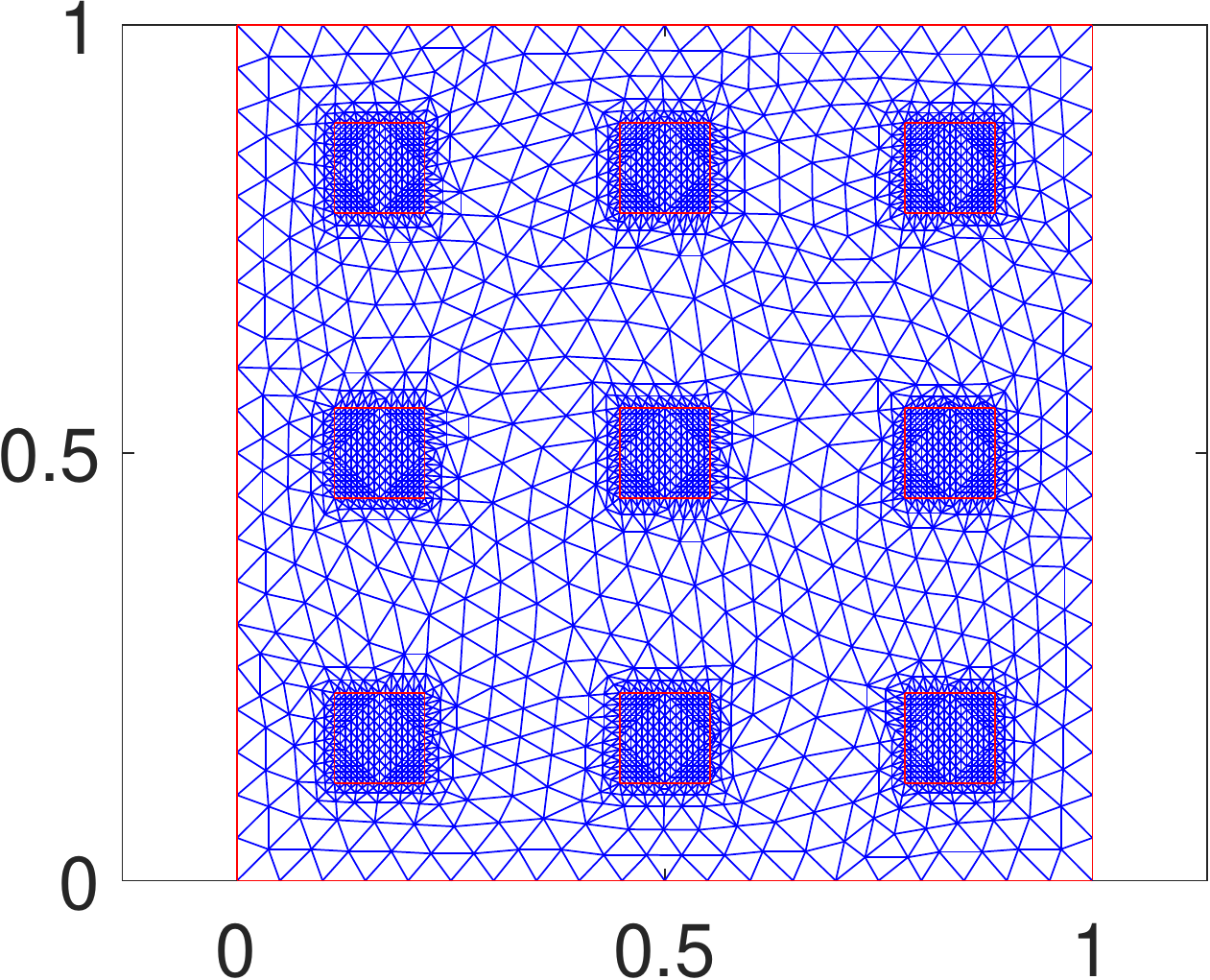}}\quad
\subfigure%[]
{\includegraphics[width=0.45\textwidth]{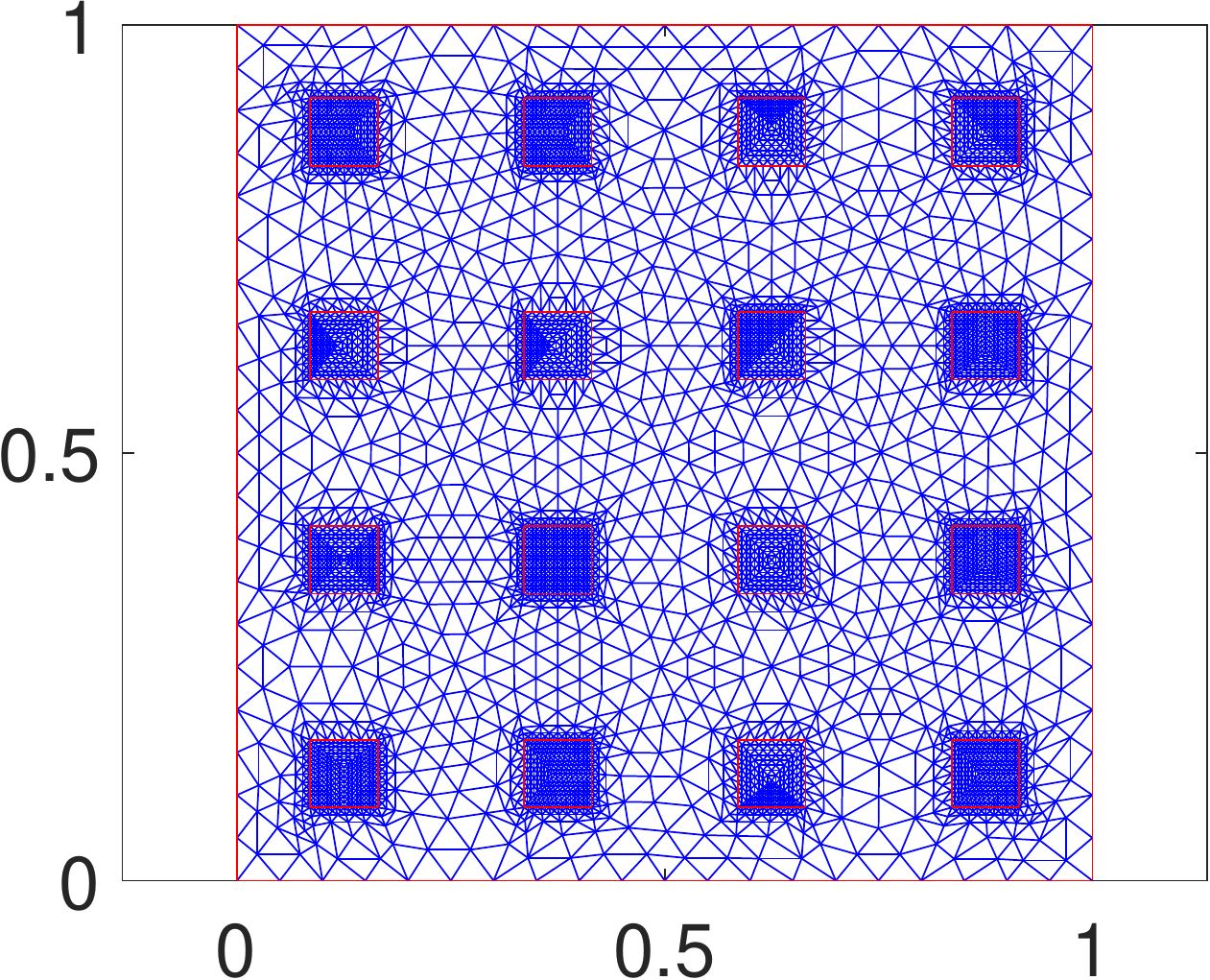}}
\caption{Location of the actuators. Cases~$M_\sigma\in\{1,4, 9,16\}$.}
\label{Fig:Mesh}
\end{figure}

\begin{example}\label{Exa:eq23h0}
We take as initial states the constant functions
\[
\widehat y_0(x)=\zeta_2\quad\mbox{and}\quad
y_0(x)=\zeta_3,
\]
and we take the vanishing external force
\begin{align}\notag
& h(t,x)=0.
\end{align}
Note that~$\widehat y_0(x)$ is an unstable equilibrium and~$y_0(x)$ is a stable equilibrium, for the free dynamics.   Thus,  our goal is to leave a stable state and to approach and stabilize an unstable one.
In Fig.~\ref{Fig:zCu_ic4CuLaInfnef1} we can see that the saturated feedback control is able to stabilize the error dynamics in case ~$C_u \geq \rme^{3.5}$.

\begin{figure}[ht]
\centering
\subfigure%[]
{\includegraphics[width=0.45\textwidth]{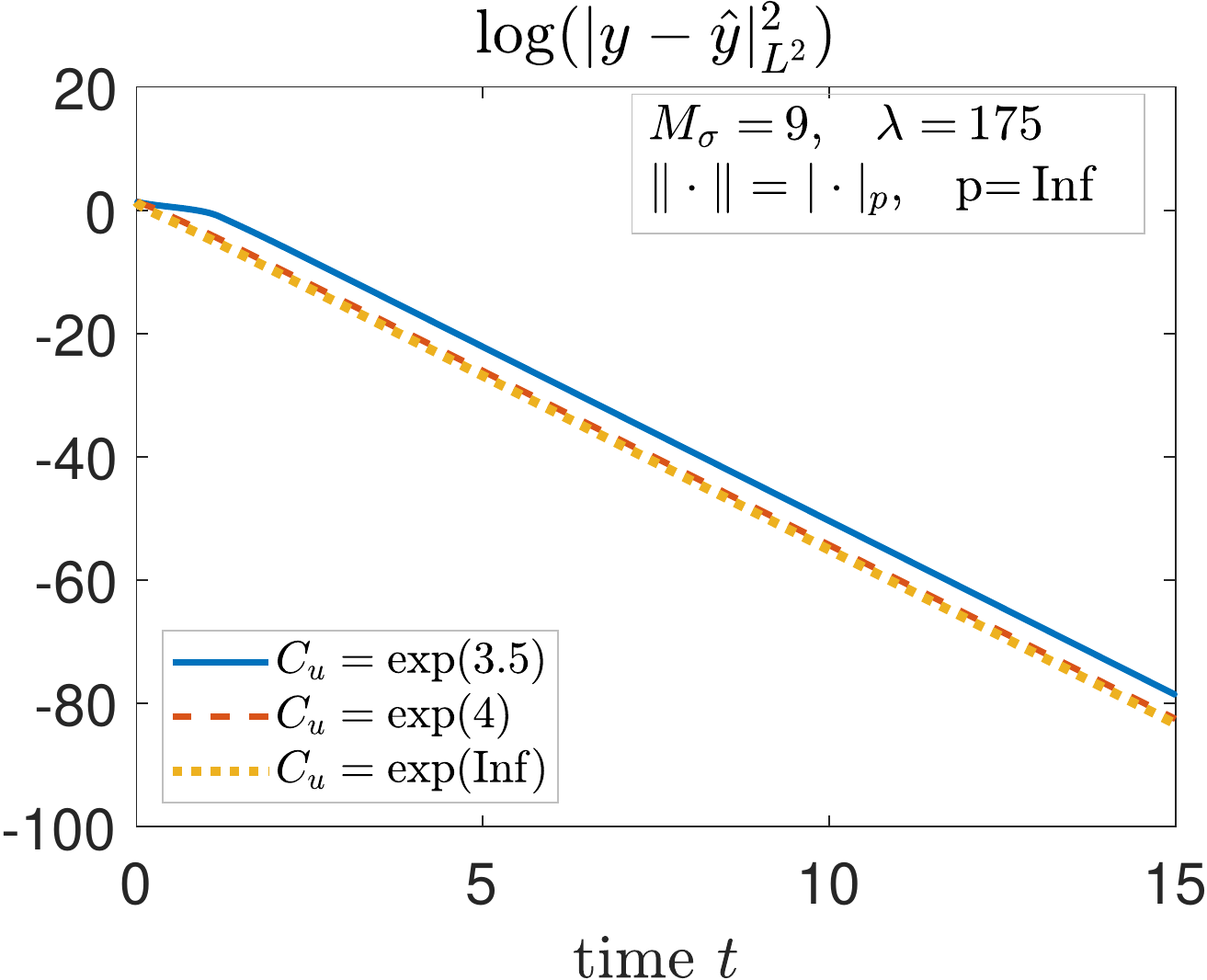}}
\quad
\subfigure%[]
{\includegraphics[width=0.45\textwidth]{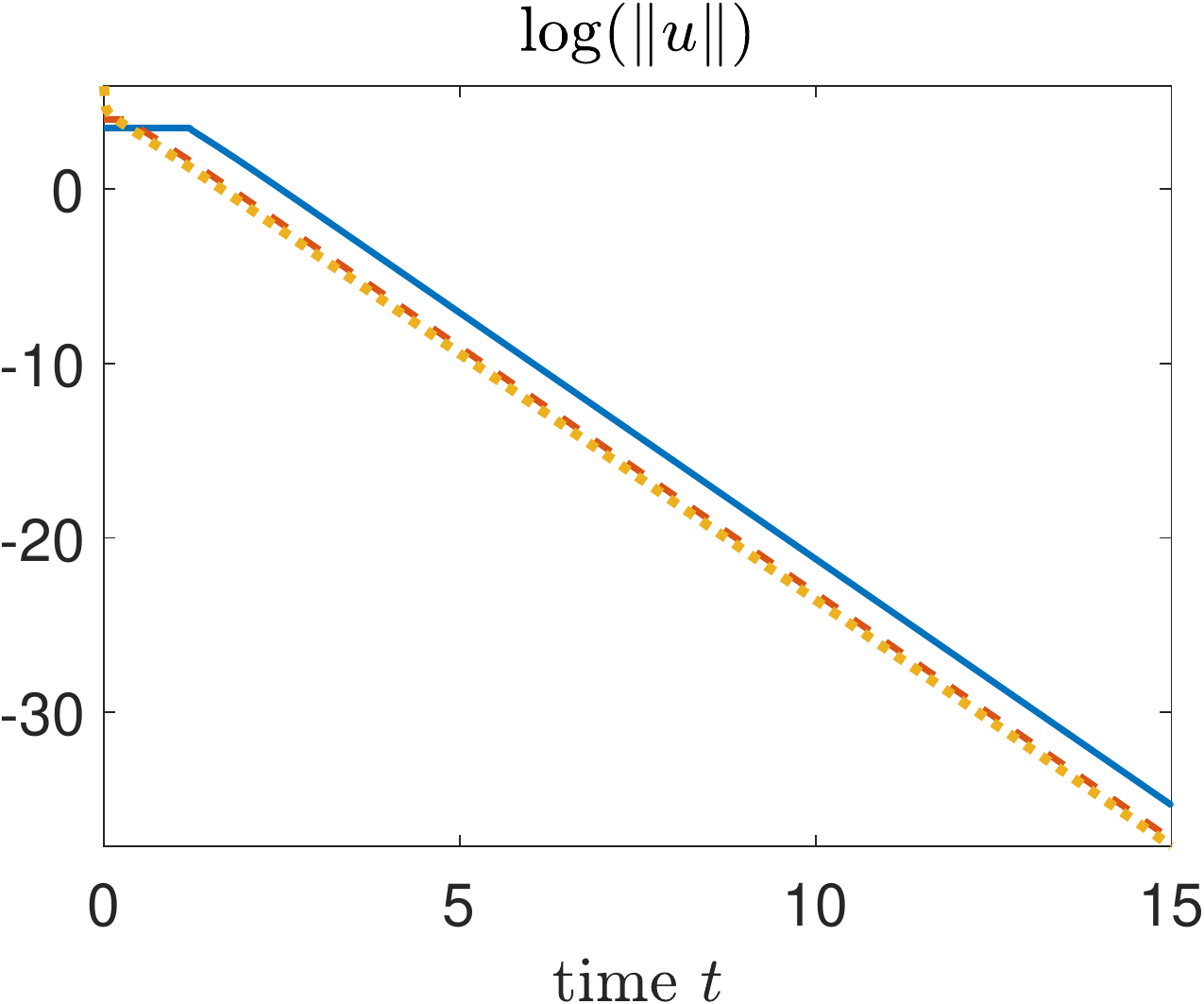}}
\caption{Norms of error and control. Large control constraint. (Ex.~\theexample)}
\label{Fig:zCu_ic4CuLaInfnef1}
\end{figure}

On the other hand,  for magnitudes~$C_u \leq \rme^{1}$, in Fig.~\ref{Fig:zCu_ic4CuSmInfnef1} we can see that the saturated control is not able to stabilize the error dynamics.
\begin{figure}[ht]
\centering
\subfigure%[]
{\includegraphics[width=0.45\textwidth]{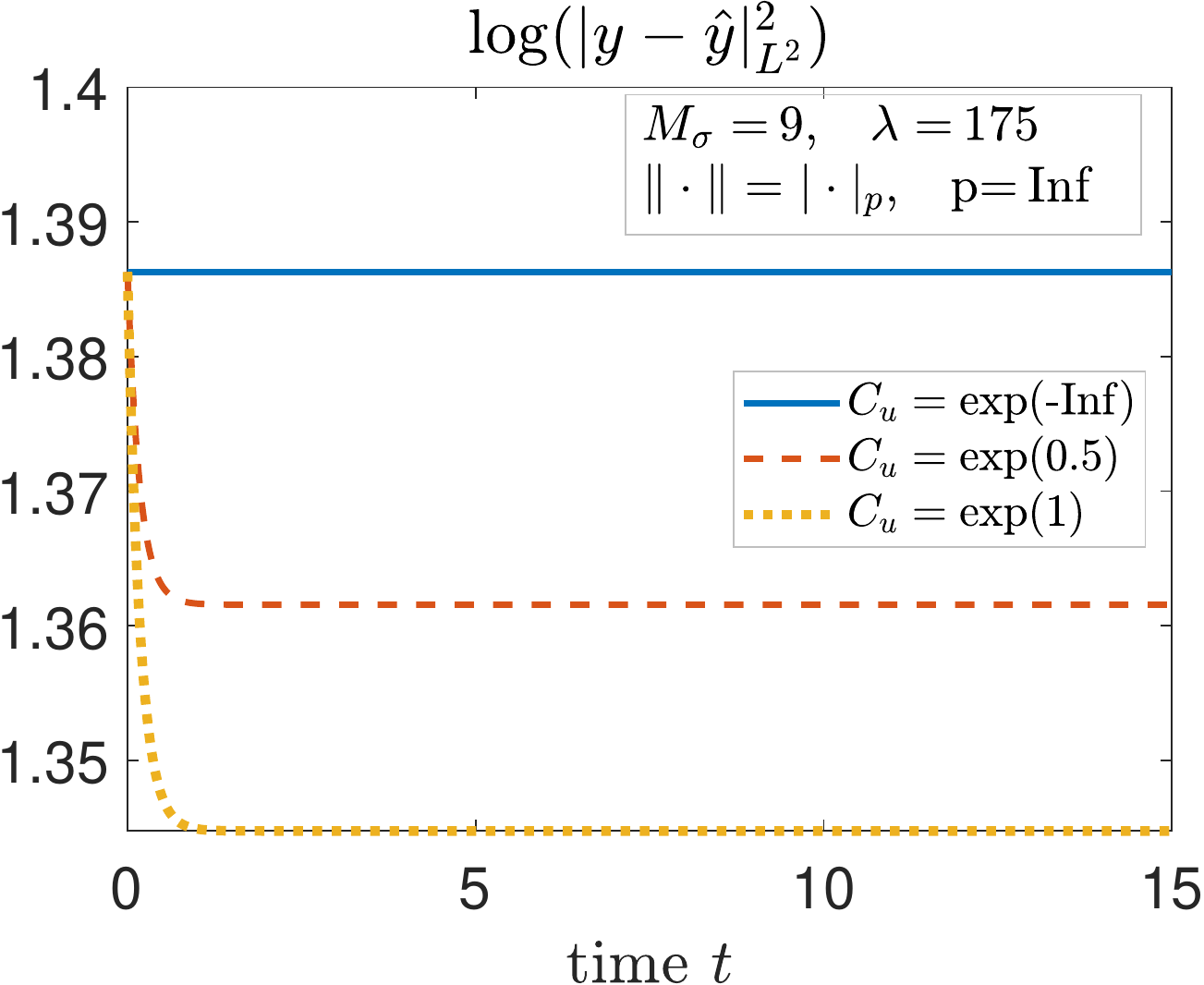}}
\quad
\subfigure%[]
{\includegraphics[width=0.45\textwidth]{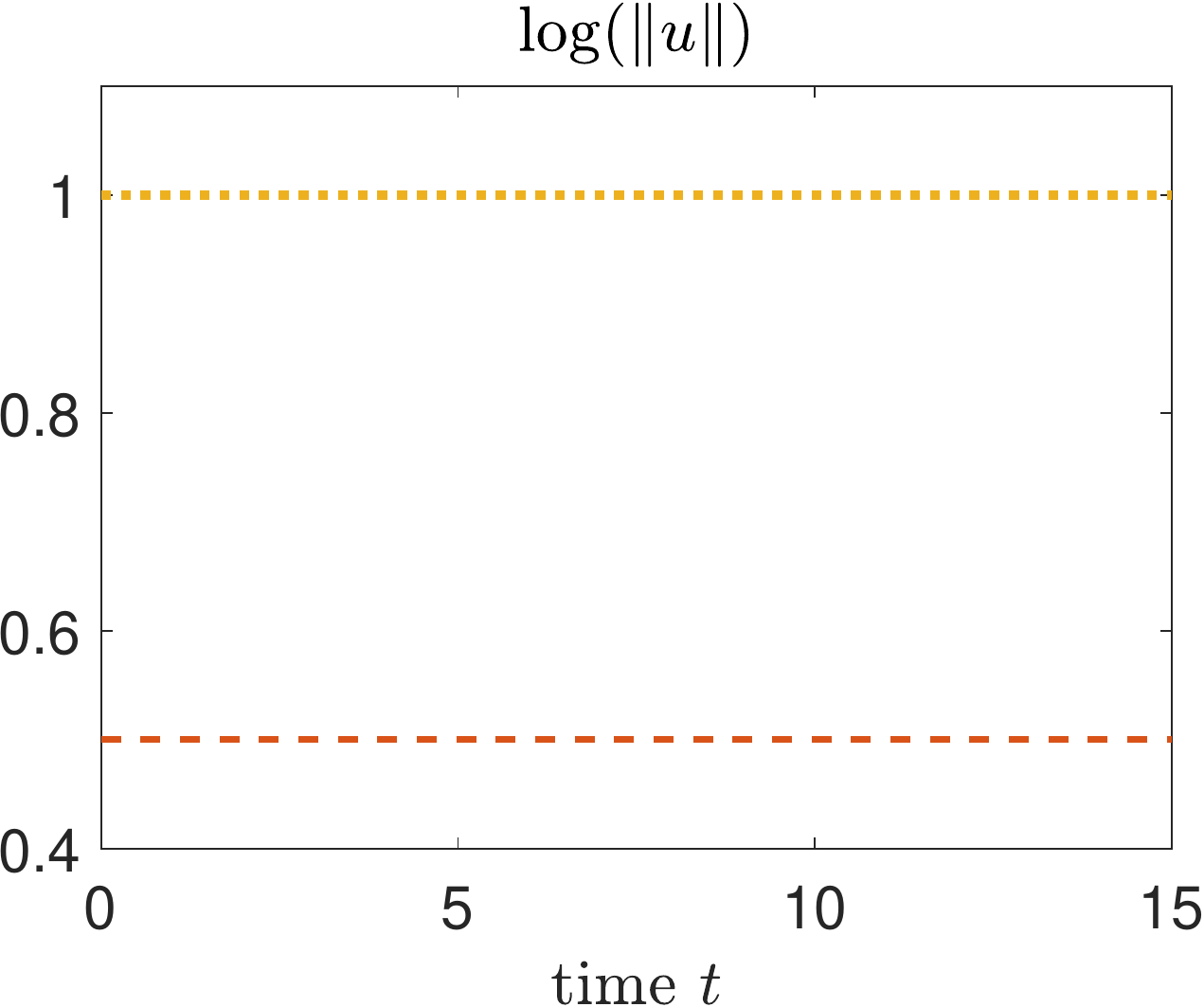}}
\caption{Norms of error and control. Small control constraint. (Ex.~\theexample)}
\label{Fig:zCu_ic4CuSmInfnef1}
\end{figure}

\end{example}

\begin{example}\label{Exa:eq31hper}
 We compare the performance of the control generated by Algorithm~\ref{RHA} (RHC) with that of the explicitly given saturated  control.   For Algorithm ~\ref{RHA},   we set $T = 1.25$ and  $\delta = 0.5$ and run the algorithm until the final computational time $T_{\infty}$  for two control cost parameters $\beta \in\{ 10^{-3}, 10^{-5}\}$.  As initial states we take the constant functions
\[
\widehat y_0(x)=\zeta_3\quad\mbox{and}\quad
y_0(x)=\zeta_1,
\]
and as external force we take the time-periodic function
\begin{align}\label{h=per}
& h(t,x)=\tfrac12\indf_{\{s\ge0\mid\norm{\sin(6s)}{\bbR}>\frac12\}}(t)\indf_{\{w\in\Omega\mid\norm{w}{\bbR}^2<\frac12\}}(x).
\end{align}
The initial states are stable equilibria of the free dynamics in the case of a vanishing external force, $h=0$.
 Here, we are taking a nonzero time-periodic external forcing, which induces the asymptotic periodic-like behavior  shown in Fig.~\ref{Fig:haty_ic5} for the norm of the targeted trajectory~$\widehat y$.
From Figs.~\ref{Fig05}--\ref{FigInf}, we can see that the saturated feedback control is able to stabilize the error dynamics for~$C_u \geq  \rme^{1.5}$,  whereas RHC is stabilizing for $ C_u \geq \rme^{1}$.  This can be seen from Fig.~\ref{Fig1},  where the  saturated control  fails to track~$\widehat{y}$, while  RHC succeeds.  For the case $C_{u} =e^{0.5}$,  it can be seen in Fig.~\ref{Fig05} that,  none of the control laws is able to track~$\widehat y$.
As illustrated in Figs.~\ref{Fig05}--\ref{FigInf},   the saturated control is active for a longer interval compared to RHC.  Further,  observing the plots related to $ \norm{y-\widehat{y}}{L^2((0,T_{\infty}),L^2)}^2$,   we can see that the influence of $\beta$ is only recognizable for $C_u \geq e^2$.

The value of the performance index function
\[  J_{T_{\infty}}(u;0,y_0, \widehat{y}) = \norm{y-\widehat{y}}{L^2((0,T_{\infty}),L^2)}^2+\beta \norm{u}{L^2((0,T_{\infty}),\mathbb{R}^{M_{\sigma}})}^2,  \]
for different control laws is reported in Table~\ref{tableJ}. As we would expect, in all the cases,  RHC delivers better results than the saturated controls concerning the value of $J_{\infty}$.    We can also see that as
$C_{u}$ is getting larger the performance of the saturated control and RHC are getting closer to each other (except the case $C_{u} = e^{\infty}$).

\begin{table}[htpb]
\begin{center}
\scalebox{1.0}{%
\begin{tabular}{ |c||c|c |c| c|c| }
 \hline
\backslashbox{Control}{$(C_u, T_{\infty})$}& $(e^{0.5},25)$  &$(e^{1},20)$&$(e^{1.5},10)$&$(e^{2},7)$&$(e^{\infty},5)$\\
 \hline\hline
 RHC $\Bigl.\beta = 10^{-3}\Bigr.$   & $202.47$   & $90.376$&  $30.238$ &$17.554$&    $25.827$  \\
 SatCon $\lambda =175$& $203.04$& $152.93$& $34.226$ & $17.922$&   $30.787$\\
\hline\hline
 RHC $\Bigl.\beta = 10^{-5}\Bigr.$   & $201.86$    &$89.497$&   $29.415$ &$15.459$&   1.0466  \\
 SatCon $\lambda =175$& $202.47$ &  $151.73$ &$ 33.439$ &$16.729$&   $1.0973$ \\
 \hline
\end{tabular}}
\end{center}
\caption{ The value of $J_{T_{\infty}}$.  (Ex.~\theexample) }
\label{tableJ}
\end{table}

\begin{figure}[ht]
\centering
%\subfigure%[]
{\includegraphics[width=0.45\textwidth]{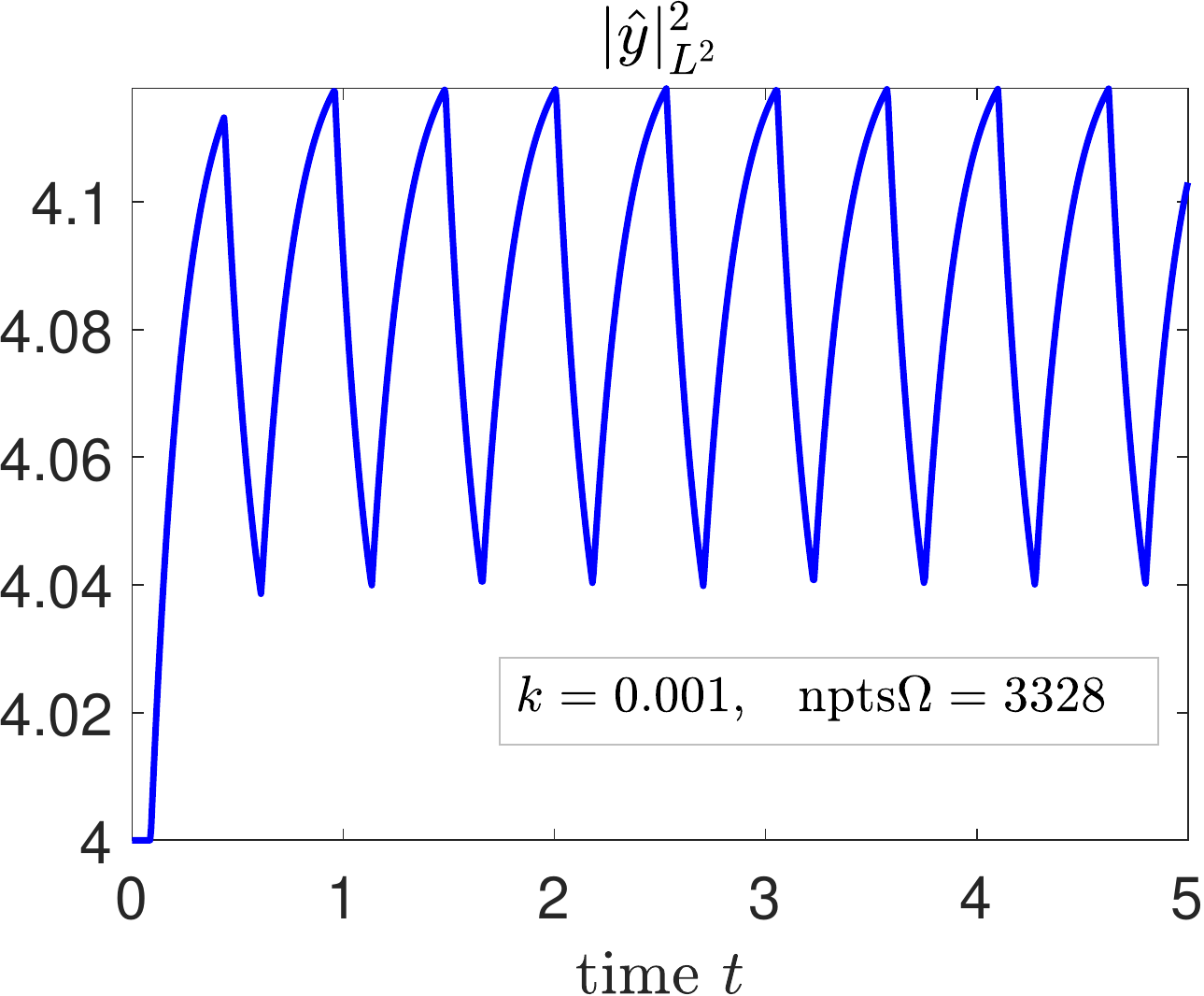}}
\caption{Norm of targeted trajectory. (Ex.~\theexample)}
\label{Fig:haty_ic5}
\end{figure}

Note that for large time the logarithm of the norm of the error~$y-\widehat y$ stays close to a small value, approximately around~$-35$. This can be explained due to the accuracy/precision used in the numerical computations, in fact that value is relatively close to the standard Matlab precision
${\tt eps}\approx10^{-16}$ we have used;  indeed~$\rme^{-35}\approx 6\times10^{-16}$.

Recall that we are also computing the targeted trajectory~$\widehat y$, whose values are then used to compute the controlled trajectory~$y$. We cannot expect that the computational errors associated with the solution of the two systems will cancel each other.  Hence,  the
aforementioned behavior for the norm of~$y-\widehat y$ is consistent.
\begin{figure}[ht]
\centering
\subfigure%[]
{\includegraphics[width=0.45\textwidth]{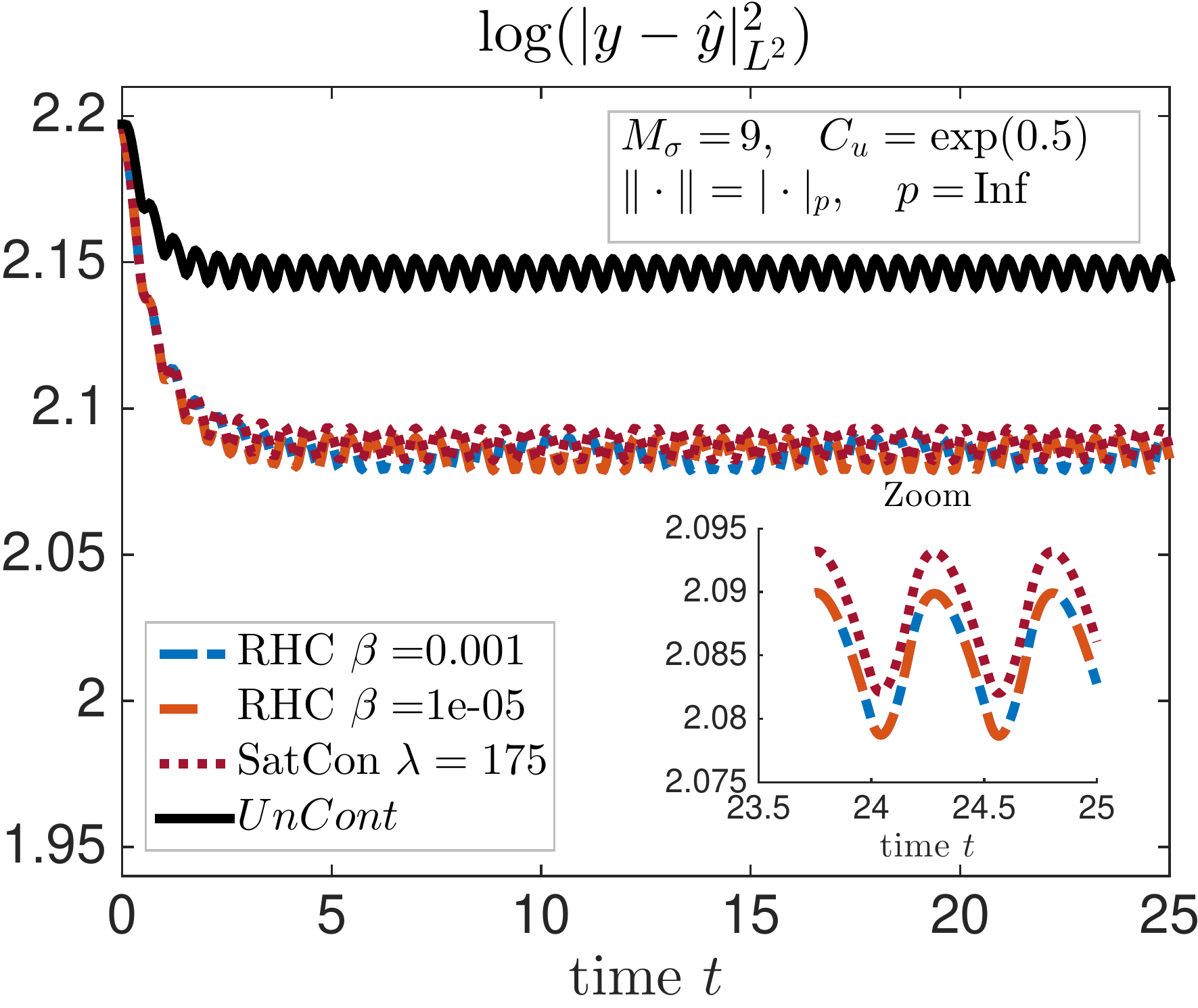}}
\quad
\subfigure%[]
{\includegraphics[width=0.45\textwidth]{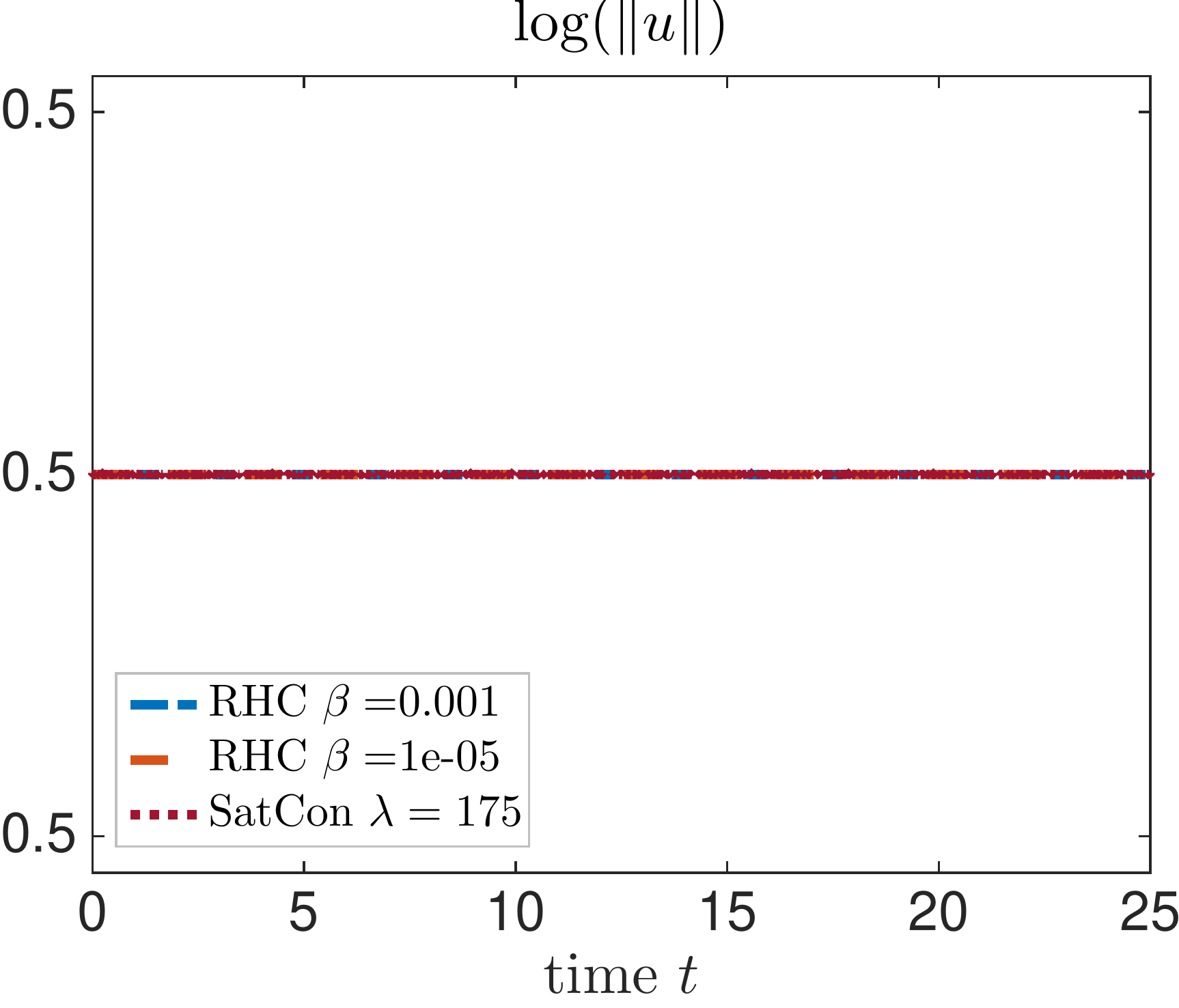}}
\caption{Norms of error and control for $(C_u,  T_{\infty}) =( e^{0.5},  25)$. (Ex.~\theexample)}
\label{Fig05}
\end{figure}

\begin{figure}[ht]
\centering
\subfigure%[]
{\includegraphics[width=0.45\textwidth]{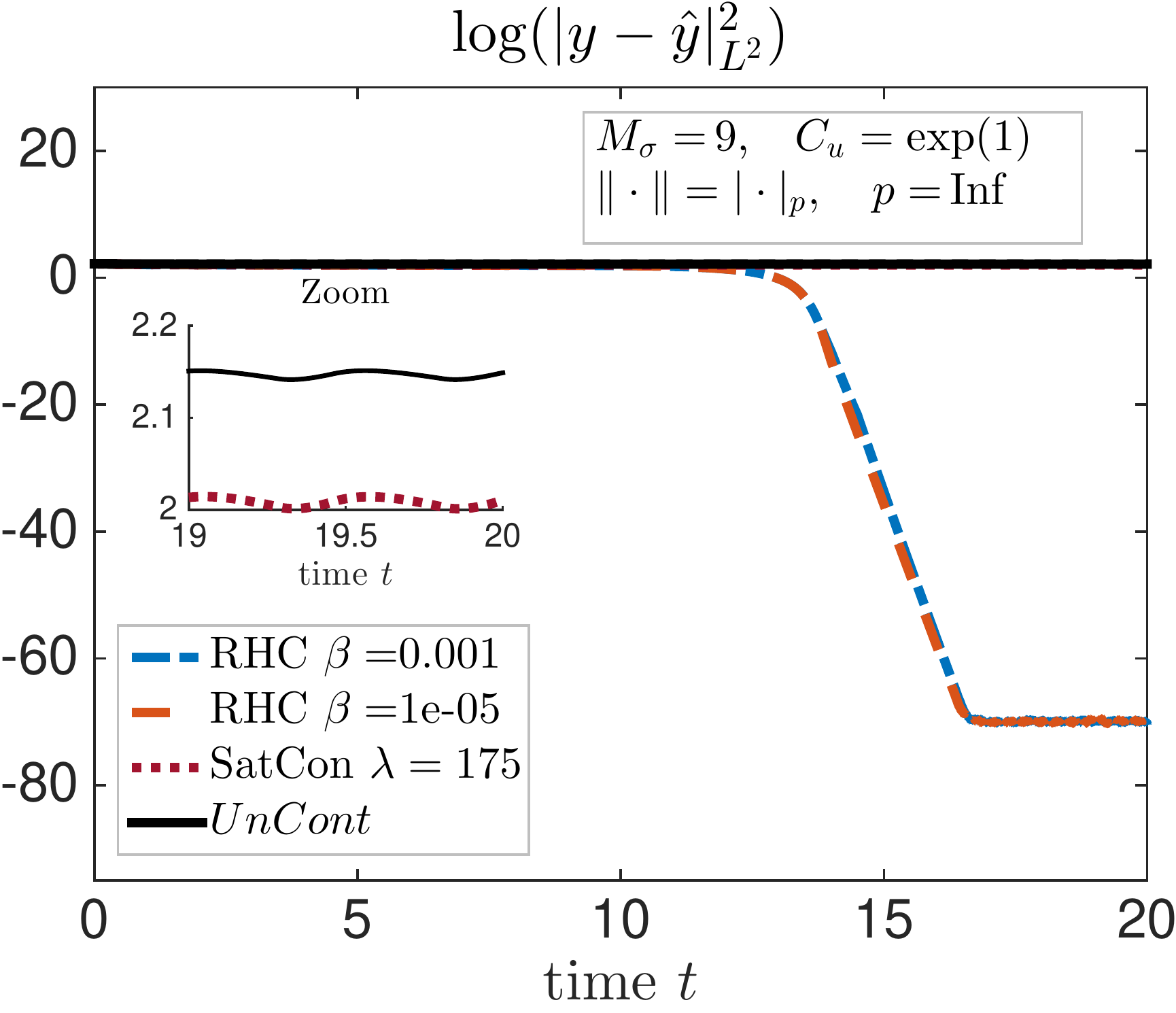}}
\quad
\subfigure%[]
{\includegraphics[width=0.45\textwidth]{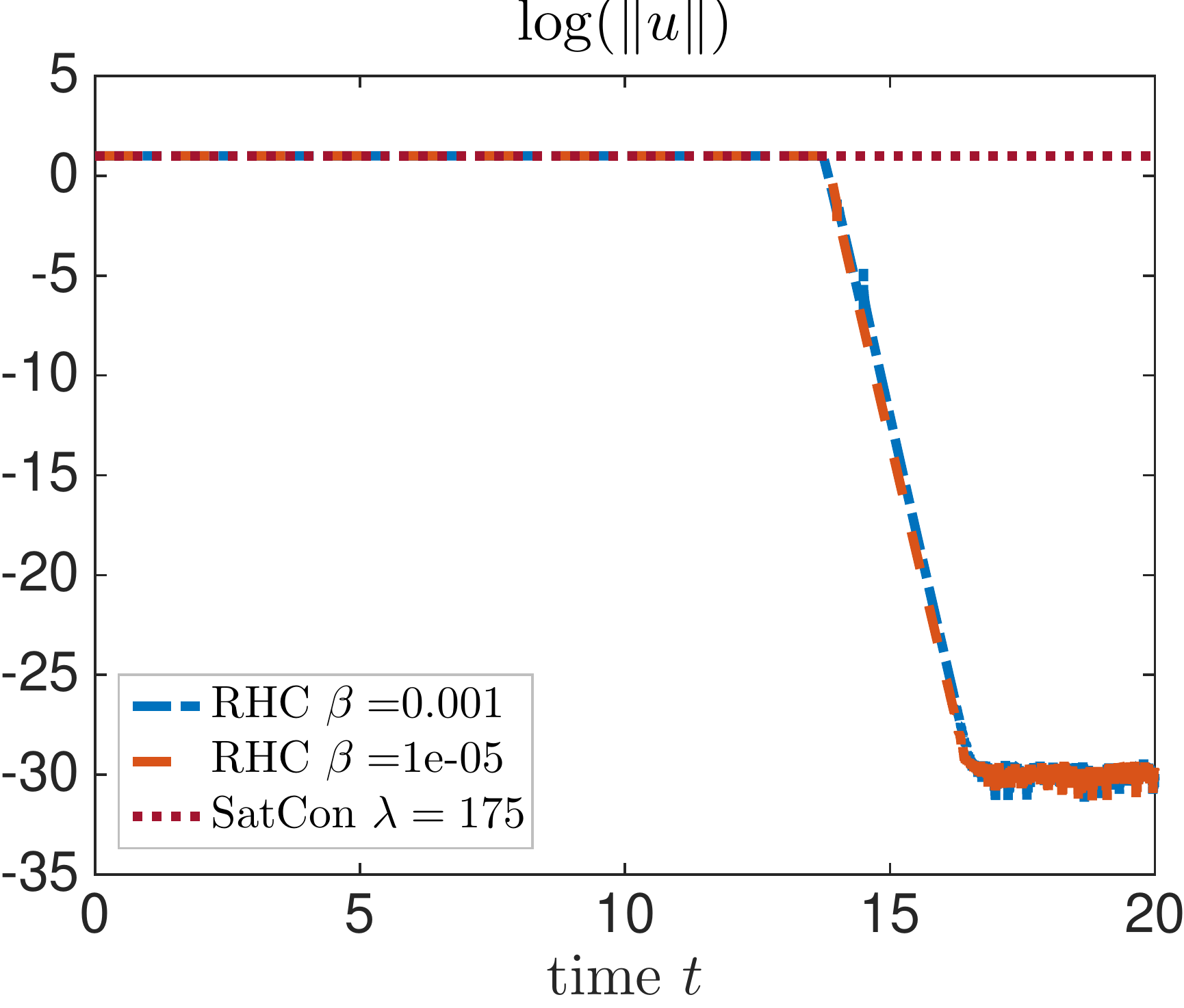}}
\caption{Norms of error and control for $(C_u,  T_{\infty}) =( e^{1},  20)$. (Ex.~\theexample)}
\label{Fig1}
\end{figure}

\begin{figure}[ht]
\centering
\subfigure%[]
{\includegraphics[width=0.45\textwidth]{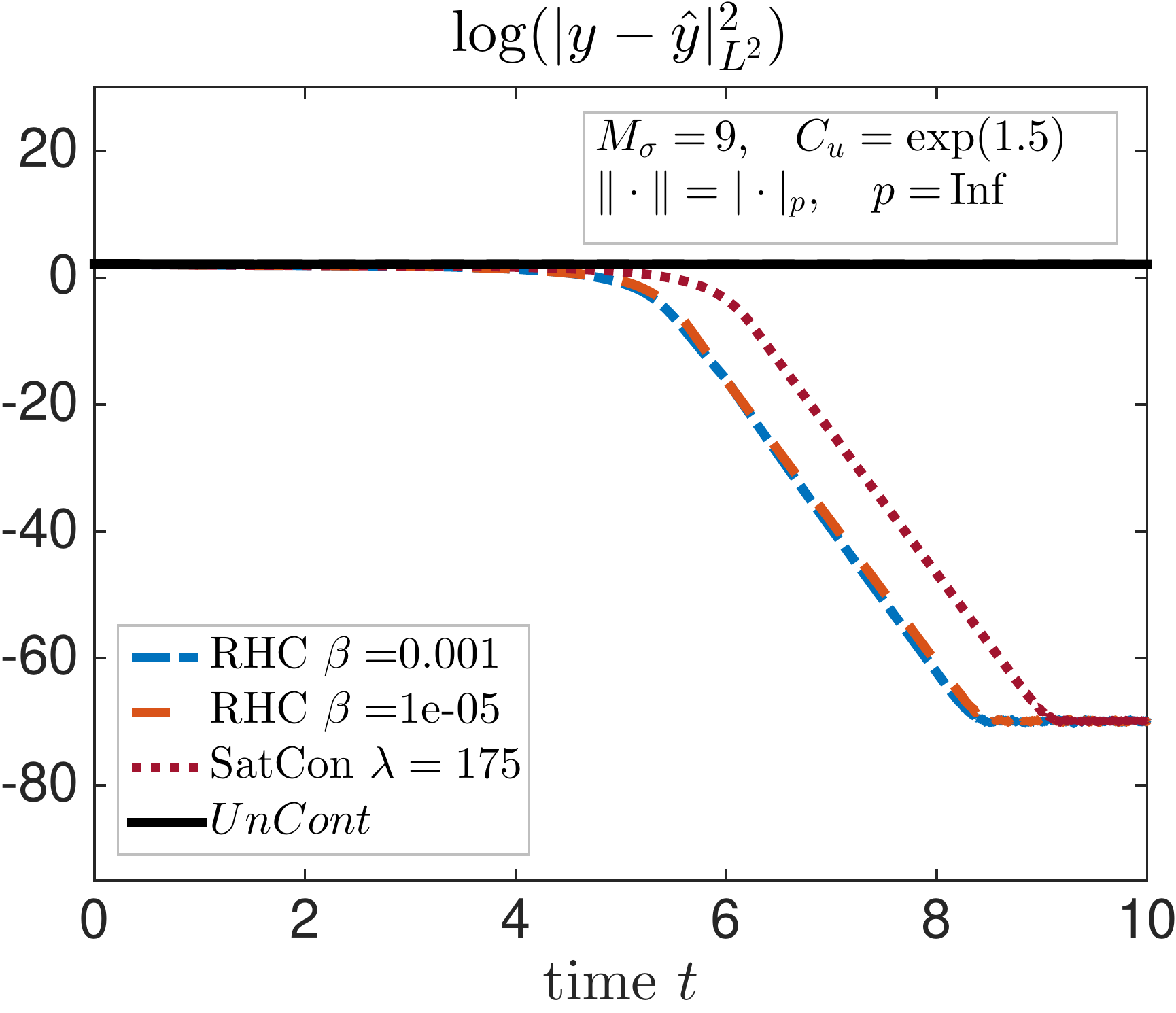}}
\quad
\subfigure%[]
{\includegraphics[width=0.45\textwidth]{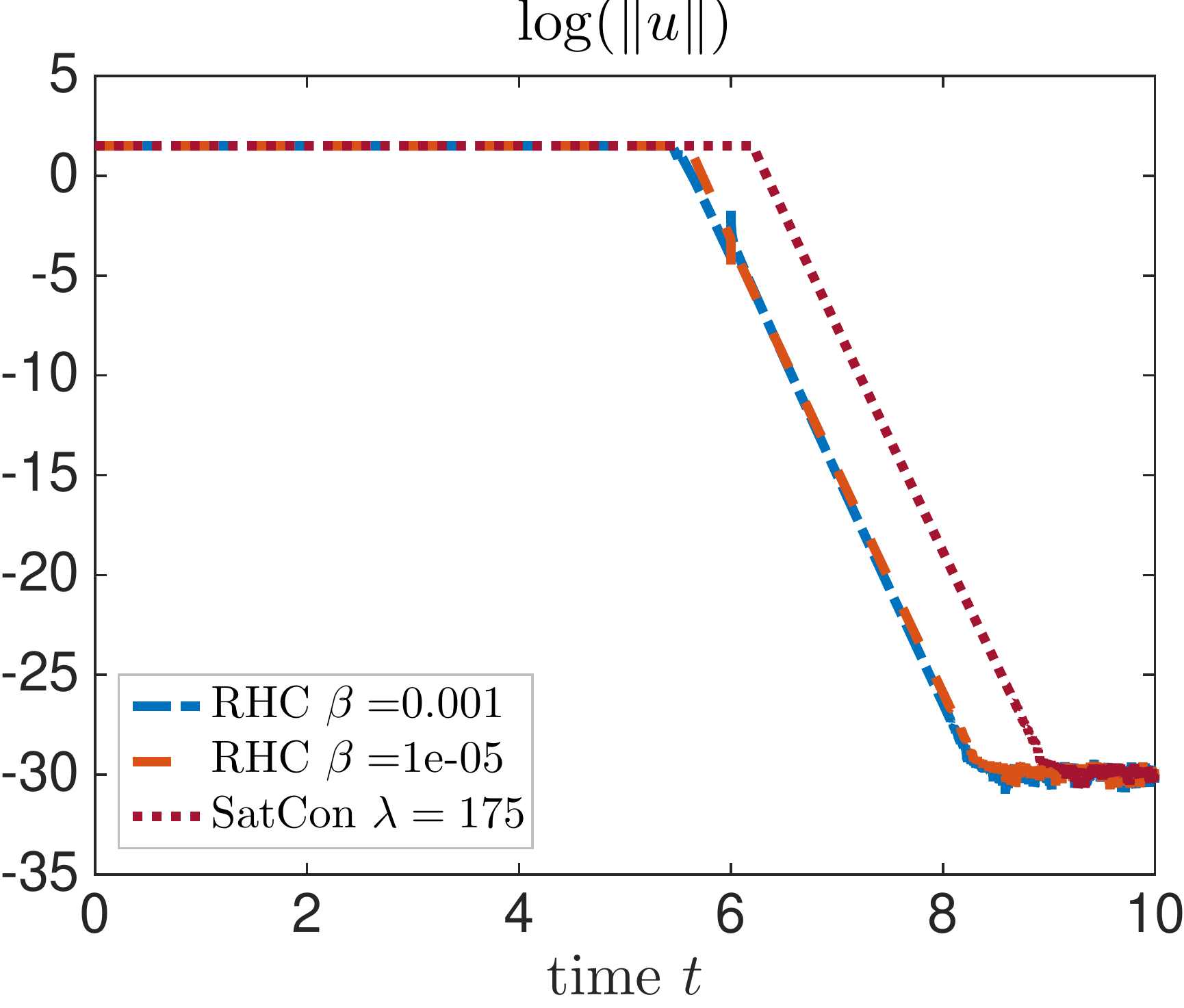}}
\caption{Norms of error and control. $(C_u,  T_{\infty}) =( e^{1.5},  10)$. (Ex.~\theexample)}
\label{Fig15}
\end{figure}
\end{example}

\begin{figure}[ht]
\centering
\subfigure%[]
{\includegraphics[width=0.45\textwidth]{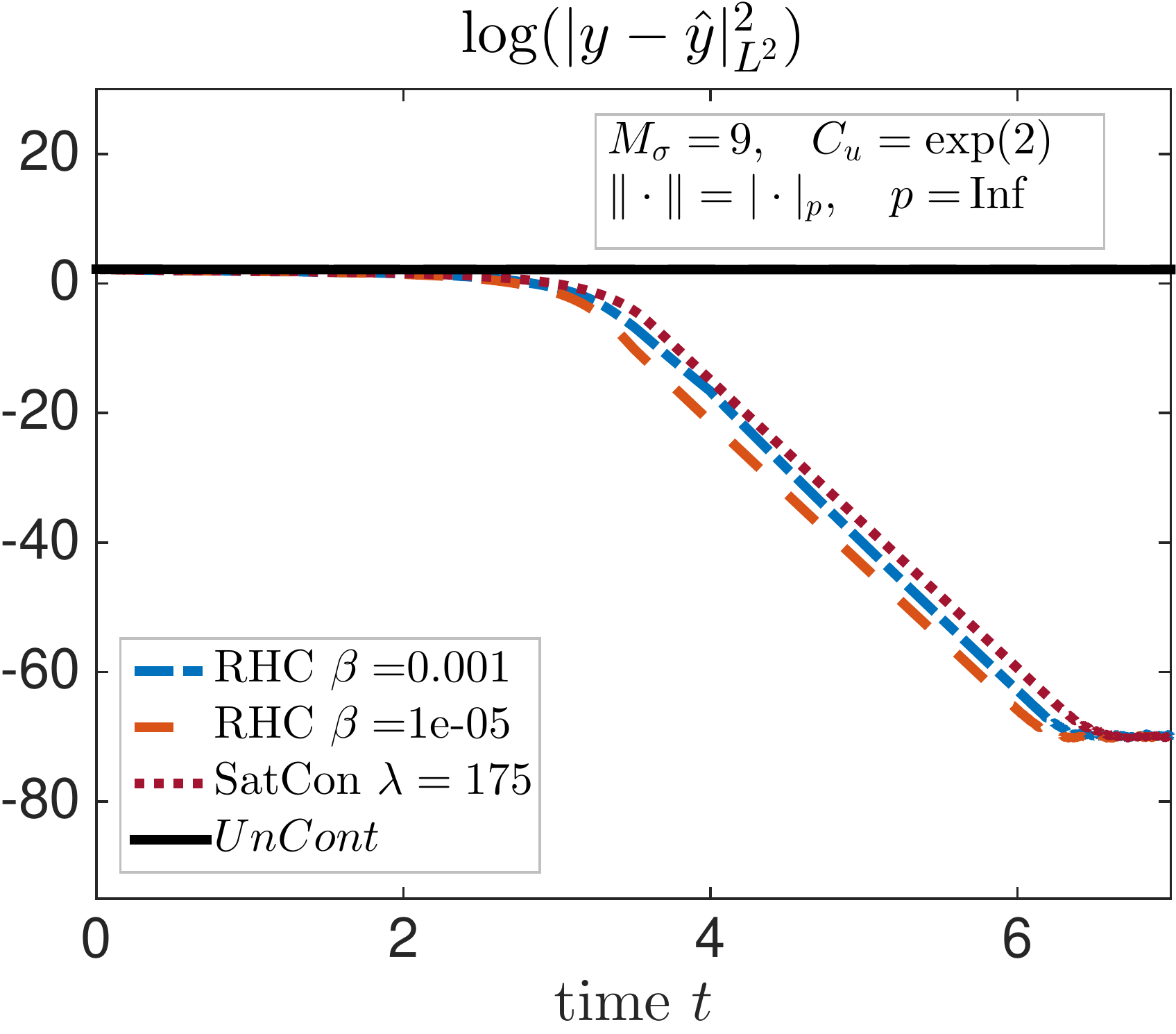}}
\quad
\subfigure%[]
{\includegraphics[width=0.45\textwidth]{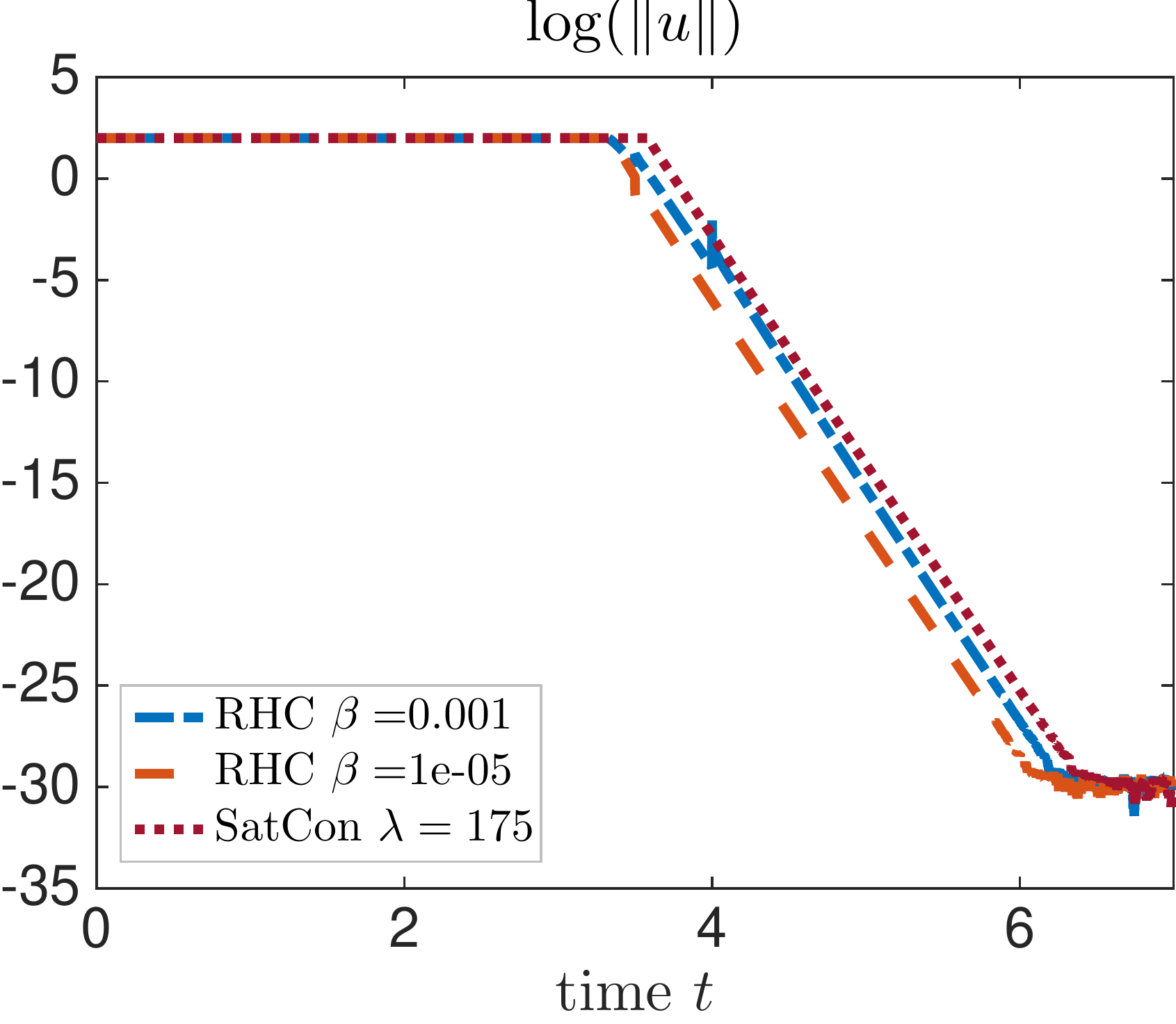}}
\caption{Norms of error and control. $(C_u,  T_{\infty}) =( e^{2},  7)$. (Ex.~\theexample)}
\label{Fig2}
\end{figure}

\begin{figure}[ht]
\centering
\subfigure%[]
{\includegraphics[width=0.45\textwidth]{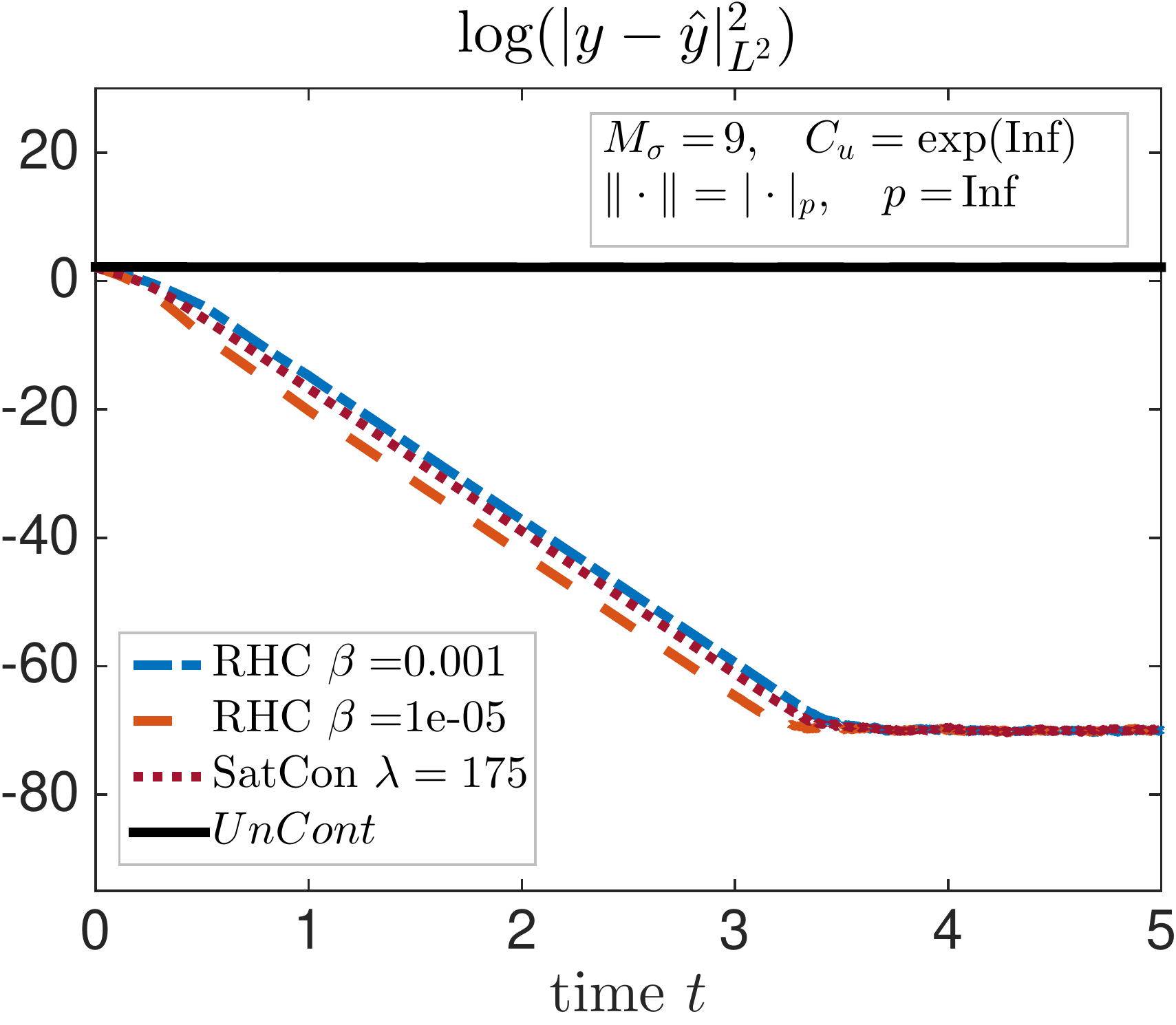}}
\quad
\subfigure%[]
{\includegraphics[width=0.45\textwidth]{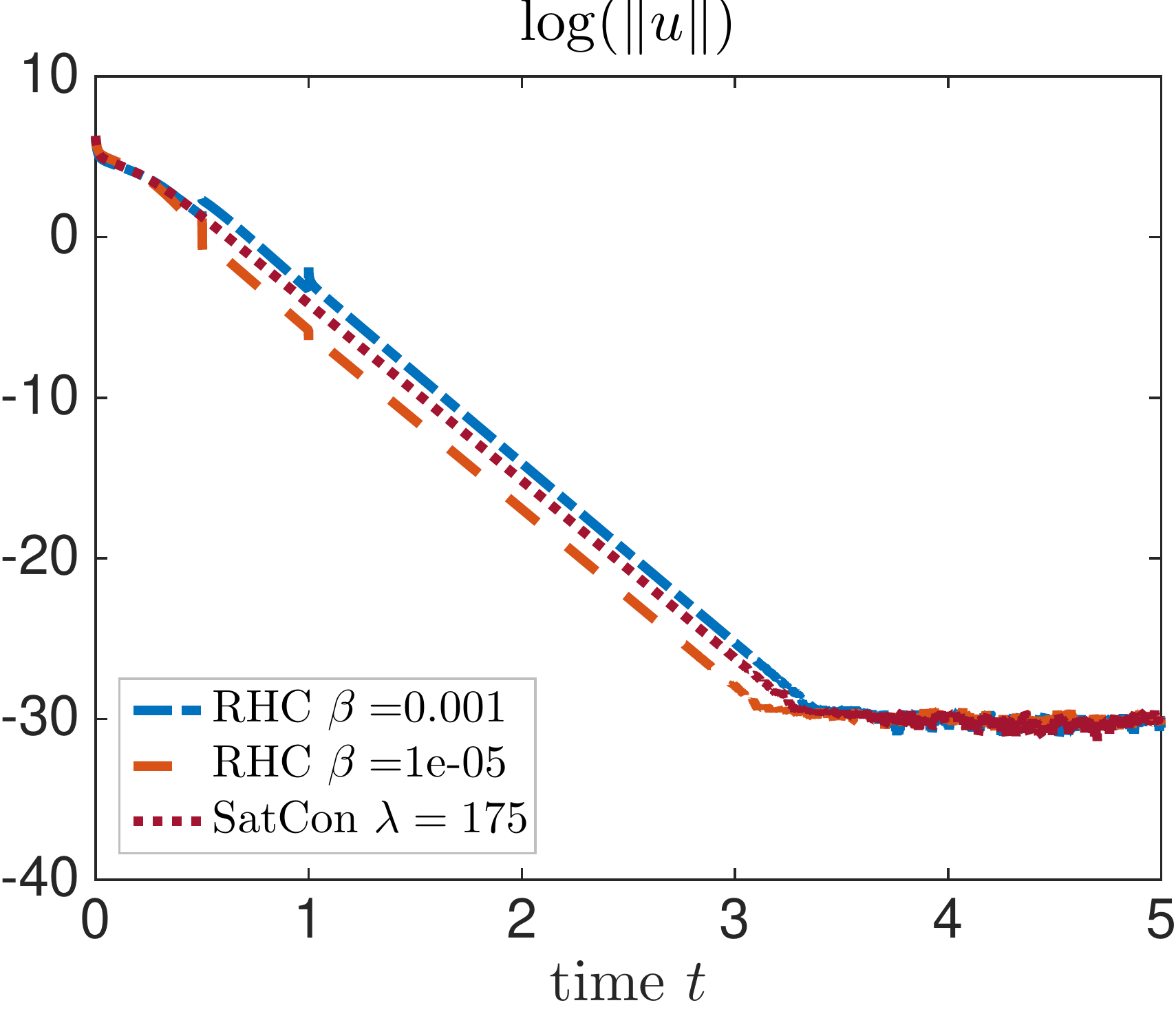}}
\caption{Norms of error and control. $(C_u,  T_{\infty}) =( e^{\infty},  5)$. (Ex.~\theexample)}
\label{FigInf}
\end{figure}

\begin{example}\label{Exa:rrhper}
Now we take as initial conditions
\[
\widehat y_0(x)= 10-20x_1x_2;
         \qquad y_0(x)= -10x_1+x_2.
\]
Their norms as well as the norm of the initial error $z_0=y_0-\widehat y_0$ are large when compared to those in Examples~\ref{Exa:eq23h0} and~\ref{Exa:eq31hper}. The external forcing is taken as~\eqref{h=per}.
In Fig.~\ref{Fig:haty_ic3}, we can see that the norm of the targeted trajectory~$\widehat y$ decreases fast as long as it is large  (cf. Sect.~\ref{sS:normdeclargeerror}). Asymptotically it exhibits again a periodic-like behavior near~$2$.
\begin{figure}[ht]
\centering
%\subfigure%[]
{\includegraphics[width=0.45\textwidth]{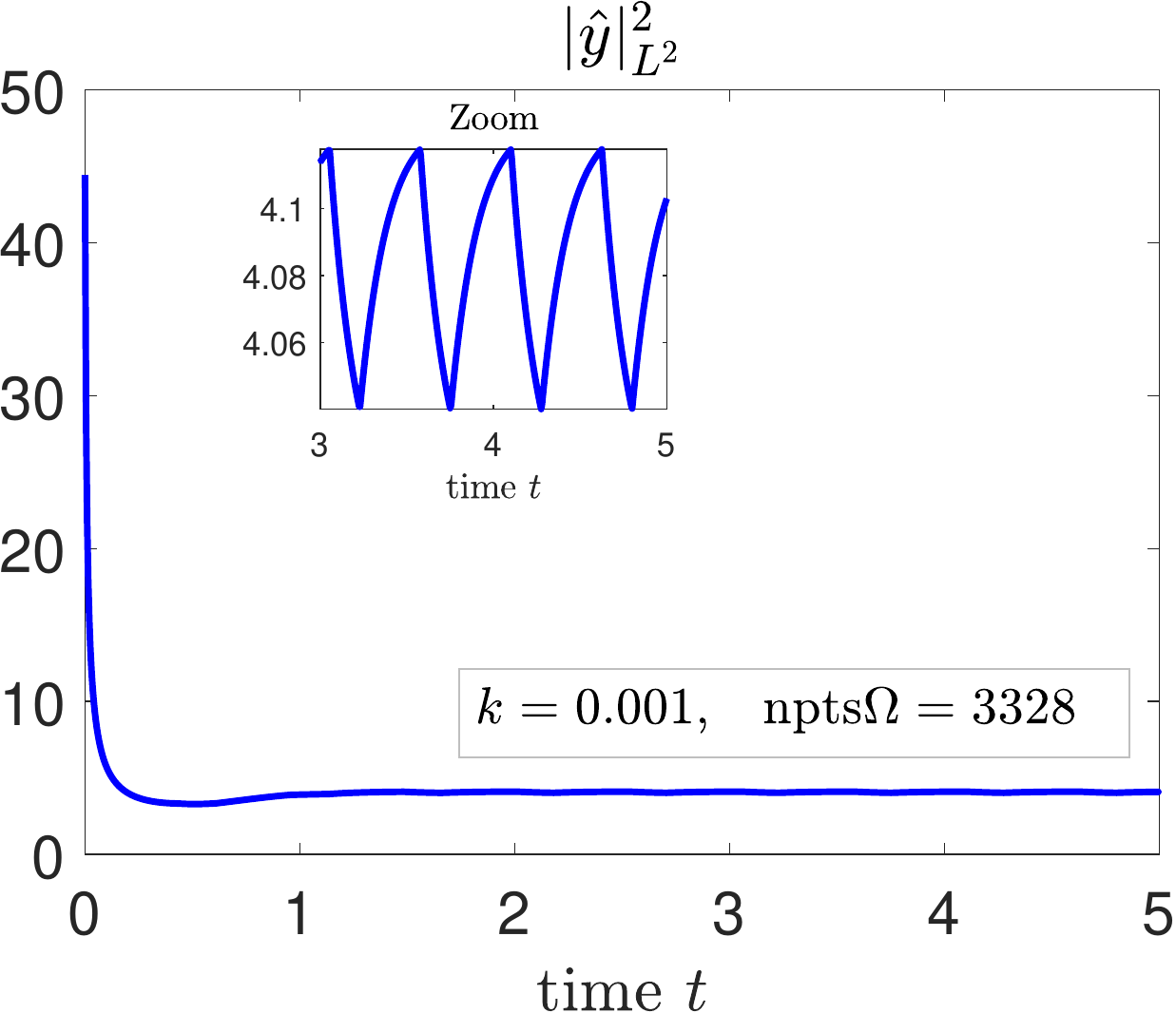}}
\caption{Norm of targeted trajectory. (Ex.~\theexample)}
\label{Fig:haty_ic3}
\end{figure}

Comparing Figs.~\ref{Fig:zCu_ic3CuLaInf} and~\ref{Fig:zCu_ic3CuSmInf}, we see again that we achieve the stability of the controlled error dynamics if, and only if, the magnitude~$C_u$ of the control constraint is large enough.
\begin{figure}[ht]
\centering
\subfigure%[]
{\includegraphics[width=0.45\textwidth]{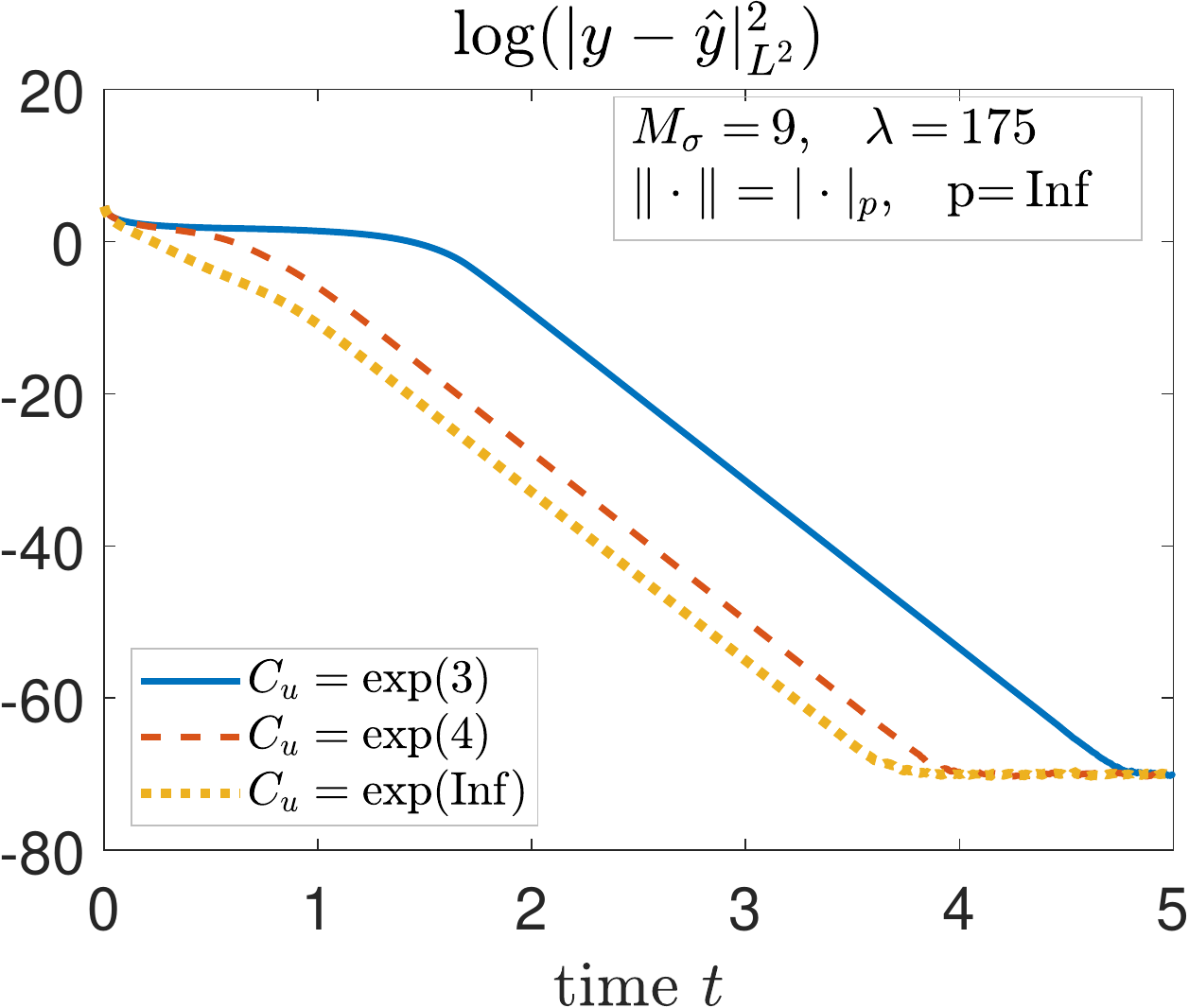}}
\quad
\subfigure%[]
{\includegraphics[width=0.45\textwidth]{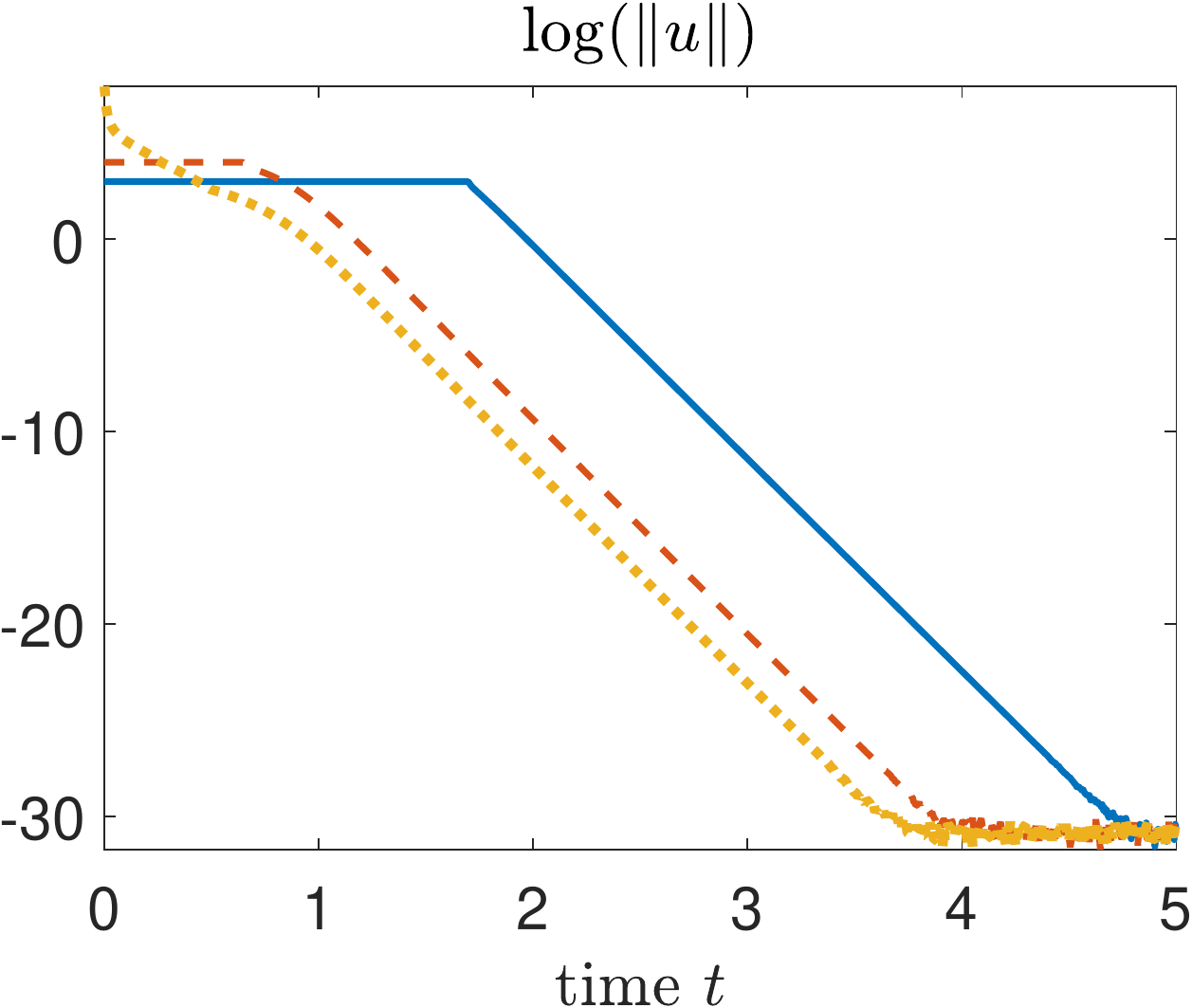}}
\caption{Norms of error and control. Large control constraint. (Ex.~\theexample)}
\label{Fig:zCu_ic3CuLaInf}
\end{figure}

\begin{figure}[ht]
\centering
\subfigure%[]
{\includegraphics[width=0.45\textwidth]{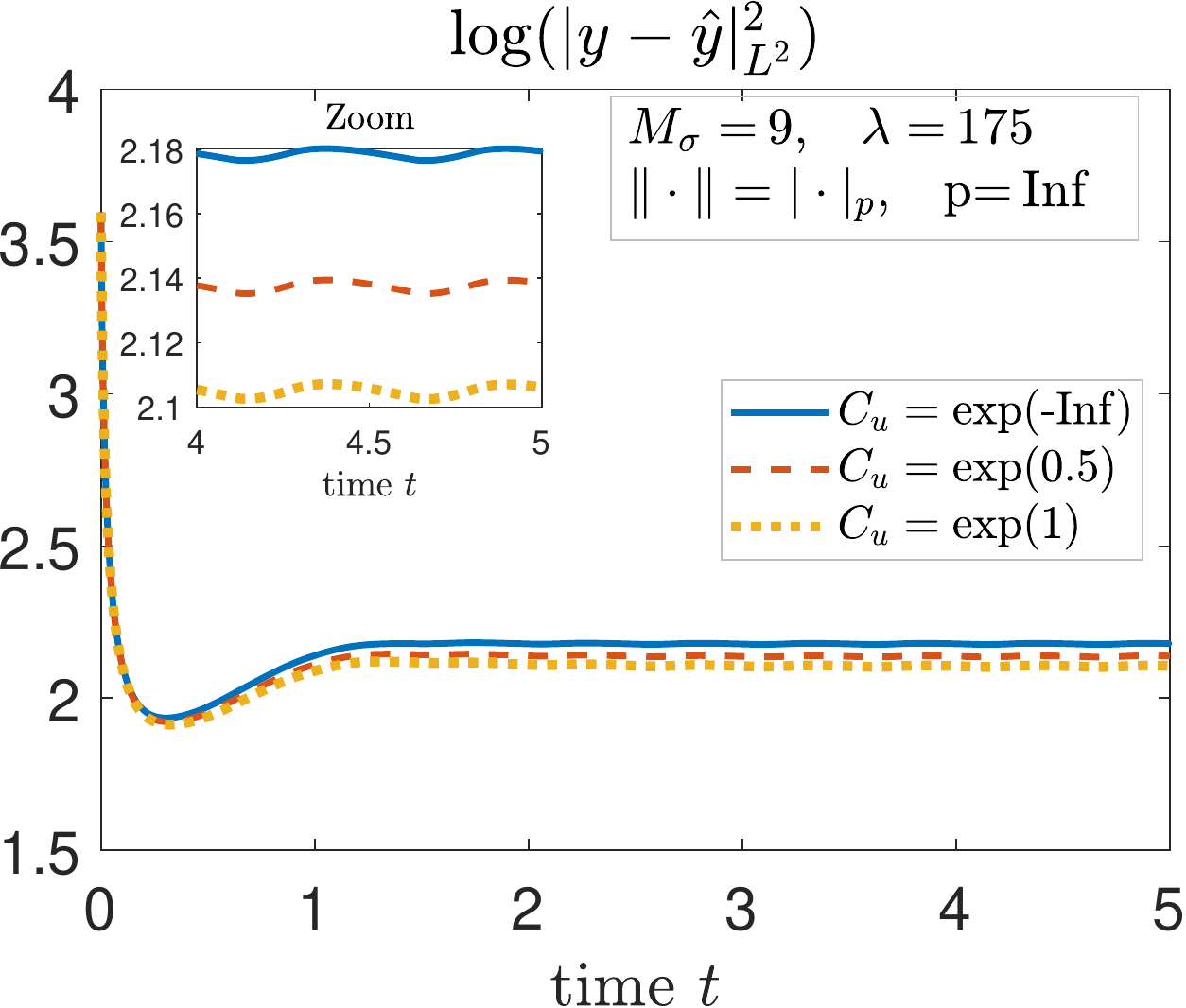}}
\quad
\subfigure%[]
{\includegraphics[width=0.45\textwidth]{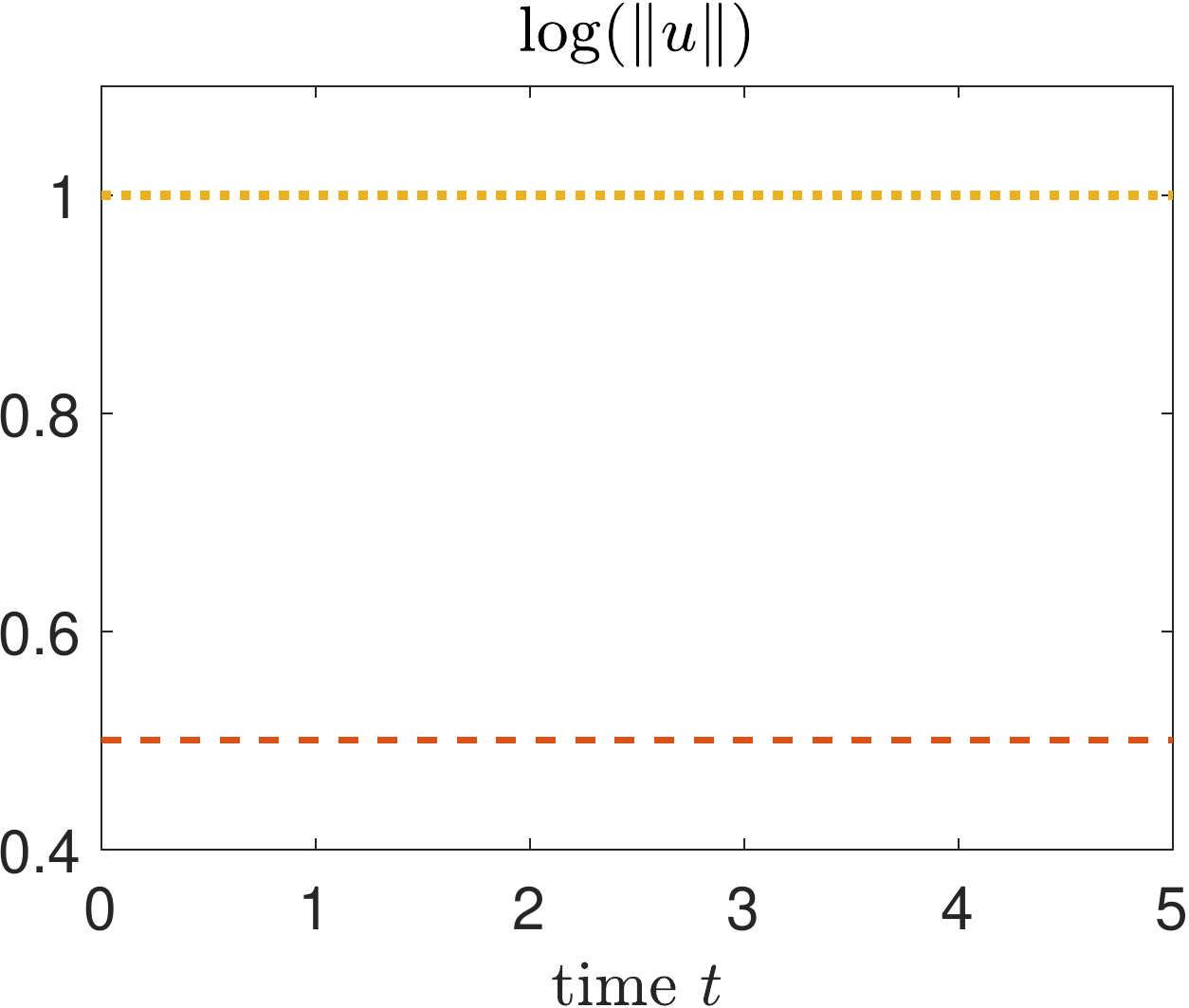}}
\caption{Norms of error and control. Small control constraint. (Ex.~\theexample)}
\label{Fig:zCu_ic3CuSmInf}
\end{figure}

\end{example}

\begin{example}\label{Exa:unc} We take the initial states and external force again as in Example~\ref{Exa:rrhper}.
But, instead of investigating the role played by the control constraint~$C_u$, we focus on  parameters~$M$ and~$\lambda$ determining the feedback law.%
 In Fig.~\ref{Fig:zCu_ic3CuLaInf} we see that by increasing the control constraint~$C_u$ we approach the behavior of the unconstrained limit case~$C_u=+\infty$.
To verify that an arbitrary exponential decrease rate~$\mu$ can be achieved with a sufficiently large~$C_u=C_u(\mu)$, it is enough to show that this rate can be achieved with the unconstrained feedback. Indeed, in Fig.~\ref{Fig:zMlamL} we confirm that by increasing~$M$ and~$\lambda$ we reach larger exponential decrease rates.
This is consistent with our theoretical result which says that we can achieve an arbitrary large exponential decrease rate~$\mu$. We recall the idea of the proof: for small time the exponential decrease rate  is guaranteed for large initial errors, with norm larger than a suitable constant~$D=D(\mu)$; for such~$D$ we choose~$M$ and $\lambda$ large enough to achieve such exponential decrease rate with the unconstrained control;
finally we choose~$C_u=C_u(D(\mu))$ large enough so that the constraint is inactive for an error norm smaller than~$D$.

\begin{figure}[ht]
\centering
\subfigure%[]
{\includegraphics[width=0.45\textwidth]{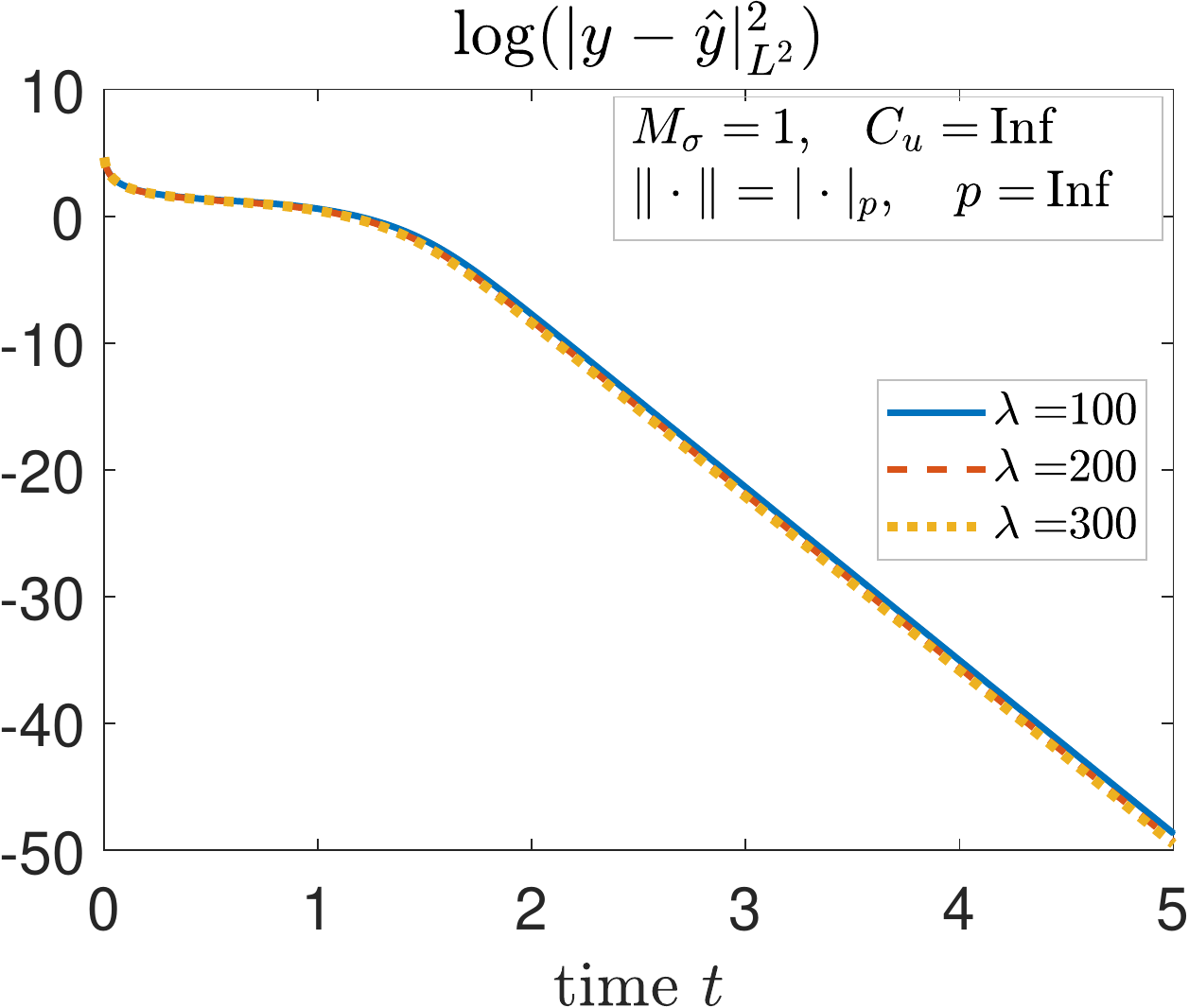}}
\quad
\subfigure%[]
{\includegraphics[width=0.45\textwidth]{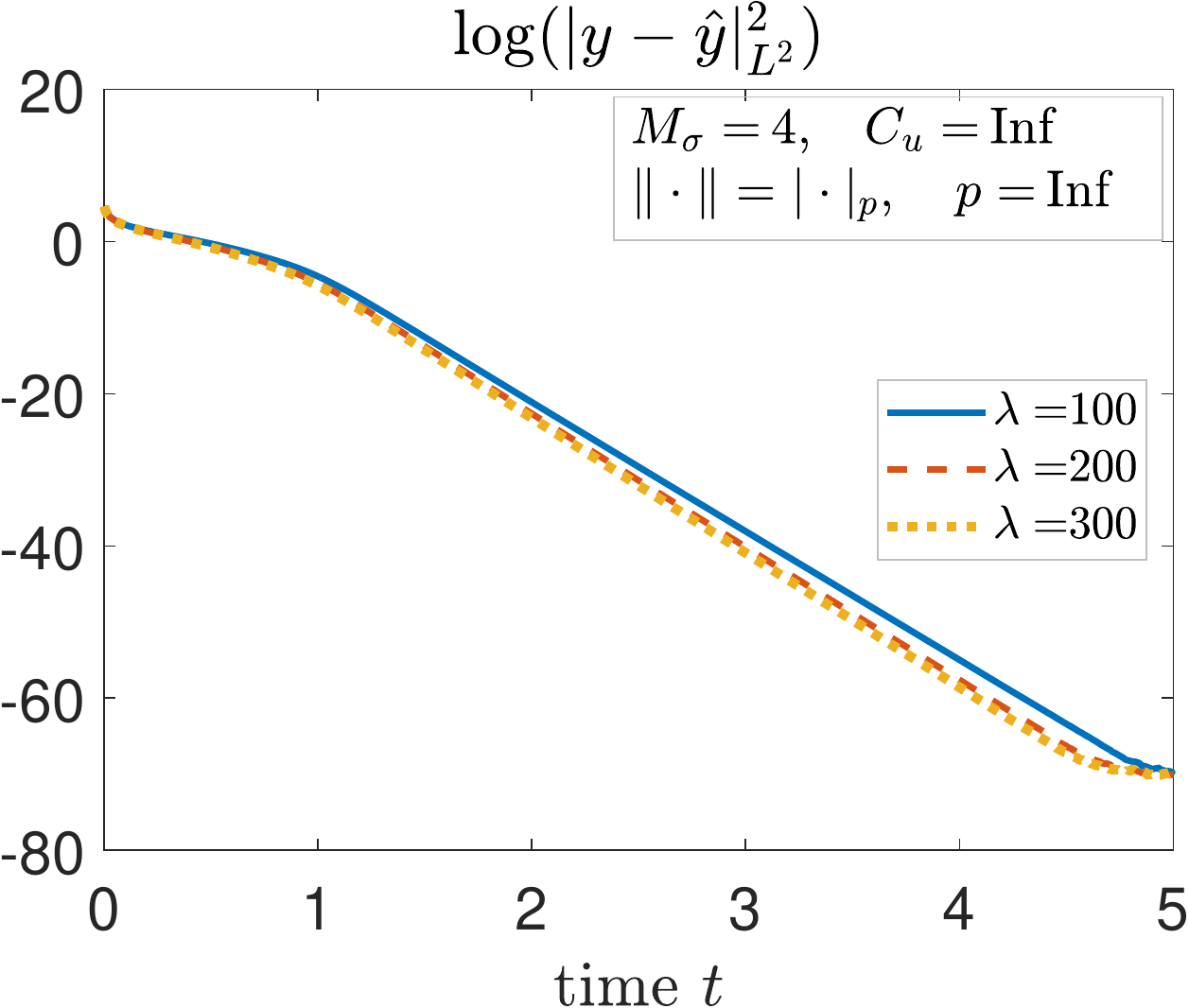}}
\\
\centering
\subfigure%[]
{\includegraphics[width=0.45\textwidth]{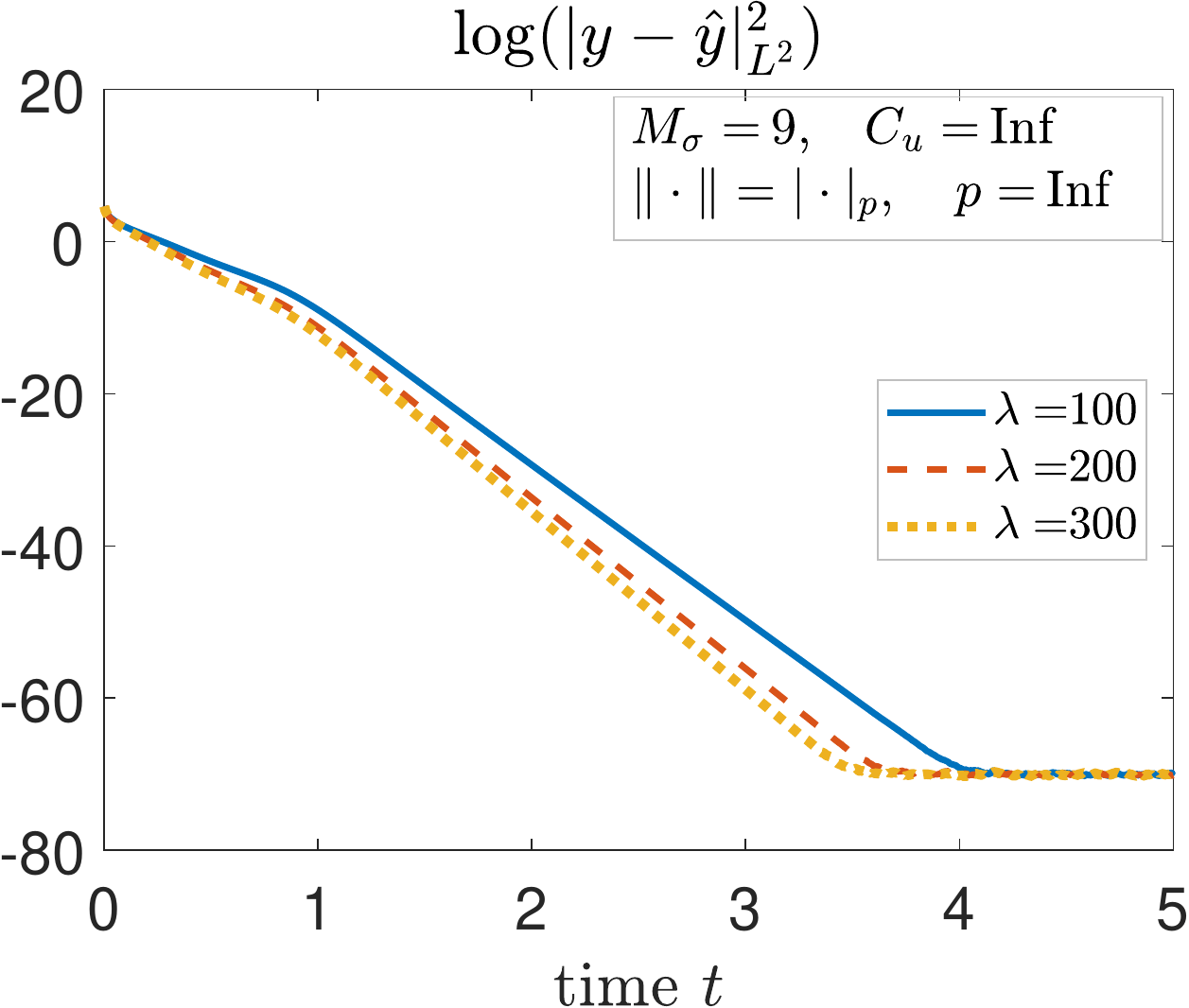}}
\quad
\subfigure%[]
{\includegraphics[width=0.45\textwidth]{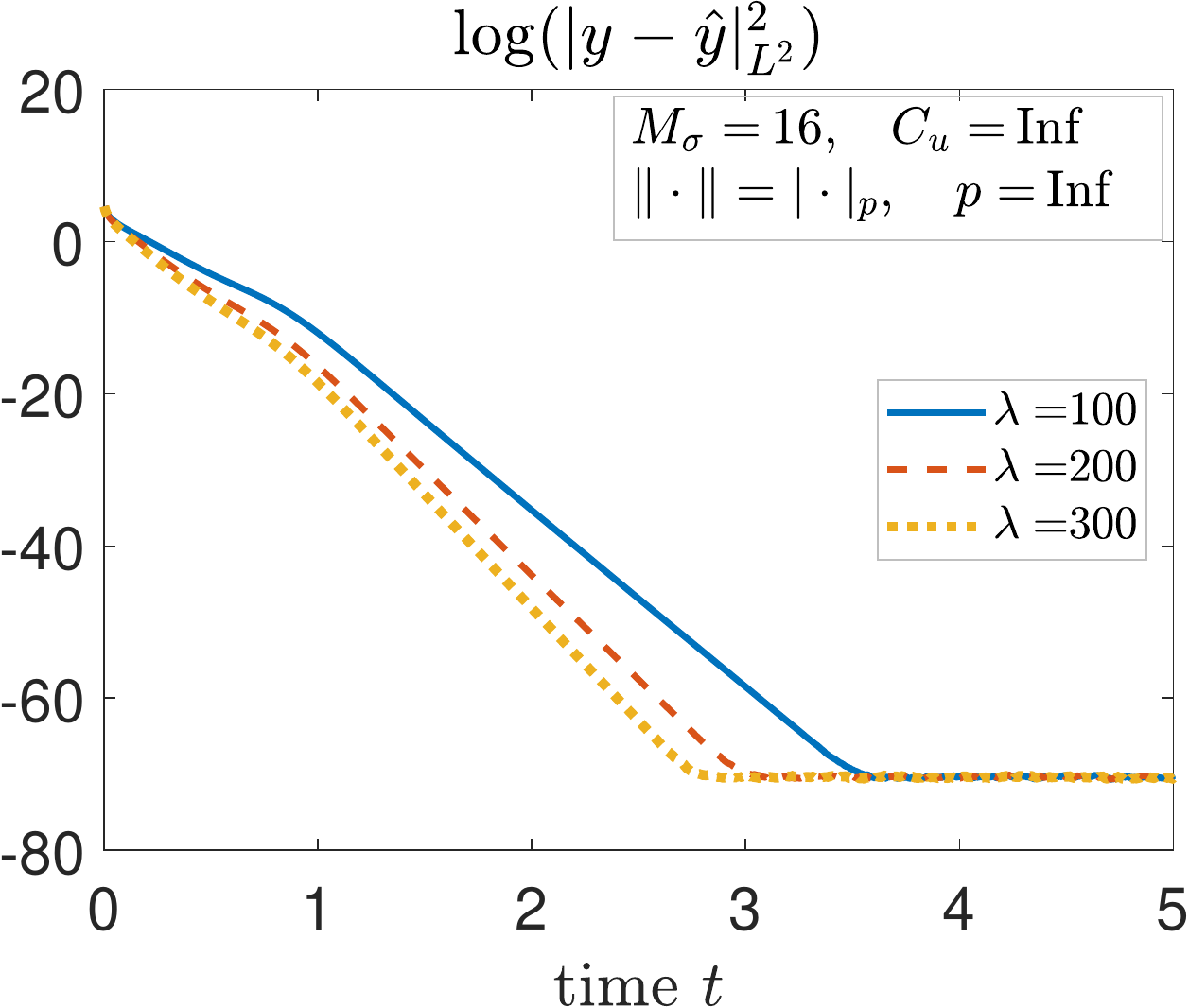}}
\caption{Norm of error for several  pairs~$(M_\sigma,\lambda)$. $C_u=+\infty$. (Ex.~\theexample)}
\label{Fig:zMlamL}
\end{figure}
\end{example}

\begin{remark}
In Fig.~\ref{Fig:zMlamL} we see that with a single actuator, $M_\sigma=1$,
we are able to achieve the exponential stability of the error dynamics, for a suitable rate~$\mu_1$. This does not follow from, nor contradicts, our theoretical results, from which we have that exponential stability with an apriori given rate~$\mu$ holds for large enough $M_\sigma$. This leads us to the following question: (when, if possible) can we guarantee/achieve exponential stability with a single actuator? This could be an interesting problem
for future research.
\end{remark}

Above, we have taken $\lambda\ge100$, which allow us to obtain large exponential decrease rates for the error norm. Fig.~\ref{Fig:zM16lam05} shows that, for ~$M_\sigma=16$ actuators, we can achieve exponential stability of the error dynamics with~$\lambda\ge5$. However, the figure also shows that $\lambda$ cannot be taken arbitrarily small, since the error dynamics is not exponentially stable with~$\lambda\le1$.
\begin{figure}[ht]
\centering
\subfigure%[]
{\includegraphics[width=0.45\textwidth]{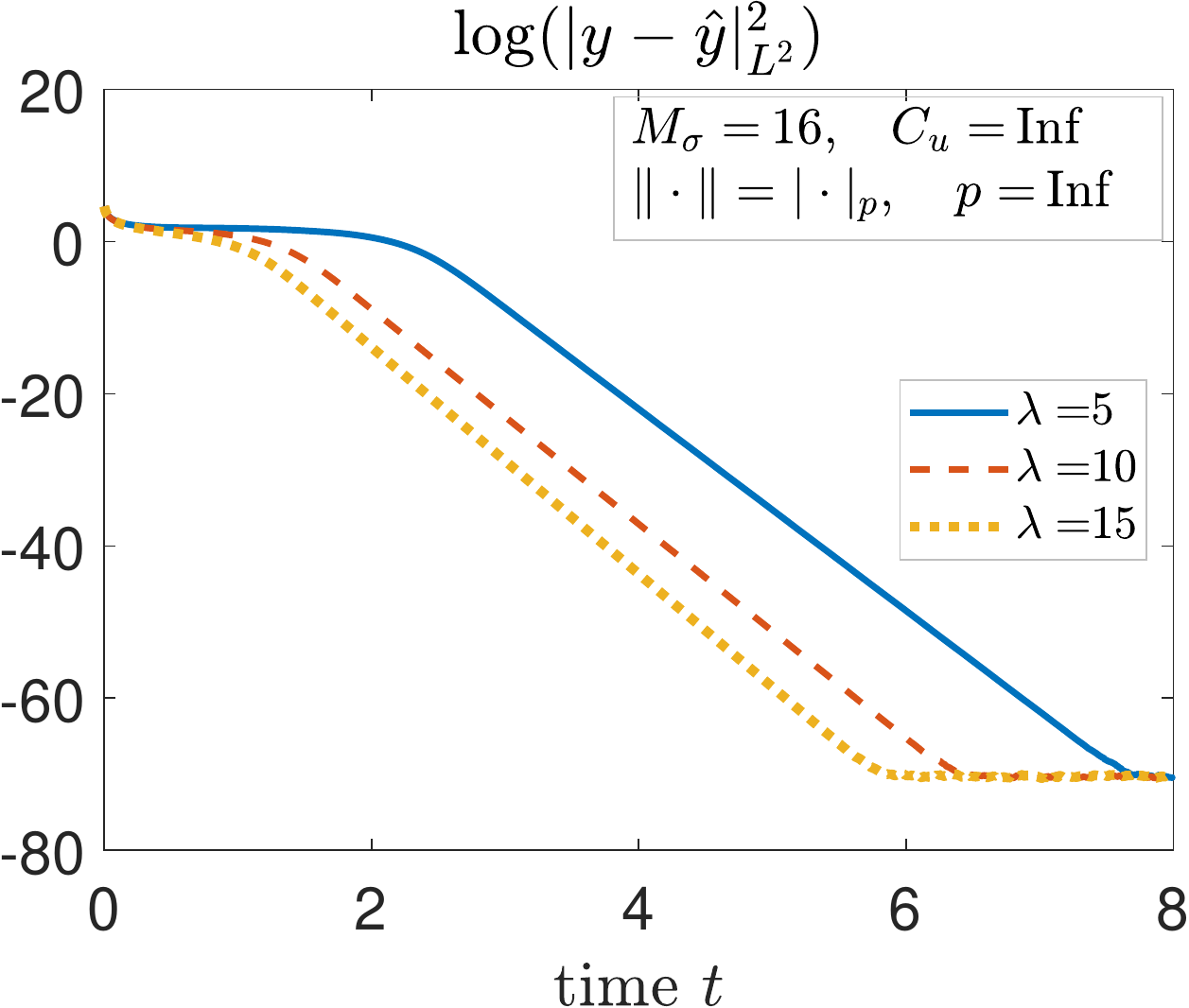}}
\quad
\subfigure%[]
{\includegraphics[width=0.45\textwidth]{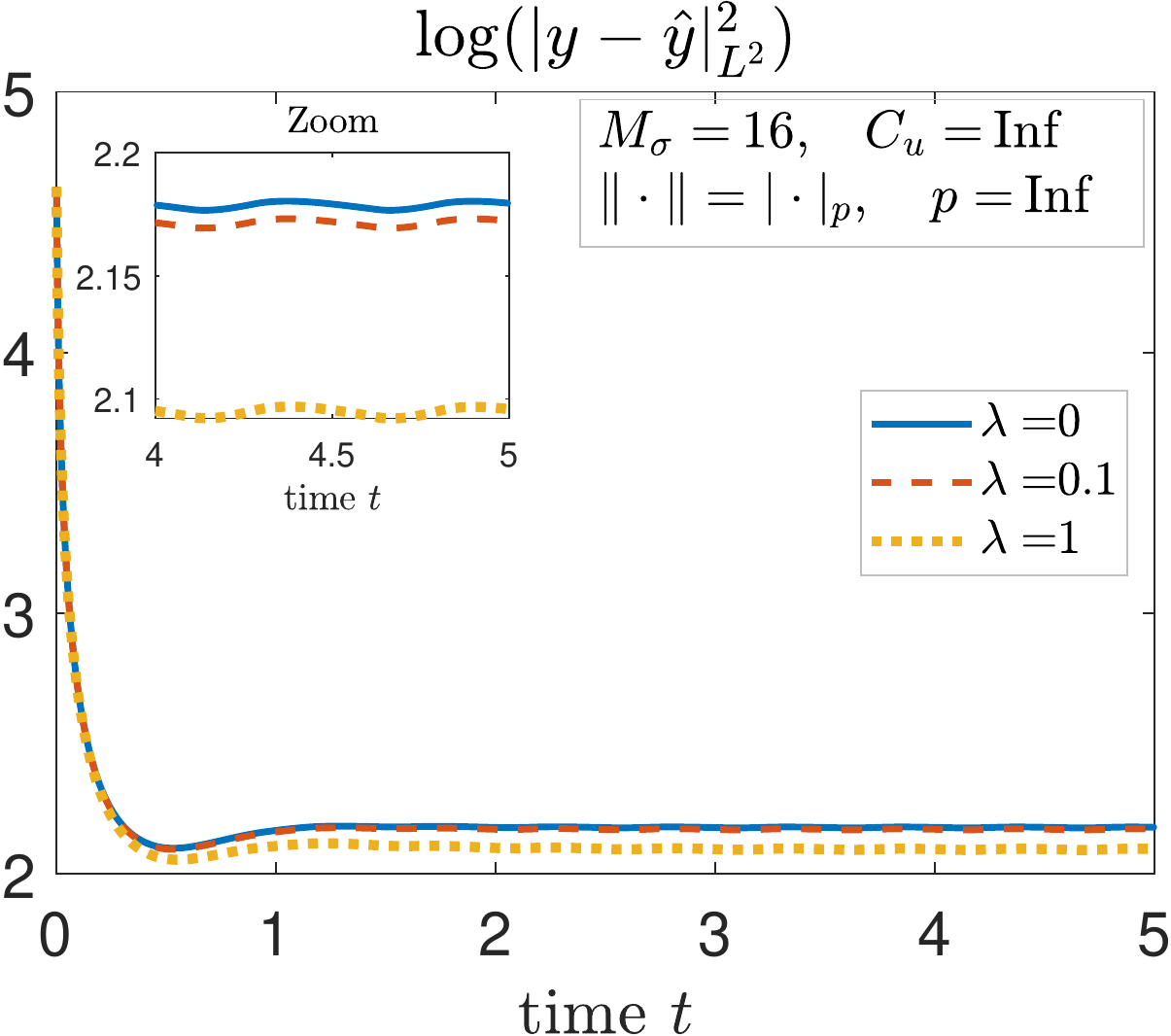}}
\caption{Norm of error for small~$\lambda$. $C_u=+\infty$. (Ex.~\theexample)}
\label{Fig:zM16lam05}
\end{figure}

%%%%%%%%%%%%%%%%%%%%%%%%
%%%%%%%%%%%%%%%%%%%%%%%%
\section{Concluding remarks}
We have shown the global stabilizability to trajectories for the Schl\"ogl model for chemical reactions, with a finite number of internal actuators and under control magnitude constraints. The number of actuators and the  magnitude of
the controls depend on the diffusion coefficient and on the
nonlinearity, but they are independent of the external forcing and
the targeted trajectory. The stabilizing controls can be taken as the
saturation of an explicit feedback operator. Its  performance  was compared to a receding horizon control approach.

An interesting topic  for  future work could the investigation of systems coupling the Schl\"ogl parabolic equation
with an ODE. These are systems of FitzHugh--Nagumo type
modeling phenomena in neurology and electrophysiology.

%\medskip
\appendix
% \addcontentsline{toc}{section}{Appendix}
% \begin{center}
% {\sc --- Appendix ---}
% \end{center}
\section*{Appendix}
%to have the effect of a new section we have to reset the counters
\setcounter{section}{1}%to restart with A (~1 in Alph)
\setcounter{theorem}{0} \setcounter{equation}{0}
\numberwithin{equation}{section}

\subsection{Proof of Theorem~\ref{T:ode1-intro-Cuinfty}}\label{Apx:proofT:ode1-intro-Cuinfty}
Let~$\mu>0$. Taking the feedback control $u_1=(r-2\mu)z$ and multiplying the dynamics in~\eqref{sys-y-u-ode1} by~$2z$, we find that~$\tfrac{\rmd}{\rmd t} z^2=-2\mu z^2$, from which we obtain~$z^2(t)=\rme^{-2\mu(t-s)}z^2(s)$.
\hfill\qed

%%%%%%%%%%%%%%%%%%%%%%%%
\subsection{Proof of Theorem~\ref{T:ode1-intro}}\label{Apx:proofT:ode1-intro}
Let~$r<0$. Let us fix an arbitrary~$C_u\in\bbR_+$ and an arbitrary control input function~$u=u_1$  satisfying~$\norm{u(t)}{\bbR}\le C_u$, for all~$t\ge0$.
Multiplying the dynamics in~\eqref{sys-y-u-ode1} by~$2z$, we find that
\begin{align}
\tfrac{\rmd}{\rmd t} z^2&=-2rz^2+2u_1z\ge-2rz^2-2C_u\norm{z}{\bbR}\notag\\
&=2\norm{z}{\bbR}(-r\norm{z}{\bbR}-C_u),\notag
\end{align}
which implies that
\begin{align}%
\tfrac{\rmd}{\rmd t} z^2(t)>0\quad\mbox{if}\quad \norm{z(t)}{\bbR}>\tfrac{C_u}{-r}.\notag
\end{align}
Thus,  if~$\norm{z(0)}{\bbR}>\tfrac{C_u}{-r}$ then~$\norm{z(t)}{\bbR}>\norm{z(0)}{\bbR}$ for all $t>0$.
\hfill\qed

\bigskip\noindent
{\bf Acknowledgments:} K. K.  and S. R.  were supported by ERC advanced grant 668998 (OCLOC) under the EU’s H2020 research program. S. R. also acknowledges partial support from Austrian Science Fund (FWF): P 33432-NBL.

\bibliography{ParabSaturCont}
\bibliographystyle{plainurldoi}

\end{document}